\newcounter{mylisti} \newcounter{mylistii}
\newcounter{nest}
\newcommand{\defaultlabel}{}
\newenvironment{mylist}[1]{%
  \addtocounter{nest}{1}
  \ifthenelse{\value{nest}=1}{%
    \renewcommand{\defaultlabel}{(\roman{mylisti})\hfill}}{%
    \renewcommand{\defaultlabel}{(\alph{mylistii})\hfill}}
  % \settowidth{\toowide}{(iii)}
  \begin{list}{\defaultlabel}{%
      \ifthenelse{\value{nest}=1}{\usecounter{mylisti}}{%
        \usecounter{mylistii}}
      
      %\addtolength{\topsep}{1ex}
      \addtolength{\itemsep}{0.5ex}
      \settowidth{\labelwidth}{#1}
    %  \ifthenelse{\lengthtest{\labelwidth > \toowide}}{%
     %   \setlength{\labelwidth}{\toowide}}{}
      \setlength{\leftmargin}{\labelwidth}
      \addtolength{\leftmargin}{\labelsep}}}{\addtocounter{nest}{-1}
\end{list}}
\newcommand{\be}{\ensuremath{\mathbb E}}
\newcommand{\bn}{\ensuremath{\mathbb N}}
\newcommand{\bp}{\ensuremath{\mathbb P}}
\newcommand{\br}{\ensuremath{\mathbb R}}
\newcommand{\cI}{\ensuremath{\mathcal I}}
\newcommand{\cJ}{\ensuremath{\mathcal J}}
\newcommand{\cK}{\ensuremath{\mathcal K}}
\newcommand{\cL}{\ensuremath{\mathcal L}}
\newcommand{\cP}{\ensuremath{\mathcal P}}
\newcommand{\cS}{\ensuremath{\mathcal S}}
\newcommand{\fB}{\ensuremath{\mathfrak{B}}}
\newcommand{\fH}{\ensuremath{\mathfrak{H}}}
\newcommand{\abs}[1]{\lvert #1\rvert}
\newcommand{\bigabs}[1]{\big\lvert #1\big\rvert}
\newcommand{\Bigabs}[1]{\Big\lvert #1\Big\rvert}
\newcommand{\conv}{\ensuremath{\mathrm{conv}}}
\newcommand{\diag}{\operatorname{diag}}
\newcommand{\fss}{\ensuremath{\mathcal F\mathcal S}}
\newcommand{\ip}[2]{\ensuremath{\langle #1,#2\rangle}}
\newcommand{\bigip}[2]{\ensuremath{\big\langle #1,#2\big\rangle}}
\newcommand{\norm}[1]{\lVert #1\rVert}
\newcommand{\bignorm}[1]{\big\lVert #1\big\rVert}
\newcommand{\Bignorm}[1]{\Big\lVert #1\Big\rVert}
\newcommand{\spn}{\ensuremath{\mathrm{span}}}
\newcommand{\cspn}{\ensuremath{\overline{\mathrm{span}}}}
\newcommand{\symdif}{\bigtriangleup}
\newcommand{\co}{\mathrm{c}_0}
\newcommand{\vare}{\varepsilon}
\newcommand{\varf}{\varphi}
\renewcommand{\geq}{\geqslant}
\renewcommand{\leq}{\leqslant}
\newcommand{\ds}{\displaystyle}
\newcommand{\eg}{\textit{e.g.,}\ }
\newcommand{\ie}{\textit{i.e.,}\ }
\newtheorem{thm}{Theorem}
\newtheorem{mainthm}{Theorem}
\newtheorem*{mainproblem*}{Problem}
\newtheorem{lem}[thm]{Lemma}
\newtheorem{prop}[thm]{Proposition}
\newtheorem{cor}[thm]{Corollary}
\newtheorem{problem}[thm]{Problem}
\theoremstyle{definition}
\theoremstyle{remark}
\newtheorem*{rem}{Remark}
\begin{document}

\title{Closed ideals of operators between the classical sequence
  spaces}

\author{D.~Freeman}
\address{Department of Mathematics and Statistics, St Louis
  University, St Louis, MO 63103  USA}
\email{dfreema7@slu.edu}

\author{Th.~Schlumprecht}
\address{Department of Mathematics, Texas A\&M University, College
  Station, TX 77843, USA and Faculty of Electrical Engineering, Czech
  Technical University in Prague,  Zikova 4, 166 27, Prague}
\email{schlump@math.tamu.edu}

\author{A.~Zs\'ak}
\address{Peterhouse, Cambridge, CB2 1RD, UK}
\email{a.zsak@dpmms.cam.ac.uk}

\date{23 September 2016}

\thanks{The first author was supported by grant 353293 from the
  Simon's Foundation. The second author's research was supported by
  NSF grant DMS1464713. The first and third authors were supported by
  the 2016 Workshop in  Analysis and Probability at Texas A\&M
  University.}
\keywords{Operator ideals, RIP matrices, factorizing operators}
\subjclass[2010]{}

\begin{abstract}
  We prove that the spaces $\cL(\ell_p,\co)$,
  $\cL(\ell_p,\ell_\infty)$
  and $\cL(\ell_1,\ell_q)$ of operators with $1<p,q<\infty$ have
  continuum many closed ideals. This extends and improves earlier
  works by Schlumprecht and Zs\'ak~\cite{sz:15}, by
  Wallis~\cite{wallis:15} and by Sirotkin and
  Wallis~\cite{sirotkin-wallis:16}. Several open problems remain. Key
  to our construction of closed ideals are matrices with the
  Restricted Isometry Property that come from Compressed Sensing.
\end{abstract}

\maketitle

\section{Introduction}

Closed ideals of operators between Banach spaces is a very old
subject going back to at least the 1940s when Calkin proved that the
compact operators are the only non-trivial, proper closed ideal of
the algebra $\cL(\ell_2)$ of all bounded linear maps on Hilbert
space~\cite{calkin:41}. Two decades later Gohberg, Markus and
Fel{\cprime}dman~\cite{goh-mar-fel:60} showed that the same result
holds for the algebras $\cL(\ell_p)$, $1\leq p<\infty$, and
$\cL(\co)$. It is perhaps surprising that the situation is vastly more
complicated for the spaces $\ell_p\oplus \ell_q$ and $\ell_p\oplus
\co$, $1\leq p<q<\infty$. We refer the reader to Pietsch's book
`Operator Ideals'~\cite{pietsch:78}*{Chapter 5} and to sections~1
and~2.3 of~\cite{sz:15} for a detailed history of the study of
closed ideals on these spaces. Here we shall be fairly brief and
concentrate on developments that best place our new results in
context.

Let $X=\ell_p$ and let $Y$ be either $\ell_q$ or $\co$, where $1\leq
p<q<\infty$. We recall~\cite{volkmann:76} that $\cL(X\oplus Y)$ has
exactly two maximal ideals: the closures of the ideals of operators
factoring through $X$ and $Y$, respectively. Moreover, the set of all
non-maximal, proper closed ideals of $\cL(X\oplus Y)$ is in a
one-to-one, inclusion-preserving correspondence with the set of
all closed ideals of $\cL(X,Y)$ (see~\cite{pietsch:78}*{Theorem 5.3.2}
or Section~2 of~\cite{schlump:12}). Here an ideal of $\cL(X,Y)$ is a
subspace $\cJ$ of $\cL(X,Y)$ with the \emph{ideal property}:
$ATB\in\cJ$ whenever $A\in\cL(Y), T\in\cJ$ and $B\in\cL(X)$. In his
book, Pietsch raised the problem whether $\cL(X,Y)$ has infinitely many
closed ideals in the case $X=\ell_p$ and $Y=\ell_q$ with $1\leq
p<q<\infty$. This problem remained open for nearly forty years.
It is straightforward that the smallest non-trivial closed ideal is
the ideal of compact operators and that every other non-trivial closed
ideal must contain the closed ideal generated by the formal inclusion
map $I_{X,Y}\colon X\to Y$. However, it is not even obvious if there
are any other non-trivial, proper closed ideals besides the compact
operators. The first results in this direction are due to
Milman~\cite{milman:70} who first proved that $I_{X,Y}$ is
finitely strictly singular, and then exhibited in the case
$1<p<q<\infty$ an operator in $\cL(\ell_p,\ell_q)$ that is not
finitely strictly singular. (Definitions will be given in
Section~\ref{sec:main-results} below.) More recently, further closed
ideals, but still only finitely many, were found by Sari,
Tomczak-Jaegermann, Troitsky and
Schlumprecht~\cite{sari-schlump-tomczak-troitsky:07} and
by Schlumprecht~\cite{schlump:12}.

Finally, Schlumprecht and Zs\'ak found a positive answer to
Pietsch's question in the reflexive range $1<p<q<\infty$. In fact,
they showed~\cite{sz:15} that in that range, $\cL(\ell_p,\ell_q)$
contains continuum many closed ideals which with respect to inclusion
are order-isomorphic to $\br$. Shortly after this, Wallis
observed~\cite{wallis:15} that the techniques of~\cite{sz:15} extend
to prove the same result for $\cL(\ell_p,\co)$ in the range $1<p<2$,
and for $\cL(\ell_1,\ell_q)$ in the range $2<q<\infty$. The answer to
Pietsch's question was completed by Sirotkin and
Wallis~\cite{sirotkin-wallis:16} who proved that there are
uncountably many closed ideals in $\cL(\ell_1,\ell_q)$ for
$1<q\leq\infty$ as well as in $\cL(\ell_1,\co)$ and in
$\cL(\ell_p,\ell_\infty)$ for $1\leq p<\infty$. The techniques used
in~\cite{sz:15}
and~\cite{sirotkin-wallis:16} are completely different and we shall
have more to say about this in the final section of this paper. The
method of Sirotkin and Wallis produces chains of closed ideals of
size~$\omega_1$, the first uncountable ordinal. Moreover, their method
can not tackle (for good reasons as we shall see later) the question
whether $\cL(\ell_p,\co)$ has infinitely many closed ideals in the
range $2\leq p<\infty$. The main focus of this paper is \emph{this
}remaining
open problem regarding closed ideals of operators between classical
sequence spaces. Our main result is a solution of this problem as well
as an extension of the result of Wallis for $1<p<2$.

\begin{mainthm}
  \label{mainthm:l_p-c_0}
  For all $1<p<\infty$ there are continuum many closed ideals in
  $\cL(\ell_p,\co)$.
\end{mainthm}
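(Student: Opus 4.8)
The plan is to construct, for each parameter $\alpha$ in an uncountable linearly ordered set (so as to recover, as in~\cite{sz:15}, a chain of closed ideals of size $\continuum$), an operator $T_\alpha\in\cL(\ell_p,\co)$, and to show that the closed ideals $\cJ_\alpha$ generated by the $T_\alpha$ are pairwise distinct. The operators are built from sequences of matrices with the Restricted Isometry Property: along a sequence of pairwise disjoint finite coordinate blocks one places suitably normalised copies $\lambda_nA_n$ of $m_n\times N_n$ matrices $A_n$ that are $(k_n,\delta)$-RIP, each mapping the block $\ell_p^{N_n}\subseteq\ell_p$ into the block $\ell_\infty^{m_n}\subseteq\co$; the three scales $N_n\gg k_n$, $m_n$ and their rates of growth with $n$ are tied to $\alpha$, and the normalising constants $\lambda_n$ are chosen to keep the block norms bounded, which makes $T_\alpha$ bounded and non-compact (a suitable unit vector in each domain block has $T_\alpha$-image bounded away from $0$, and these images lie in disjoint blocks of $\co$, hence stay mutually far apart). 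The essential feature supplied by RIP is that $T_\alpha$, \emph{restricted to its $k_n$-sparse inputs}, behaves like a controlled Euclidean embedding — uniformly over all $k_n$-element coordinate sets at once, which is exactly the Restricted Isometry Property and is not available by soft arguments; this is what opens up the range $p\ge2$, left untouched by all previous methods.

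The separation is carried out by attaching to each operator $S\in\cL(\ell_p,\co)$ a scale-indexed quantity $g_k(S)$ of $s$-number type, measuring how ``$\ell_2$-rich'' the action of $S$ can be on a $k$-dimensional coordinate block, arranged so that it behaves monotonically under pre- and post-composition (up to the operator norms) and sub-additively under sums. The RIP of $A_n$ yields a lower bound for $g_{k_n}(T_\alpha)$ at the critical scales. On the other side, using that each block of $T_{\alpha'}$ is a bounded $\ell_p\to\ell_\infty$ map, together with an estimate on how much Euclidean mass such a block can concentrate on a small set of output coordinates, one obtains an upper bound on $g_k$ for the operators in $\cJ_{\alpha'}$, captured by a rate $\phi_{\alpha'}$. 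The parameters are calibrated so that, for $\alpha>\alpha'$, the lower bound for $T_\alpha$ eventually exceeds $\phi_{\alpha'}$; hence $T_\alpha\notin\cJ_{\alpha'}$, the assignment $\alpha\mapsto\cJ_\alpha$ is injective, and $\cL(\ell_p,\co)$ has continuum many closed ideals.

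The heart of the matter — and where I expect the real work to lie — is the upper bound on the gauge over $\cJ_{\alpha'}$, which is genuinely more delicate when $p\ge2$. Two difficulties interlock. First, the bound must persist after passing to the norm-closure of the finitely generated ideal: since the gauge tends to $0$, a naive limiting argument loses control of the constants, so one needs either a more robust reformulation of the invariant or a quantitative handle on the approximating factorisations (the danger being that a composition $\sum_iA_iT_{\alpha'}B_i$ synthesises a rich action on a $k$-dimensional block ``for free'' by scattering it across many range blocks of $T_{\alpha'}$ — this has to be excluded using the $\ell_p\to\ell_\infty$ boundedness of $T_{\alpha'}$ together with the RIP lower bounds of its own blocks). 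Second, the inequalities relating $\norm{\cdot}_p$, $\norm{\cdot}_2$ and $\norm{\cdot}_\infty$ on a $k$-element coordinate set cost powers of $k$ whenever $p\ne2$, so the sequences $N_n,k_n,m_n$ — and thus the RIP matrices employed, which must realise a prescribed trade-off between $m_n$ and $k_n$ rather than merely the textbook $m_n\asymp k_n\log(N_n/k_n)$ — have to be chosen so that these distortions are absorbed and a real gap survives between the rates $\phi_\alpha$ for distinct $\alpha$. The remaining verifications — that each $T_\alpha$ is bounded and non-compact, that $\cJ_\alpha$ is proper, and that the construction can be arranged to give a chain and, specialised to $1<p<2$, Wallis's theorem~\cite{wallis:15} — are routine by comparison.
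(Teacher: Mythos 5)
Your overall plan---build block-diagonal operators from RIP matrices and drive a wedge between the closed ideals they generate---is precisely the idea of the paper, and you have correctly isolated the two hard points: surviving the norm-closure of the generated ideal, and absorbing the $\norm{\cdot}_p$/$\norm{\cdot}_2$/$\norm{\cdot}_\infty$ distortions when $p\neq 2$. But as sketched, the construction has the RIP matrix acting in the wrong direction, and the claimed ``controlled Euclidean embedding'' fails with an $\ell_\infty$ target. An $m_n\times N_n$ RIP matrix $A_n$ gives $\norm{A_nx}_{\ell_2^{m_n}}\approx\norm{x}_{\ell_2^{N_n}}$ on $k_n$-sparse $x$; if you then read the output in $\ell_\infty^{m_n}$, you only get $\norm{A_nx}_{\ell_\infty^{m_n}}\leq\norm{A_nx}_{\ell_2^{m_n}}$ and there is no lower bound --- the $\ell_\infty^{m_n}$ norm of a generic image vector is smaller by a factor of roughly $m_n^{-1/2}$, exactly the factor your normalisation $\lambda_n$ would have to pay for $\lambda_nA_n$ to be bounded $\ell_p^{N_n}\to\ell_\infty^{m_n}$. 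So your $T_\alpha$ does not act as an $\ell_2$-rich map on $k_n$-sparse inputs, and the lower bound you want for $g_{k_n}(T_\alpha)$ does not exist. The paper's blocks go the other way: the maps are $T_n\colon\ell_2^{u_n}\to\ell_\infty^{v_n}$, $x\mapsto(\ip{x}{g_i^{(n)}})_i$, where $g_1^{(n)},\dots,g_{v_n}^{(n)}\in\ell_2^{u_n}$ are the (normalised) \emph{columns} of an $u_n\times v_n$ RIP matrix with $u_n\ll v_n$. Here the target $\ell_\infty$-norm is exactly right, because $\norm{T_ng_i^{(n)}}_\infty\geq\abs{\ip{g_i^{(n)}}{g_i^{(n)}}}=1$, so $T_n$ preserves the norm of $v_n$ vectors in a $u_n$-dimensional space with $v_n\gg u_n$ --- the whole game is this near-orthonormality of the column family (\eqref{eq:almost-on}, \eqref{eq:besselian}), not the signal-recovery reading of RIP.

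The second, larger gap is the one you flag yourself but do not fill: the separating invariant. You posit an $s$-number-type gauge $g_k$ and two competing rates, but you give neither a definition of $g_k$ that is norm-continuous (so that bounds on $\sum_iA_iT_{\alpha'}B_i$ pass to the closure of the ideal) nor an argument that rules out the scattering phenomenon you correctly identify as the danger. The paper does not use a gauge at all: it produces explicit norming functionals $\Phi_m(S)=\tfrac1{v_m}\sum_i\ip{Sg_i^{(m)}}{e_i^{(m)}}$ with $\Phi_m(T_M)=1$, and then shows $\Phi_m(AT_NB)\to 0$ along $m\in M\setminus N$; a weak${}^*$-cluster point of $(\Phi_m)$ then separates $T_M$ from $\cJ^{T_N}$ outright, with no convergence-of-constants issue. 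The heart of that computation is Lemma~\ref{lem:factoring-through-formal-id}, an approximate factorisation of $DB$ (a diagonal RIP operator after any norm-one $B\colon\ell_2^{u_m}\to U$) through a formal identity $\ell_2^r\to\ell_\infty^r$ with $r\leq s_m$; this is exactly the quantitative handle that defeats scattering, and it is what is missing from your sketch. Finally, note the parametrisation in the paper is by infinite subsets $M\subset\bn$ (via the Boolean algebra $\cP\bn/\text{fin}$), not by a continuum of rates; besides being cleaner to count, this makes the join-preservation statement in Theorem~\ref{thm:structure-of-ideals} essentially free, whereas with a rate parameter such lattice information would be much harder to extract.
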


Our techniques are somewhat similar to those used
in~\cite{sz:15}. However, instead of using independent sequences of
3-valued, symmetric random variables spanning Rosenthal spaces, here
we are going to rely on matrices with the Restricted Isometry Property
(RIP for short) to
generate ideals. Their appearance in this problem was somewhat of a
surprise to us. In fact, we shall prove considerably more by showing
that the distributive lattice of closed ideals of $\cL(\ell_p,\co)$
has a rich structure (see Theorem~\ref{thm:structure-of-ideals}
below). From Theorem~\ref{mainthm:l_p-c_0} it is a short step to
extend the results of Sirotkin and Wallis as follows.

\begin{mainthm}
  \label{mainthm:l_1-l_q}
  For all $1<p<\infty$ there are continuum many closed ideals in
  $\cL(\ell_p,\ell_\infty)$ and $\cL(\ell_1,\ell_p)$.
\end{mainthm}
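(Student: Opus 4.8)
The plan is to reduce both assertions to the single statement that $\cL(\ell_p,\ell_\infty)$ has at least $\continuum$ closed ideals for every $1<p<\infty$, and to derive that statement from Theorem~\ref{mainthm:l_p-c_0}. First I would dispose of the spaces $\cL(\ell_1,\ell_p)$ by duality. The adjoint map $\Theta\colon\cL(\ell_1,\ell_p)\to\cL(\ell_{p'},\ell_\infty)$, $S\mapsto S^*$ (using $\ell_p^*=\ell_{p'}$ and $\ell_1^*=\ell_\infty$), is a linear isometry, and it is onto: for $R\in\cL(\ell_{p'},\ell_\infty)=\cL(\ell_{p'},\ell_1^*)$ reflexivity of $\ell_{p'}$ produces a pre-adjoint $S:=R^*\restrict\ell_1\in\cL(\ell_1,\ell_p)$ with $\Theta(S)=R$. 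Its inverse carries closed ideals to closed ideals: if $\cI$ is a closed ideal of $\cL(\ell_{p'},\ell_\infty)$, $R\in\cI$, $S=\Theta^{-1}(R)$, $C\in\cL(\ell_p)$ and $D\in\cL(\ell_1)$, then $\Theta(CSD)=(CSD)^*=D^*RC^*$ with $D^*\in\cL(\ell_\infty)$ and $C^*\in\cL(\ell_{p'})$, so $D^*RC^*\in\cI$ and hence $CSD\in\Theta^{-1}(\cI)$. Since $\Theta^{-1}$ is injective, $\cL(\ell_1,\ell_p)$ has at least as many closed ideals as $\cL(\ell_{p'},\ell_\infty)$; as $p'$ runs over $(1,\infty)$ together with $p$, it is therefore enough to treat $\cL(\ell_p,\ell_\infty)$.

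For that case I would transport the closed ideals of $\cL(\ell_p,\co)$ along the isometric embedding $\iota\colon\cL(\ell_p,\co)\to\cL(\ell_p,\ell_\infty)$, $\iota(T)=j\circ T$, where $j\colon\co\hookrightarrow\ell_\infty$ is the inclusion. For a closed ideal $\cJ$ of $\cL(\ell_p,\co)$ let $\widehat\cJ$ be the closed ideal of $\cL(\ell_p,\ell_\infty)$ generated by $\iota(\cJ)$. The point is that, for \emph{any} closed ideal $\cI$ of $\cL(\ell_p,\ell_\infty)$, the set $\iota^{-1}(\cI)$ is a closed ideal of $\cL(\ell_p,\co)$: given $T$ with $jT\in\cI$, $A\in\cL(\co)$ and $B\in\cL(\ell_p)$, the $1$-injectivity of $\ell_\infty$ extends $jA\colon\co\to\ell_\infty$ to some $\widetilde A\in\cL(\ell_\infty)$, and then $j(ATB)=\widetilde A\,(jT)\,B\in\cI$. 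In particular $\iota^{-1}(\widehat\cJ)$ is a closed ideal of $\cL(\ell_p,\co)$ containing $\cJ$; if one can show $\iota^{-1}(\widehat\cJ)=\cJ$ for the ideals $\cJ$ furnished by Theorem~\ref{mainthm:l_p-c_0} and Theorem~\ref{thm:structure-of-ideals} — for the chain $(\cJ_r)_{r\in\br}$ it would even suffice to know $\iota^{-1}(\widehat{\cJ_r})\cap\cJ_{r'}=\cJ_r$ for $r<r'$ — then $\cJ\mapsto\widehat\cJ$ is injective and $\cL(\ell_p,\ell_\infty)$ has $\continuum$ closed ideals.

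The hard part will be the inclusion $\iota^{-1}(\widehat\cJ)\subseteq\cJ$: to show that if $T\in\cL(\ell_p,\co)$ and $jT$ is a norm limit of finite sums $\sum_i U_i\,jS_i\,B_i$ with $U_i\in\cL(\ell_\infty)$, $S_i\in\cJ$ and $B_i\in\cL(\ell_p)$, then $T\in\cJ$. There is no bounded projection $\ell_\infty\to\co$, and composing with the coordinate projections onto the first $m$ coordinates is of no use, because $P_m(jT)\to jT$ would force $T$ to be compact, whereas the ideals $\cJ$ contain non-compact operators. Instead one has to re-enter the proof of Theorem~\ref{thm:structure-of-ideals}: the quantity that separates the ideals $\cJ$ there is read off from the action of the operator on the finite-dimensional sections coming from the underlying RIP matrices, and since finite-dimensional $\ell_\infty$-sections and $\co$-sections coincide, that quantity is unchanged when the target is enlarged from $\co$ to $\ell_\infty$. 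Composing the approximating sums with the block projections attached to that construction — which, unlike the $P_m$, still lie in $\cL(\co)$ after pre-composition with any operator in $\cL(\ell_\infty)$, so that the truncated sums remain in $\cJ$ — one should recover exactly the finite-dimensional data of $T$ that governs membership in $\cJ$, and conclude $T\in\cJ$. I expect this verification, that the invariant separating the ideals of Theorem~\ref{thm:structure-of-ideals} is insensitive to the change of target from $\co$ to $\ell_\infty$, to be essentially the whole of the work; granting it, the two reductions above assemble into Theorem~\ref{mainthm:l_1-l_q}.
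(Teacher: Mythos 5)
Your two-step plan --- handle $\cL(\ell_p,\ell_\infty)$ first and then pass to $\cL(\ell_1,\ell_{p'})$ by duality --- is the same as the paper's. The duality half you give is correct and in a small way cleaner than the paper's: you observe that $\Theta\colon S\mapsto S^*$ is an isometry \emph{onto} $\cL(\ell_{p'},\ell_\infty)$, because every $R\colon\ell_{p'}\to\ell_\infty=\ell_1^*$ has a pre-adjoint $R^*\restrict\ell_1$, so preimages of closed ideals under $\Theta$ are closed ideals and the induced correspondence is automatically injective. The paper instead invokes Proposition~\ref{prop:ideal-duality}, which gives only monotonicity of $\cJ\mapsto\cJ_*$, and then checks separately in Theorem~\ref{thm:l-1-to-l_p} that the gaps between ideals are witnessed by dual operators $S_M$ with $S_M^*=J\circ T_M$; your version makes that check unnecessary.

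The gap is exactly where you flag it, and the sketch you offer does not close it. Composing the approximants with finite block projections $Q_n$ yields only $Q_nT\in\cJ$ for each $n$; since $T$ is not a norm limit of the $Q_nT$ and membership in a closed ideal is not a block-by-block condition, this does not force $T\in\cJ$. The statement actually needed is the non-membership $J\circ T_M\notin\cJ^{T_N}(U,W)$ for $M\setminus N$ infinite, and the paper (Theorem~\ref{thm:l-p-to-l_infty}) proves it by modifying the separating functionals, not by truncating: it chooses Hahn--Banach extensions $f^{(m)}_i\in W^*$ of $e^{(m)}_i\in V^*$ with $J^*f^{(m)}_i=e^{(m)}_i$ and $\norm{f^{(m)}_i}=1$, sets $\Psi_m(S)=\tfrac1{v_m}\sum_i\ip{Sg^{(m)}_i}{f^{(m)}_i}$, and notes that $\Psi_m(J\circ T_M)=\Phi_m(T_M)=1$ for $m\in M$ while for $A\in\cL(V,W)$ with $\norm{A}\leq1$ one has $\abs{\Psi_m(AT_NB)}\leq\tfrac1{v_m}\sum_i\norm{T_NBg^{(m)}_i}$ exactly as in~\eqref{eq:ub-on-phi-m}. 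From that point on the argument sees only $T_NB\in\cL(U,V)$ and runs verbatim. So the invariant is indeed insensitive to enlarging the target from $\co$ to $\ell_\infty$, but that insensitivity is implemented by building new dual vectors, not by block projections; this is the piece of work your proposal leaves open.
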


As before, we will present more precise statements about the lattice
of closed ideals for these spaces. We do not know if $\cL(\ell_1,\co)$
and $\cL(\ell_1,\ell_\infty)$ have continuum many closed ideals. Thus,
in these cases the best known result is the aforementioned theorem of
Sirotkin and Wallis~\cite{sirotkin-wallis:16} that shows the existence
of an $\omega_1$-chain of closed ideals.

The paper is organized as follows. Section~\ref{sec:main-results}
begins with definitions, notations and the introduction of RIP
vectors. We then present the main results about ideals in
$\cL(\ell_p,\co)$. In Section~\ref{sec:secondary-results} we present a
proof of Theorem~\ref{mainthm:l_1-l_q}. In the final section we first
explain the method of Sirotkin and Wallis and how it differs from
ours. We then prove some further results of interest about the ideal
structure in $\cL(\ell_p,\co)$. These explain why the techniques
of~\cite{sirotkin-wallis:16} cannot work here. We conclude with
further remarks and open problems.

To conclude this introduction, we mention two pieces of notation used
throughout the paper. Firstly, we denote by $X\cong Y$ if the Banach
spaces $X$ and $Y$ are isometrically isomorphic, whereas $X\sim Y$
indicates that they are merely isomorphic. Secondly, the action of a
functional $f\in X^*$ on a vector $x\in X$ will be written as
$\ip{x}{f}$. Our convention is that the vector always appears on the
left and the functional on the right.

\section{Closed ideals in $\cL(\ell_p,\co)$}
\label{sec:main-results}

Given Banach spaces $X$ and $Y$, we denote by $\cL(X,Y)$ the space of
all operators from $X$ to $Y$. By an operator we shall always mean a
bounded linear map. When $X=Y$ we write $\cL(X)$ instead of
$\cL(X,X)$. When $X$ is formally a subset of $Y$, we write $I_{X,Y}$
for the formal inclusion map $X\to Y$ provided this is bounded. We
shall sometimes write $I$ instead of $I_{X,Y}$ when $X$ and $Y$ are
clear from the context. Assume now that
$X=\big( \bigoplus_{n=1}^\infty X_n\big)_{\ell_p}$ for some
$1\leq p<\infty$ and that either
$Y=\big( \bigoplus_{n=1}^\infty Y_n\big)_{\ell_q}$ for some
$p\leq q\leq\infty$ or
$Y=\big( \bigoplus_{n=1}^\infty Y_n\big)_{\co}$. Then, given a
uniformly bounded sequence of operators $T_n\colon X_n\to Y_n$,
$n\in\bn$, we write $T=\diag (T_n)_{n\in\bn}$ for the diagonal
operator $T\colon X\to Y$ defined by $(x_n)\mapsto (T_nx_n)$.

By an \emph{(operator) ideal of $\cL(X,Y)$ }we
mean a subspace $\cJ$ of $\cL(X,Y)$ such that $ATB\in\cJ$ for all
$A\in\cL(Y)$, $T\in\cJ$ and $B\in\cL(X)$. A \emph{closed (operator)
  ideal }is an ideal that is closed in the operator norm. Note that
the closure of an ideal is a closed ideal. We shall denote by
$\fH(X,Y)$ the set of all closed ideals of $\cL(X,Y)$. Note that
$\fH(X,Y)$ is a distributive lattice under inclusion: the \emph{meet
}and \emph{join }operations are given by
\[
\cI\wedge \cJ=\cI\cap\cJ\ ,\quad \cI\vee\cJ=\overline{\cI+\cJ}\qquad
\text{for }\cI, \cJ\in\fH(X,Y)\ .
\]
We next recall some standard operator ideals. We denote by
$\cK(X,Y)$, $\fss(X,Y)$, and $\cS(X,Y)$ the closed ideals of,
respectively, compact, finitely strictly singular, and strictly
singular operators from $X$ to $Y$. Recall that $T\in \cL(X,Y)$ is
\emph{finitely strictly singular }if for all $\vare>0$ there exists
$n\in\bn$ such that for every subspace $E\subset X$ of dimension at
least $n$, there is a vector $x\in E$ with
$\norm{Tx}<\vare\norm{x}$. We say $T$ is \emph{strictly singular }if
no restriction of $T$ to an infinite-dimensional subspace of $X$ is an
isomorphism. It is clear that
$\cK(X,Y)\subset\fss(X,Y)\subset\cS(X,Y)$. When $X=Y$ these ideals
become $\cK(X)\subset\fss(X)\subset\cS(X)$. We shall also sometimes
write $\cK$, $\fss$ and $\cS$ when $X$ and $Y$ are clear from the
context. Recall that if $X=\ell_p$,
$1\leq p<\infty$, and either $Y=\ell_q$, $p<q<\infty$, or $Y=\co$, then
$\cL(X,Y)=\cS(X,Y)$ and $\cL(Y,X)=\cK(Y,X)$.

We next introduce notation for closed ideals generated by a fixed
operator. Let $T\colon W\to Z$ be a bounded linear map. For any pair
$(X,Y)$ of Banach spaces we denote by $\cJ^T(X,Y)$ the closed ideal of
$\cL(X,Y)$ generated by $T$. Thus,
\[
\cJ^T(X,Y)=\cspn \{ ATB:\, A\in\cL(Z,Y), B\in\cL(X,W) \}
\]
is the closed linear span of operators factoring through $T$. As
usual, we write $\cJ^T(X)$ instead of $\cJ^T(X,Y)$ when $X=Y$. We
shall also write $\cJ^T$ instead of $\cJ^T(X,Y)$ or $\cJ^T(X)$ when
$X$ and $Y$ are clear from the context.

We now begin our study of closed ideals in $\cL(\ell_p,\co)$ where
$1\leq p<\infty$ is fixed for the rest of this section. As
mentioned in the Introduction, we know of the following ideals:
\[
\{ 0\} \subsetneq \cK \subsetneq \cJ^{I_{\ell_p,\co}} \subset \fss
\subsetneq \cS =\cL(\ell_p,\co)\ .
\]
To see that $\fss$ is a proper ideal, fix for each $n\in\bn$ an
embedding $T_n\colon \ell_p^n\to \ell_\infty^{M_n}$ for a sufficiently
large $M_n\in\bn$ satisfying, say,
$\norm{T_nx}\leq \norm{x}\leq 2\norm{T_nx}$ for all
$x\in\ell_p^n$. Then the diagonal operator $\diag(T_n)_{n\in\bn}$ from
$\ell_p\cong \big(\bigoplus _{n\in\bn}\ell_p^n\big)_{\ell_p}$ to
$\co\cong \big(\bigoplus _{n\in\bn}\ell_\infty^{M_n}\big)_{\co}$ is
clearly not finitely strictly singular. As noted in~\cite{wallis:15},
the results of~\cite{sz:15} extend without much difficulty to show
that in the range $1<p<2$, there is a chain of closed ideals
order-isomorphic to $\br$ between $\cJ^{I_{\ell_p,\co}}$ and
$\fss$. However, for $2\leq p<\infty$ it was not even known if there
are any other non-trivial, proper closed ideals besides $\cK$ and
$\fss$. We remedy this situation with our main result,
Theorem~\ref{thm:structure-of-ideals} below. The main ingredient will
be certain RIP vectors which we introduce next.

Fix $\delta\in (0,1)$ and positive integers $k<n\leq N$. We say that
the $n\times N$ matrix $A$ satisfies the \emph{Restricted Isometry
  Property (RIP) of order $k$ with error $\delta$ }if for all
$x\in \br^N$ with at most $k$ non-zero entries the following holds:
\[
(1-\delta) \norm{x}_{\ell_2^N} \leq \norm{Ax}_{\ell_2^n} \leq
(1+\delta) \norm{x}_{\ell_2^N}\ .
\]
Cand\`es and Tao~\cite{candes-tao:05} introduced this property and
established the important r\^ole it plays in Compressed
Sensing. Briefly, such matrices allow recovery of sparse data from few
measurements. Cand\`es and Tao also established the existence of
matrices with the RIP: roughly speaking, they proved that a random
$n\times N$ matrix
with Gaussian entries satisfies RIP with overwhelming probability. In
particular, and this is what we will use below, for any
$\delta\in(0,1)$ and $k\in\bn$, there exists a constant
$c=c(\delta,k)>0$ such that for all $n, N\in\bn$ satisfying
$\frac{\log N}{n}<c$ there exists an $n\times N$ matrix with the RIP
of order $k$ and error $\delta$. Before
continuing, let us mention that the RIP phenomenon has also been
fundamental in asymptotic finite-dimensional Banach space
theory. Indeed, all the tools necessary for constructing RIP matrices
were already available in the 1970s and 1980s, for example, in
Kashin's decomposition of $\ell_1^{2n}$ into two $n$-dimensional
Euclidean sections~\cite{kashin:78}, and in the well known
Johnson--Lindenstrauss lemma concerning certain Lipschitz mappings of
a finite set in Euclidean space onto a space of dimension logarithmic
in the size of the set~\cite{johnson-lindenstrauss:84}. We refer the
reader to the article of Baraniuk, Davenport, DeVore and
Waking~\cite{bddw:08} where they discuss these connections and provide
a simple proof of the RIP for random matrices whose entries satisfy
certain concentration inequalities (this includes matrices with
independent Gaussian or Rademacher entries as well as others). The
book of Foucart and Rauhut~\cite{foucart-rauhut:13} also discusses
these topics in detail as well as many other aspects of the
mathematics of Compressed Sensing.

The following statement is a straightforward consequence of the above
discussion. There exists a sequence $u_1<v_1<u_2<v_2<\dots$ of
positive integers such that setting
\begin{equation}
  \label{eq:defn-of-s_n}
  s_n=\begin{cases}%
  (2u_n)^{p/2}\cdot n^p & \text{if }2\leq p<\infty\\
  2u_n\cdot n^2 & \text{if }1\leq p\leq 2
  \end{cases}
\end{equation}
the following properties hold:
\begin{equation}
  \label{eq:choice-of-u_n}
  u_n\geq 19 n^3\cdot (6n+1)^{u_1+u_2+\dots + u_{n-1}}\ ,
\end{equation}
\begin{equation}
  \label{eq:choice-of-v_n}
  v_n\geq 9 n^3 \cdot s_n\ ,
\end{equation}
and there exist unit vectors $\big( g^{(n)}_i\big)_{i=1}^{v_n}$ in
$\ell_2^{u_n}$ (the normalized columns of an RIP matrix) such that for
every $J\subset \{1,2,\dots,v_n\}$ with $\abs{J}\leq (s_{n-1}+1)\vee
19n^2$ we have
\begin{equation}
  \label{eq:almost-on}
  \frac12 \sum_{i\in J}\abs{a_i}^2 \leq \Bignorm{\sum_{i\in J} a_i
    g^{(n)}_i}^2 \leq 2\sum_{i\in J} \abs{a_i}^2\qquad \text{for all
  }(a_i)_{i\in J}\subset \br\ ,
\end{equation}
and
\begin{equation}
  \label{eq:besselian}
  \sum _{i\in J} \abs{\ip{x}{g^{(n)}_i}}^2 \leq 2\norm{x}^2 \qquad
  \text{for all }x\in\ell_2^{u_n}\ .
\end{equation}
We next define, for each $n\in\bn$, operators $T_n\colon
\ell_2^{u_n}\to\ell_\infty^{v_n}$ by $x\mapsto\big( \ip{x}{g^{(n)}_i}
\big)_{i=1}^{v_n}$. Note that $\norm{T_n}=1$, and hence we may use
these maps to define certain diagonal operators. Set
$U=\big(\bigoplus_{n=1}^\infty \ell_2^{u_n} \big)_{\ell_p}$ and
$V=\big(\bigoplus _{n=1}^\infty \ell_\infty^{v_n} \big)_{\co}$. For an
infinite set  $M\subset\bn$ define $T_M\colon U\to V$ by $(x_n)\mapsto
(y_n)$, where $y_n=T_n(x_n)$ if $n\in M$ and $y_n=0$ otherwise. The
following is the main result of this section.
\begin{thm}
  \label{thm:l-p-to-co}
  Let $M,N$ be infinite subsets of $\bn$. 
  \begin{mylist}{(ii)}
  \item
    If $M\setminus N$ is infinite, then $T_M\notin \cJ^{T_N}(U,V)$.
  \item
    If $N\setminus M$ is finite, then $\cJ^{T_N}(U,V)\subset
    \cJ^{T_M}(U,V)$.
  \end{mylist}
\end{thm}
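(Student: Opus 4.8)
I outline a plan; part~(ii) is the easy half.

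\textbf{Part (ii).} It suffices to show $T_N\in\cJ^{T_M}(U,V)$. Split $T_N=T_{N\cap M}+T_{N\setminus M}$. Since $N\cap M\subseteq M$, one has $T_{N\cap M}=T_MQ$, where $Q\colon U\to U$ is the canonical coordinate projection onto $\bigoplus_{n\in N\cap M}\ell_2^{u_n}$, so $T_{N\cap M}\in\cJ^{T_M}$. And $T_{N\setminus M}$ has finite rank ($N\setminus M$ is finite and each $T_n$ has finite rank), while every finite rank operator belongs to $\cJ^{T_M}$: a rank one map $x\mapsto\ip{x}{f}y$ can be written $AT_MB$ by choosing $B$ to land in one block $\ell_2^{u_{n_0}}$ with $n_0\in M$ and $A$ a suitable rank one map out of $\ell_\infty^{v_{n_0}}$. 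Hence $T_N\in\cJ^{T_M}$, which gives $\cJ^{T_N}(U,V)\subseteq\cJ^{T_M}(U,V)$.

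\textbf{Part (i).} Suppose, for contradiction, $T_M\in\cJ^{T_N}(U,V)$, and fix a small constant $\vare$ (say $\vare=\tfrac1{100}$). Because the linear span of $\{AT_NB:A\in\cL(V),\ B\in\cL(U)\}$ is already an ideal, whose closure is $\cJ^{T_N}(U,V)$, there are $A_1,\dots,A_m\in\cL(V)$, $B_1,\dots,B_m\in\cL(U)$ with $\bignorm{T_M-\sum_{j\le m}A_jT_NB_j}<\vare$. Collecting terms, $\sum_jA_jT_NB_j=C\,\widehat T_N\,D$, where $\widehat T_N=\diag(T_N,\dots,T_N)$ ($m$ copies), viewed as an operator $\widehat U\to\widehat V$ with $\widehat U=\big(\bigoplus_{j\le m}U\big)_{\ell_p}$, $\widehat V=\big(\bigoplus_{j\le m}V\big)_{\co}$, and $\norm C\,\norm D\le K:=\big(\sum_j\norm{A_j}\big)\big(\sum_j\norm{B_j}\big)$. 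Regrouping coordinates, $\widehat U=\big(\bigoplus_kW_k\big)_{\ell_p}$, $\widehat V=\big(\bigoplus_kZ_k\big)_{\co}$ with $W_k=(\ell_2^{u_k})^m$ (an $\ell_p$-combination) and $Z_k=(\ell_\infty^{v_k})^m$ (a sup combination), and $\widehat T_N=\diag(S_k)_{k\in N}$, $S_k=\diag(T_k,\dots,T_k)\colon W_k\to Z_k$. Now fix $n\in M\setminus N$ and localise: with $Q_n\colon V\to\ell_\infty^{v_n}$ the coordinate projection and $\iota_n\colon\ell_2^{u_n}\to U$ the coordinate inclusion, $Q_nT_M\iota_n=T_n$ (as $n\in M$), so $\norm{T_n-\Phi\,\widehat T_N\,\Psi}<\vare$ with $\Phi=Q_nC$, $\Psi=D\iota_n$, $\norm\Phi\,\norm\Psi\le K$. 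Taking adjoints and evaluating at the unit vectors $e_i^*\in\ell_1^{v_n}$ (so $T_n^*e_i^*=g^{(n)}_i$), I get, for each $i\le v_n$,
\[
\Bignorm{\,g^{(n)}_i-\sum_{k\in N}h_{i,k}\,}<\vare,\qquad h_{i,k}:=\Psi^*_{(k)}S_k^*\,\xi^{(i,k)},
\]
where $\xi^{(i,k)}:=(\Phi^*e_i^*)_k\in Z_k^*$ and $\Psi^*_{(k)}\colon W_k^*\to\ell_2^{u_n}$ is the $k$th coordinate piece of $\Psi^*$. Here is the structural point: $\widehat V$ is a $\co$-sum, so $\widehat V^*$ is an $\ell_1$-sum, whence $\sum_k\norm{\xi^{(i,k)}}\le\norm\Phi$ and therefore $\sum_k\norm{h_{i,k}}\le\norm\Phi\,\norm\Psi\le K$. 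Moreover $S_k^*$ maps the unit ball of $Z_k^*$ into the convex hull of the vectors $\pm\big(g^{(k)}_j\text{ in copy }t\big)$, so $h_{i,k}$ lies in the subspace $\Psi^*_{(k)}(W_k^*)$ of $\ell_2^{u_n}$, of dimension $\le mu_k$.

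Since $n\notin N$, I split $\sum_{k\in N}h_{i,k}$ into the low part ($k<n$) and the high part ($k>n$). The low part lies in the fixed subspace $F:=\sum_{k\in N,\,k<n}\Psi^*_{(k)}(W_k^*)$ of $\ell_2^{u_n}$, with $\dim F\le m\sum_{k<n}u_k$, which is negligible next to $u_n$ by \eqref{eq:choice-of-u_n}. The core of the argument is the high part: one must show $\bignorm{\sum_{k\in N,\,k>n}h_{i,k}}\le\vare'$ for every $i\le v_n$, with $\vare'$ as small as desired once $n$ is large. \textbf{I expect this to be the main obstacle.} It has to use the Restricted Isometry conditions \eqref{eq:almost-on}, \eqref{eq:besselian} at the levels $k>n$ -- at level $k$ the vectors $g^{(k)}_j$ are almost orthonormal, hence ``flat'', on any index set of size up to $s_{k-1}\ge s_n$ -- together with the bound $\sum_k\norm{\xi^{(i,k)}}\le K$ and the extremely rapid growth of $u_k,v_k$ from \eqref{eq:choice-of-u_n}, \eqref{eq:choice-of-v_n}: the rough idea is to sparsify each $\xi^{(i,k)}$, absorb its few large coordinates into $F$ at the cost of a bounded increase of $\dim F$, and control the flat remainder via \eqref{eq:besselian}.

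Granting the high-part estimate, every unit vector $g^{(n)}_i$ lies within $\eta:=\vare+\vare'$ of $F$, $\dim F\le m\sum_{k<n}u_k$. By \eqref{eq:almost-on} applied to $J=\{i,i'\}$ the $g^{(n)}_i$ are $1$-separated, so their orthogonal projections onto $F$ form a $(1-2\eta)$-separated subset of the $(1+\eta)$-ball of $F$; a standard volumetric estimate then yields $v_n\le 6^{\dim F}\le 6^{m\sum_{k<n}u_k}$ (for $\eta<\tfrac14$). On the other hand \eqref{eq:choice-of-v_n} and \eqref{eq:defn-of-s_n} give $v_n\ge 9n^3s_n\ge 18u_n$, and \eqref{eq:choice-of-u_n} gives $u_n\ge(6n+1)^{\sum_{k<n}u_k}$. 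Hence, once $6n+1>6^m$ -- which fails for only finitely many $n$ -- we obtain $v_n\ge 18u_n\ge 18(6n+1)^{\sum_{k<n}u_k}>6^{m\sum_{k<n}u_k}\ge v_n$, a contradiction. As $M\setminus N$ is infinite, such an $n$ exists, so $T_M\notin\cJ^{T_N}(U,V)$.
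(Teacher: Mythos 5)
Part~(ii) is fine and matches the paper's (essentially one-line) argument. The setup for part~(i) is also sound: reducing to $C\,\widehat T_N\,D$, localising at a block $n\in M\setminus N$, dualising so that $g^{(n)}_i\approx\sum_{k\in N}h_{i,k}$ with $\sum_k\norm{h_{i,k}}$ controlled by the $\ell_1$-structure of $\widehat V^*$, and observing that the low part $k<n$ lands in a fixed subspace $F$ of dimension $\leq m\sum_{k<n}u_k\ll u_n$. The closing volumetric estimate is also roughly the right idea (modulo that the ball you pack into has radius $\sim K$, not $1+\eta$, so the base of the exponential should depend on $K$; that only costs a slightly larger $n$).

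The genuine gap is exactly where you flag it: the high-part estimate. You only offer a ``rough idea,'' and that idea does not clearly work as stated. Two issues. First, the estimate you want is \emph{pointwise in $i$}: $\bignorm{\sum_{k\in N,\,k>n}h_{i,k}}\leq\vare'$ for \emph{every} $i\leq v_n$. There is no reason for this to hold for every $i$ -- nothing prevents $\Psi^*_{(k)}S_k^*\xi^{(i,k)}$ from reproducing a particular $g^{(n)}_i$. What is true (and suffices for the volumetric count after you relax ``every $i$'' to ``all but a small fraction of $i$'') is an \emph{averaged} version. Second, even the averaged version is the real content of the theorem and requires a nontrivial mechanism: in the paper it is Lemma~\ref{lem:factoring-through-formal-id}, which uses the Besselian bound~\eqref{eq:besselian} together with H\"older to show that the restriction of the high-block diagonal $\diag(T_k)_{k>n}$ to the low-dimensional space $\Psi(\ell_2^{u_n})$ factors, up to error $1/n$, through a formal identity $\ell_2^r\to\ell_\infty^r$ with $r\leq s_n$; one then needs a separate pigeon-hole (using~\eqref{eq:choice-of-v_n} and~\eqref{eq:almost-on}) to show that a formal identity of rank $\leq s_n$ cannot have $\norm{IPg^{(n)}_i}$ large for more than $v_n/n$ values of $i$. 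Your sketch (``sparsify $\xi^{(i,k)}$, absorb large coordinates into $F$, control the flat remainder'') gestures at this but does not establish either the factoring step or the count, and it is not clear how the sparsification would give a uniform $\dim F$ bound across all $i$ simultaneously.

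For comparison: the paper avoids a contradiction argument altogether by constructing a separating functional. It sets $\Phi_m(S)=\frac1{v_m}\sum_i\ip{Sg^{(m)}_i}{e^{(m)}_i}$, notes $\Phi_m(T_M)=1$, and proves $\Phi_m(AT_NB)\to 0$ along $m\in M\setminus N$; a weak$^*$ cluster point then separates $T_M$ from $\cJ^{T_N}$. The inequality $\abs{\Phi_m(AT_NB)}\leq\frac1{v_m}\sum_i\norm{T_NBg^{(m)}_i}$ is split into a low part and a high part exactly as you propose, but each piece is bounded by showing that the set of \emph{bad} indices $i$ has size at most $v_m/m$ -- a $\frac1{3m}$-net argument plus~\eqref{eq:almost-on} for the low part, and Lemma~\ref{lem:factoring-through-formal-id} plus a coordinate pigeon-hole for the high part. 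Your framing can be made to work along the same lines, but the high-part bound you need is precisely the hard step, and it is currently missing.
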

As a corollary we will deduce that the lattice of closed ideals of
$\cL(U,V)$ has a rich structure in the following precise sense. Define
$\fB$ to be the Boolean algebra obtained as the quotient of the power
set $\cP\bn$ of $\bn$ by the equivalence relation $\sim$ defined by
$M\sim N$ if and only if $M\symdif N$ is finite. By
Theorem~\ref{thm:l-p-to-co}(ii) above, the ideal $\cJ^{T_M}(U,V)$
depends only on the equivalence class $[M]$ of the infinite set
$M$. Thus, we have a well-defined map
\begin{eqnarray*}
  \varf\colon\fB&\to&\fH(U,V)\\[1ex]
  [M] &\mapsto& %
  \begin{cases}
    \cJ^{T_M} & \text{if $M$ is infinite},\\
    \cJ^{I_{U,V}} & \text{if $M$ is finite.}
  \end{cases}
\end{eqnarray*}
Here $I_{U,V}$ is the formal inclusion of $U$ into $V$. Note that $U$
can indeed be viewed as a subset of $V$ in the following way. Every
element of $U=\big(\bigoplus_{n=1}^\infty \ell_2^{u_n} \big)_{\ell_p}$
can be thought of as a sequence $(x_i)_{i=1}^\infty$ with
$\sum_{n=1}^\infty\Big(\sum_{i=t_{n-1}+1}^{t_n}\abs{x_i}^2\Big)^{p/2}<\infty$,
where $t_n=u_1+\dots+u_n$ for all $n\in\bn$. On the other hand, $V$ is
nothing else but $\co$ with the particular choice
$\big(\bigoplus_{n=1}^\infty\ell_\infty^{v_n}\big)_{\co}$ of blocking
of its unit vector basis. It will also be helpful to think of
$I_{U,V}$ as a diagonal operator: write $V=\co$ as
$\big(\bigoplus_{n=1}^\infty\ell_\infty^{u_n}\big)_{\co}$ using a
different blocking of the unit vector basis. Then
$I_{U,V}\colon U\to V$ is the diagonal operator whose $n^{\text{th}}$
diagonal entry is the formal identity
$\ell_2^{u_n}\to\ell_\infty^{u_n}$. At the very end of this section we
will comment on the choice of $\varf([M])$ for $M$ finite.
\begin{thm}
  \label{thm:structure-of-ideals}
  The map $\varf$ defined above is injective, monotone and preserves
  the join operation. In particular, $\cL(U,V)$ has continuum many
  closed ideals between $\cJ^{I_{U,V}}$ and $\fss$.
\end{thm}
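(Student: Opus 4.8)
The plan is to derive Theorem~\ref{thm:structure-of-ideals} from Theorem~\ref{thm:l-p-to-co} together with two standard facts — that $\cJ^{I_{U,V}}$ is contained in every closed ideal of $\cL(U,V)$ that strictly contains $\cK$ (noted in the introduction), and that each $T_M$ is finitely strictly singular — after which the argument is largely formal. Recall that $[M]\leq[N]$ in $\fB$ means that $M\setminus N$ is finite. \emph{Well-definedness and monotonicity:} for infinite $M,N$ with $M\setminus N$ finite, Theorem~\ref{thm:l-p-to-co}(ii) (applied with the roles of $M$ and $N$ interchanged) gives $\cJ^{T_M}\subseteq\cJ^{T_N}$, and using this in both directions when $M\symdif N$ is finite yields well-definedness, as already observed before the statement. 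To treat the bottom element $[\emptyset]$, note that $T_N$ is non-compact for every infinite $N$: the images $T_n g_1^{(n)}$, $n\in N$, are supported in pairwise disjoint blocks of $V$ and satisfy $\norm{T_n g_1^{(n)}}_\infty\geq\ip{g_1^{(n)}}{g_1^{(n)}}=1$, so they form a $1$-separated subset of the image of the bounded set $\{g_1^{(n)}:n\in N\}\subseteq U$. Thus $\cJ^{T_N}$ strictly contains $\cK$, so $\cJ^{I_{U,V}}\subseteq\cJ^{T_N}$ by the first standard fact. Combining the two steps gives monotonicity of $\varf$ on all of $\fB$, and in particular $\cJ^{I_{U,V}}=\varf([\emptyset])\subseteq\varf([M])$ for every $M$.

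\emph{Injectivity and joins.} Let $[M]\neq[N]$. Interchanging $M$ and $N$ if necessary, we may assume $M\setminus N$ is infinite, so that $M$ is infinite and $T_M\in\varf([M])$. If $N$ is infinite, Theorem~\ref{thm:l-p-to-co}(i) gives $T_M\notin\cJ^{T_N}=\varf([N])$; if $N$ is finite, pick an infinite $N_0\subseteq\bn$ with $M\setminus N_0$ infinite, so that $\cJ^{I_{U,V}}\subseteq\cJ^{T_{N_0}}$ (previous paragraph) while $T_M\notin\cJ^{T_{N_0}}$ (Theorem~\ref{thm:l-p-to-co}(i)), whence $T_M\notin\cJ^{I_{U,V}}=\varf([N])$. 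In either case $\varf([M])\neq\varf([N])$, so $\varf$ is injective. For the join, note that $[M]\vee[N]=[M\cup N]$ in $\fB$; monotonicity gives $\varf([M])\vee\varf([N])\subseteq\varf([M\cup N])$. For the reverse inclusion, use the operator identity $T_{M\cup N}=T_M+T_N-T_{M\cap N}$: since $M\cap N\subseteq M$ we have $T_{M\cap N}\in\cJ^{T_M}$ (by monotonicity when $M\cap N$ is infinite, and because $T_{M\cap N}$ is then finite-rank, hence lies in $\cK\subseteq\cJ^{I_{U,V}}\subseteq\cJ^{T_M}$, when $M\cap N$ is finite), so $T_{M\cup N}\in\cJ^{T_M}+\cJ^{T_N}$; as $\varf([M])\vee\varf([N])$ is a closed ideal containing $T_{M\cup N}$, it contains the closed ideal $\cJ^{T_{M\cup N}}=\varf([M\cup N])$ that $T_{M\cup N}$ generates. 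The cases in which $M$ or $N$ is finite again reduce to $\cJ^{I_{U,V}}\subseteq\cJ^{T_\bullet}$ from the previous paragraph.

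\emph{Conclusion, and the main obstacle.} Since $\fB$ has $\continuum$ many elements (an almost disjoint family of infinite subsets of $\bn$ already consists of $\continuum$ pairwise inequivalent sets), injectivity yields $\continuum$ many distinct closed ideals $\varf([M])$, each containing $\cJ^{I_{U,V}}$ by the first paragraph. It then remains to check that $\varf([M])\subseteq\fss$, i.e.\ that $T_M$ is finitely strictly singular; this is the only genuinely non-formal step and, I expect, the main obstacle, and it is where the growth conditions \eqref{eq:choice-of-u_n}--\eqref{eq:choice-of-v_n} on $(u_n)$ and $(v_n)$ are used. I would prove it by combining a concentration estimate on the sphere with the usual reduction for diagonal operators into a $\co$-sum: for fixed $\vare>0$, a union bound over the $v_n$ slabs $\{x:\abs{\ip{x}{g_i^{(n)}}}\geq\vare\norm{x}\}$ shows that every subspace of $\ell_2^{u_n}$ of dimension a constant multiple of $\vare^{-2}\log v_n$ contains a vector $x$ with $\norm{T_n x}_\infty<\vare\norm{x}$; and a subspace of $U$ of sufficiently large dimension either already contains a unit vector all of whose block components are small in $\ell_2^{u_n}$-norm — hence has small image, since the norm of $V$ is the supremum over blocks — or, after a uniform truncation onto finitely many blocks, contains a large-dimensional subspace essentially supported in a single block, to which the slab estimate applies. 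Making this dichotomy quantitative against the chosen $u_n$, $v_n$ and $s_n$ is the technical heart of the argument; granting $T_M\in\fss$, Theorem~\ref{thm:structure-of-ideals} follows.
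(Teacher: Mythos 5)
Your treatment of monotonicity at the bottom element $[\emptyset]$ has a genuine gap. You appeal to the ``standard fact'' that $\cJ^{I_{U,V}}$ sits inside every closed ideal of $\cL(U,V)$ that strictly contains $\cK$, citing the introduction. But the introduction asserts this only for the canonical formal inclusion $I_{\ell_p,\co}$, and the paper itself, in the closing remark of Section~\ref{sec:main-results}, shows that for $1<p<2$ one has $\cJ^{I_{\ell_p,\co}}\subsetneq\cJ^{I_{U,V}}$, so the two generated ideals are genuinely different there. Thus for $1<p<2$ you only get $\cJ^{I_{\ell_p,\co}}\subseteq\cJ^{T_N}$ from non-compactness of $T_N$, which is strictly weaker than what you need. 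The paper closes this gap with Lemma~\ref{lem:formal-id-factors-through-T_n}: a clever random-subset averaging argument on the RIP vectors shows that the formal identity $\ell_2^m\to\ell_\infty^m$ factors through $T_n$ for all sufficiently large $n$, with uniformly bounded factors, whence $I_{U,V}$ (viewed as a diagonal of such identities) factors through $T_M$ for every infinite $M$. That lemma is indispensable here; your proposal has no substitute for it.

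On the ``In particular'' clause (that all $\cJ^{T_M}$ live inside $\fss$), you correctly flag it as the technical heart, but your sketch is not yet sound: the dimension threshold your slab/union-bound argument produces for the $n$-th block is on the order of $\vare^{-2}\log v_n$, which is unbounded in $n$, while finite strict singularity demands a single threshold $d=d(\vare)$ valid against all blocks simultaneously. Your ``dichotomy'' (small in every block vs.\ concentrated in one block) does not obviously resolve this, since the block in which a far subspace concentrates can have arbitrarily large $\log v_n$. The paper's route is different: Lemma~\ref{lem:factoring-through-formal-id} shows (using \eqref{eq:besselian} and the choice of $s_m$) that the composition of any norm-one $B\colon\ell_2^{u_m}\to\bigoplus_{n>m}\ell_2^{u_n}$ with the diagonal of the $T_n$ approximately factors through a formal identity $\ell_2^r\to\ell_\infty^r$ with $r\leq s_m$ \emph{independently of how far out in $n$ the image of $B$ lives}, and Corollary~\ref{cor:our-maps-are-fss} then combines Dvoretzky with Milman's observation that $m$-dimensional subspaces of $\co$ contain vectors with $m$ equal-magnitude largest coordinates. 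I'd encourage you to look at that lemma: the $r\leq s_m$ bound is exactly what decouples the threshold from $n$.

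The remaining parts — injectivity from Theorem~\ref{thm:l-p-to-co}(i), well-definedness and monotonicity on infinite sets from part~(ii), and join preservation via $T_{M\cup N}=T_M+T_N-T_{M\cap N}$ (the paper uses the equivalent $T_{M\cup N}=T_{M\setminus N}+T_N$) — are correct and essentially the paper's argument.
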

Since $V=\co$ and for $1<p<\infty$ the space $U$ is isomorphic to
$\ell_p$ by Pe{\l}czy\'nski's Decomposition Theorem (see \eg
\cite{czech-book}*{Theorem 6.24}), our first main result,
Theorem~\ref{mainthm:l_p-c_0}, stated in the Introduction follows
immediately.

We begin with the proof of Theorem~\ref{thm:l-p-to-co}. Of
course, part~(ii) is trivial since if $N\setminus M$ is finite, then a
finite-rank perturbation of $T_N$ factors through $T_M$. Let us now
explain why part~(i) has a chance of being true. An obvious way
for~(i) to fail would be the existence of an injection $m\mapsto n_m$
from $M$ into $N$ such that $T_m$ factors through $T_{n_m}$ for all
$m\in M$. Indeed, then $T_M$ would factor through $T_N$ and we would
immediately obtain $T_M\in \cJ^{T_N}$. However, for $m\in M\setminus
N$ and for any $n\in N$, it is hard for $T_m$ to factor through
$T_n$. Indeed, $T_m$ preserves the norm of the RIP vectors
$g^{(m)}_i$, $1\leq i\leq v_m$. When $m>n$, the number $v_m$ of these
vectors is massive compared to the dimension $u_n$ of the domain of
$T_n$, and an easy pigeonhole argument will show that one cannot have
a map $\ell_2^{u_m}\to \ell_2^{u_n}$  preserving the norm of so many
RIP vectors. The case $m<n$ is slightly more complicated. In this case,
$T_m$ factoring through $T_n$ says something about the action of $T_n$
on a subspace of small dimension (small compared to the dimension
$u_n$ of the domain of $T_n$). It will then follow
from~\eqref{eq:almost-on} and~\eqref{eq:besselian} that the
restriction of $T_n$ to this small-dimensional subspace, and hence
$T_m$, factors through the formal identity $I\colon
\ell_2^r\to\ell_\infty^r$, where $r\leq s_m$, and in particular $r$ is
still much smaller than the number $v_m$ of RIP vectors whose norm
$T_m$ preserves. Another simple pigeonhole argument will again show
that the formal identity $I$ cannot preserve the norm of so many RIP
vectors, but this time it will be crucial that $I$ is mapping into
an $\ell_\infty$-space.

We begin the proof of our main theorem with a beefed up version of the
claim above that the restriction of $T_n$ to a small-dimensional
subspace factors through the formal identity
$\ell_2^r\to\ell_\infty^r$ with a suitable bound on~$r$.

\begin{lem}
  \label{lem:factoring-through-formal-id}
  Fix $m\in\bn$, a subset $N$ of $\{n\in\bn:\,n>m\}$ and an operator
  \[
  B\colon\ell_2^{u_m}\to \Big(\bigoplus _{n\in N}
  \ell_2^{u_n}\Big)_{\ell_p}
  \]
  with $\norm{B}\leq 1$. Let $D\colon \big(\bigoplus
  _{n\in N} \ell_2^{u_n}\big)_{\ell_p} \to \big(\bigoplus _{n\in N}
  \ell_\infty^{v_n}\big)_{\co}$ be the diagonal operator
  $\diag(T_n)_{n\in N}$. Then there exist subsets
  $J_n\subset\{1,2,\dots, v_n\}$, $n\in N$, such that $\sum_{n\in N}
  \abs{J_n}\leq s_m$ and there is an approximate factorization
  \[
  \begin{diagram}[midshaft,l>=3em]
    \ell_2^{u_m} & \rTo^B & \Big(\bigoplus_{n\in N}
    \ell_2^{u_n}\Big)_{\ell_p} & \rTo^D & \Big(\bigoplus _{n\in N}
    \ell_\infty^{v_n}\Big)_{\co}\\
  \dTo^P &&&& \uTo_R \\
  \Big( \bigoplus_{n\in N} \ell_2^{J_n} \Big)_{\ell_p} && \rTo^I &&
  \Big( \bigoplus_{n\in N} \ell_\infty^{J_n} \Big)_{\co}
  \end{diagram}
  \]
  with $\norm{DB-RIP}\leq \frac{1}{m}$, where $I$ is the formal
  inclusion, $\norm{P}\leq 2$ and $\norm{R}\leq 1$.
\end{lem}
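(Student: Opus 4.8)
The plan is to let $J_n$ collect the indices at which $B$ ``pulls $g^{(n)}_i$ back'' to a large vector. Write $B_n\colon\ell_2^{u_m}\to\ell_2^{u_n}$ for $B$ followed by the coordinate projection onto the $n$-th summand, put $h_{n,i}=B_n^{*}g^{(n)}_i\in\ell_2^{u_m}$ for $n\in N$ and $1\leq i\leq v_n$, and set
\[
J_n=\bigl\{\,i\in\{1,\dots,v_n\}:\ \norm{h_{n,i}}>\tfrac1m\,\bigr\}\ .
\]
Define $P\colon\ell_2^{u_m}\to\bigl(\bigoplus_{n\in N}\ell_2^{J_n}\bigr)_{\ell_p}$ by $Pz=\bigl((\ip{B_nz}{g^{(n)}_i})_{i\in J_n}\bigr)_{n\in N}$, take $I$ to be the formal inclusion, and let $R\colon\bigl(\bigoplus_{n\in N}\ell_\infty^{J_n}\bigr)_{\co}\to\bigl(\bigoplus_{n\in N}\ell_\infty^{v_n}\bigr)_{\co}$ extend each block by zeros outside $J_n$, so that $\norm R\leq1$. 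For $\norm z\leq1$ the $n$-th block of $(DB-RIP)z$ equals $\ip{B_nz}{g^{(n)}_i}=\ip{z}{h_{n,i}}$ at the coordinates $i\notin J_n$ and is zero at the coordinates $i\in J_n$, so $\norm{(DB-RIP)z}=\sup_n\max_{i\notin J_n}\abs{\ip{z}{h_{n,i}}}\leq\tfrac1m$ by the very definition of $J_n$. Thus $\norm{DB-RIP}\leq\tfrac1m$ is automatic, and the substance of the lemma is the two estimates $\norm P\leq2$ and $\sum_{n\in N}\abs{J_n}\leq s_m$.

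Both of these rest on the Besselian inequality~\eqref{eq:besselian}. First I would prove that $\abs{J_n}\leq s_{n-1}+1$ for every $n$. Fix an orthonormal basis $(f_j)_{j=1}^{u_m}$ of $\ell_2^{u_m}$ and note that $\norm{h_{n,i}}^2=\sum_{j=1}^{u_m}\abs{\ip{B_nf_j}{g^{(n)}_i}}^2$. If $\abs{J_n}>s_{n-1}+1$, choose $J\subseteq J_n$ with $\abs J=s_{n-1}+1$; applying~\eqref{eq:besselian} to each $B_nf_j$ and using $\norm{B_n}\leq\norm B\leq1$,
\[
\frac{s_{n-1}+1}{m^2}<\sum_{i\in J}\norm{h_{n,i}}^2=\sum_{j=1}^{u_m}\sum_{i\in J}\abs{\ip{B_nf_j}{g^{(n)}_i}}^2\leq 2\sum_{j=1}^{u_m}\norm{B_nf_j}^2\leq 2u_m\ ,
\]
which is impossible since $s_m\geq2u_mm^2$ by~\eqref{eq:defn-of-s_n} and $s_{n-1}\geq s_m$, the sequence $(s_k)$ being increasing and $n-1\geq m$. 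Knowing $\abs{J_n}\leq s_{n-1}+1$, \eqref{eq:besselian} now applies to $J_n$ itself: repeating the display with $J=J_n$ gives $\abs{J_n}<2m^2\sum_{j=1}^{u_m}\norm{B_nf_j}^2$, and blockwise $\norm{Pz}^p=\sum_n\bigl(\sum_{i\in J_n}\abs{\ip{B_nz}{g^{(n)}_i}}^2\bigr)^{p/2}\leq2^{p/2}\sum_n\norm{B_nz}^p=2^{p/2}\norm{Bz}^p\leq2^{p/2}\norm z^p$, whence $\norm P\leq\sqrt2\leq2$.

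It then remains to sum $\abs{J_n}<2m^2\sum_j\norm{B_nf_j}^2$ over $n\in N$. For $1\leq p\leq2$ this is immediate, since $\sum_{n\in N}\norm{B_nf_j}^2\leq\bigl(\sum_{n\in N}\norm{B_nf_j}^p\bigr)^{2/p}=\norm{Bf_j}^2\leq1$, so $\sum_{n\in N}\abs{J_n}<2m^2u_m=s_m$. The range $2\leq p<\infty$ is where the precise shape of $s_m$ in~\eqref{eq:defn-of-s_n} is used and, I expect, where the only real work lies. Put $c_n=\sum_{j=1}^{u_m}\norm{B_nf_j}^p$; H\"older in the $u_m$ indices $j$ gives $\sum_j\norm{B_nf_j}^2\leq u_m^{1-2/p}c_n^{2/p}$, hence $\abs{J_n}<2m^2u_m^{1-2/p}c_n^{2/p}$, while $\sum_{n\in N}c_n=\sum_{j=1}^{u_m}\norm{Bf_j}^p\leq u_m$. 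Since $\abs{J_n}$ is an integer, it can be nonzero only when $c_n>(2m^2)^{-p/2}u_m^{1-p/2}$; therefore at most $L:=(2m^2u_m)^{p/2}$ of the $J_n$ are nonempty, and the concavity of $t\mapsto t^{2/p}$ together with $\sum_nc_n\leq u_m$ yields
\[
\sum_{n\in N}\abs{J_n}<2m^2u_m^{1-2/p}\sum_{n:J_n\neq\emptyset}c_n^{2/p}\leq2m^2u_m^{1-2/p}L^{1-2/p}u_m^{2/p}=(2m^2u_m)^{p/2}=(2u_m)^{p/2}m^p=s_m\ .
\]
The single delicate point is this last interpolation-type estimate for $p\geq2$; the choice of $J_n$, the maps $P,I,R$, and the bound $\norm{DB-RIP}\leq\tfrac1m$ are essentially forced, and everything else reduces to~\eqref{eq:besselian}, the normalization $\norm B\leq1$, and the crude size bounds built into the construction.
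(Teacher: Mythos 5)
Your proposal is correct, and the key steps (Besselian inequality, H\"older in the $u_m$ coordinates) are the same as in the paper, but you organize the argument differently in two places and one of them is a genuine methodological departure. The paper begins by choosing $H$ to be the set of $s_m+1$ pairs $(n,j)$ maximizing $\norm{B^*g^{(n)}_j}$, and then defines $J\subset H$ as the large-norm pairs; the cap $\abs{H}=s_m+1\leq s_{n-1}+1$ makes~\eqref{eq:besselian} applicable to each $H_n$ for free, and once $\abs{J}\leq s_m<\abs{H}$ is established it follows that everything outside $H$ has $\norm{B^*g^{(n)}_j}<\frac1m$. You instead define $J_n$ outright as the large-norm indices and then prove $\abs{J_n}\leq s_{n-1}+1$ by a separate pigeonhole step before~\eqref{eq:besselian} can be invoked; that works, but it is an extra lemma the paper's choice of $H$ sidesteps. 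The more substantive divergence is in getting $\sum_{n\in N}\abs{J_n}\leq s_m$ for $p\geq 2$. You bound the count of nonempty $J_n$ by $L=(2m^2u_m)^{p/2}$ and then apply the power-mean inequality for $t\mapsto t^{2/p}$; the paper instead uses the integrality of $\abs{J_n}$ in the form $\abs{J_n}\leq\abs{J_n}^{p/2}$, which, after raising the H\"older estimate to the power $p/2$, linearizes the inequality to $\abs{J_n}\leq 2^{p/2}u_m^{(p-2)/2}m^p\sum_i\norm{P_nBe_i}^p$ and makes the sum over $n$ immediate with no counting or Jensen step. That integrality trick is precisely what the exponent in the definition~\eqref{eq:defn-of-s_n} of $s_m$ is calibrated to accommodate, and it shortens the calculation appreciably. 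A minor stylistic difference: the paper reduces $1\leq p\leq 2$ to the case $p=2$ by factoring through a formal identity, whereas you handle all $p$ in one computation; both are fine, and your display $\sum_n\norm{B_nf_j}^2\leq\norm{Bf_j}^2$ in the $p\leq 2$ branch plays the role of that reduction. (One cosmetic slip: when $J_n=\emptyset$ the strict inequality $\abs{J_n}<2m^2\sum_j\norm{B_nf_j}^2$ can fail if $B_n=0$; it should be written as $\leq$, which is all the sum needs.)
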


\begin{proof}
  Let us first observe that we need only consider the case $2\leq
  p<\infty$. This is because the case $1\leq p\leq 2$ reduces to the
  case $p=2$. Indeed, if $1\leq p\leq 2$, then $D\colon
  \big(\bigoplus_{n\in N} \ell_2^{u_n}\big)_{\ell_p} \to
  \big(\bigoplus _{n\in N}\ell_\infty^{v_n}\big)_{\co}$ factors
  through $D$ viewed as a map from $\big(\bigoplus_{n\in N}
  \ell_2^{u_n}\big)_{\ell_2} \to \big(\bigoplus _{n\in N}
  \ell_\infty^{v_n}\big)_{\co}$ via the formal identity
  $\big(\bigoplus_{n\in N} \ell_2^{u_n}\big)_{\ell_p}\to
  \big(\bigoplus_{n\in N} \ell_2^{u_n}\big)_{\ell_2}$. The case
  $p=2$ then provides the required approximate factorization via the
  formal identity $I\colon\ell_2^r\to \ell_\infty^r$ for some $r\leq
  s_m$. This completes the proof by taking $J_n=\{1,\dots,r\}\subset
  \{1,\dots, v_n\}$ for $n=\min N$, and $J_{n'}=\emptyset$ for all
  $n'\in N\setminus\{n\}$.

  Let us now assume that $2\leq p<\infty$, and we may of course also
  take $N\neq\emptyset$. Set $t=s_m+1$ and let $H$ be a subset of $\{
  (n,j):\, n\in N,\ 1\leq j\leq v_n\}$ of size $t$ such that
  \[
  \bignorm{B^* g^{(n)}_j} \geq \bignorm{B^* g^{(n')}_{j'}}\qquad
  \text{for all }(n,j)\in H,\ (n',j')\notin H\ .
  \]
  Set $J=\big\{ (n,j)\in H:\,
  \bignorm{B^*g^{(n)}_j}\geq\frac1{m}\big\}$. For each $n\in N$ letting
  $H_n=\{ j:\,(n,j)\in H\}$ and $J_n=\{ j:\,(n,j)\in J\}$, we obtain
  the following inequalities.
  \begin{align*}
    \tfrac1{m^2}\abs{J_n} &\leq \sum_{j\in H_n} \bignorm{B^*
      g^{(n)}_j}^2 = 
    \sum_{j\in H_n} \sum _{i=1}^{u_m} \bigabs{\bigip{B^*
        g^{(n)}_j}{e_i}}^2 \\[2ex]
    & = \sum_{i=1}^{u_m} \sum_{j\in H_n}
    \bigabs{\bigip{g^{(n)}_j}{Be_i}}^2 \leq 2\cdot \sum_{i=1}^{u_m}
    \norm{P_nBe_i}^2 \\[2ex]
    & \leq 2\cdot u_m^{\frac{p-2}{p}} \left( \sum_{i=1}^{u_m}
    \norm{P_nBe_i}^p \right)^{2/p}
  \end{align*}
  where $(e_i)_{i=1}^{u_m}$ stands for the unit vector basis of
  $\ell_2^{u_m}$ and $P_n$ denotes the canonical projection of
  $\big(\bigoplus_{n'\in N}\ell_2^{u_{n'}}\big)_{\ell_p}$ onto
  $\ell_2^{u_n}$. The inequality in the second line follows
  from~\eqref{eq:besselian} and from $\abs{H_n}\leq\abs{H}=t=
  s_m+1\leq s_{n-1}+1$. Finally, the last inequality uses H\"older's
  inequality. Raising the above to the power $p/2$ yields
  \[
  \abs{J_n}\leq \abs{J_n}^{p/2} \leq 2^{p/2}\cdot
  u_m^{\frac{p-2}{2}}\cdot m^p \cdot\sum_{i=1}^{u_m}
  \norm{P_nBe_i}^p\ .
  \]
  We next sum over $n\in N$ to obtain
  \[
  \sum_{n\in N}\abs{J_n} \leq 2^{p/2}\cdot u_m^{\frac{p-2}{2}}\cdot
  m^p \cdot\sum_{i=1}^{u_m} \norm{Be_i}^p\leq 2^{p/2}\cdot
  u_m^{p/2}\cdot m^p =s_m\ ,
  \]
  using the assumption that $\norm{B}\leq 1$. In particular, we have
  $\abs{J}=\sum_{n\in N}\abs{J_n}\leq s_m<\abs{H}$. It follows from
  the choice of $H$ that $\bignorm{B^*g^{(n)}_j}<\frac1{m}$ for all
  $(n,j)\notin J$. From this we immediately obtain the claimed
  approximate factorization of $DB$ by setting
  $P(x)=\big(\bigip{Bx}{g^{(n)}_j}_{j\in J_n} \big)_{n\in N}$ for
  $x\in \ell_2^{u_m}$, and setting $R$ equal the diagonal operator
  whose $n^{\text{th}}$ diagonal entry for $n\in N$ is the canonical
  embedding
  $\ell_\infty^{J_n}\to \ell_\infty ^{v_n}$ sending $(x_j)_{j\in J_n}$
  to $(y_j)_{j=1}^{v_n}$, where $y_j=x_j$ for $j\in J_n$ and $y_j=0$
  otherwise. Note that $\norm{P}\leq 2$ follows
  from~\eqref{eq:besselian} and from $\abs{J_n}\leq\abs{J}\leq s_m
  <s_{n-1}+1$.
\end{proof}

Before moving on to the proof of Theorem~\ref{thm:l-p-to-co}, we
observe the following consequence of
Lemma~\ref{lem:factoring-through-formal-id} that will be needed in the
proof of Theorem~\ref{thm:structure-of-ideals}.
\begin{cor}
  \label{cor:our-maps-are-fss}
  For each infinite set $M\subset\bn$, the operator $T_M\colon U\to V$
  is finitely strictly singular.
\end{cor}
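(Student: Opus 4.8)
The plan is to derive the statement from Lemma~\ref{lem:factoring-through-formal-id} together with Dvoretzky's theorem and the finite strict singularity of formal inclusions into $c_0$. Write $t_m=u_1+\dots+u_m$ and $U_{>m}=\big(\bigoplus_{n>m}\ell_2^{u_n}\big)_{\ell_p}$. Fix a decreasing function $\vare_1\colon\bn\to(0,1]$ with $\vare_1(k)\to 0$ such that, for all finite sets $J_n\subset\bn$, every $k$-dimensional subspace of $\big(\bigoplus_n\ell_2^{J_n}\big)_{\ell_p}$ contains a unit vector whose norm in $\big(\bigoplus_n\ell_\infty^{J_n}\big)_{\co}$ is at most $\vare_1(k)$. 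Such a $\vare_1$ exists because the families $(J_n)$ produced by Lemma~\ref{lem:factoring-through-formal-id} satisfy $\sum_n|J_n|\leq s_m$ with $|J_n|\leq s_m<u_n$ for $n>m$ by~\eqref{eq:choice-of-u_n}, so after re-indexing the coordinates inside each block every such formal inclusion is a compression to coordinate subspaces of $I_{U,V}=\diag\big(\ell_2^{u_n}\to\ell_\infty^{u_n}\big)$, and $I_{U,V}$ is finitely strictly singular (being a formal inclusion of an $\ell_p$-type space into a $c_0$-type space); one takes $\vare_1$ to be its modulus of finite strict singularity.

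Given $\vare>0$, I would first fix $m$ so large that $\frac2m<\frac\vare2$ and $4\vare_1(u_m)<\frac\vare2$ — possible since $u_m\to\infty$. By Dvoretzky's theorem there is $N_0\in\bn$ so that every normed space of dimension $\geq N_0$ has a subspace of dimension $\geq u_m$ that is $2$-isomorphic to a Hilbert space; set $d=t_m+N_0$. Let $E\subseteq U$ be a subspace with $\dim E\geq d$. Since the coordinate projection onto the first $m$ blocks has rank $t_m$, the subspace $E_1:=\{x\in E:x_n=0\ \text{for }n\leq m\}$ has dimension $\geq N_0$ and lies in $U_{>m}$; note that $T_M$ and $\Pi_MD$ agree on $U_{>m}$, where $D=\diag(T_n)_{n>m}$ and $\Pi_M$ is the norm-one coordinate projection of $V$ onto the blocks indexed by $M$. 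By Dvoretzky, $E_1$ contains a subspace $E_1'$ of dimension $u_m$ that is $2$-isomorphic to $\ell_2^{u_m}$; fix a linear bijection $B_0\colon\ell_2^{u_m}\to E_1'$ with $\norm{B_0}\leq 1$ and $\norm{B_0^{-1}}\leq 2$, and let $B\colon\ell_2^{u_m}\to U_{>m}$ be $B_0$ followed by the inclusion $E_1'\hookrightarrow U_{>m}$, so $\norm B\leq 1$. Applying Lemma~\ref{lem:factoring-through-formal-id} with this $m$, with $N=\{n\in\bn:n>m\}$, and with this $B$, we obtain finite sets $J_n$ with $\sum_n|J_n|\leq s_m$ and operators $P,R,I$ as in the lemma with $\norm{DB-RIP}\leq\frac1m$, $\norm P\leq 2$, $\norm R\leq 1$; since $T_MB=\Pi_MDB$, also $\norm{T_MB-(\Pi_MR)IP}\leq\frac1m$.

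Now I would extract the required vector. If $P$ is not injective, take $\xi\in\ker P$ with $\norm\xi=1$ and put $x=B\xi\in E$; then $\norm x=\norm{B_0\xi}\geq\frac12$ and, since $IP\xi=0$, $\norm{T_Mx}=\norm{(T_MB-(\Pi_MR)IP)\xi}\leq\frac1m$, whence $\norm{T_Mx}\leq\frac2m\norm x<\vare\norm x$. If $P$ is injective, then $G:=P(\ell_2^{u_m})$ is $u_m$-dimensional, so by the choice of $\vare_1$ there is a unit vector $w\in G$ with $\norm{Iw}\leq\vare_1(u_m)$; writing $w=P\xi$ we get $\norm\xi\geq\frac12$ (because $\norm P\leq 2$), and with $x:=B\xi\in E$, which satisfies $\norm x=\norm{B_0\xi}\geq\frac12\norm\xi$,
\[
\norm{T_Mx}\leq\tfrac1m\norm\xi+\norm{(\Pi_MR)Iw}\leq\tfrac1m\norm\xi+\vare_1(u_m),
\]
so that $\norm{T_Mx}/\norm x\leq\frac2m+\frac{2\vare_1(u_m)}{\norm\xi}\leq\frac2m+4\vare_1(u_m)<\vare$. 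In either case $E$ contains a vector $x$ with $\norm{T_Mx}<\vare\norm x$, and hence $T_M$ is finitely strictly singular.

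The one genuinely delicate point is that Lemma~\ref{lem:factoring-through-formal-id} is available only with domain $\ell_2^{u_m}$, while its error term is $\frac1m$, and $u_m$ is astronomically larger than $m$. Consequently one cannot afford to put an arbitrary $u_m$-dimensional subspace of $E$ into Euclidean position: the John distance $\sqrt{u_m}\gg m$ would make the error term dominate and the argument collapse. The remedy is to pay in the dimension threshold instead of in the isomorphism constant — Dvoretzky's theorem produces an honestly $2$-Euclidean copy of $\ell_2^{u_m}$ inside $E$ once $\dim E$ is large enough (the required bound on $\dim E$ is enormous, but this is irrelevant since $d$ only has to be finite for each $\vare$). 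After that, the lemma's factorization through a formal inclusion of ``size'' $s_m$ plus the finite strict singularity of that inclusion close the argument, and the rest is routine bookkeeping with the norm bounds $\norm B\leq 1$, $\norm{B_0^{-1}}\leq 2$, $\norm P\leq 2$, $\norm R\leq 1$ and the error $\frac1m$, the smallness of $\vare_1(u_m)$ coming from $u_m\to\infty$.
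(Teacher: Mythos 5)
Your proof is correct and follows essentially the same route as the paper: Dvoretzky gives a $2$-Euclidean $\ell_2^{u_m}$ copy below the first $m$ blocks of any high-dimensional $E$, Lemma~\ref{lem:factoring-through-formal-id} approximately factors $T_M$ on that copy through a size-$s_m$ formal inclusion, and finite strict singularity of that formal inclusion (equivalently, Milman's lemma) finishes the argument via the same $\ker P$/image-of-$P$ dichotomy. The only difference is cosmetic — you package the Milman bound as an abstract, uniform modulus $\vare_1(k)$ and apply it at dimension $u_m$, whereas the paper invokes Milman's result directly at dimension $m$ and tracks the explicit constant $m^{-1/q}$ with $q=\max\{2,p\}$.
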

\begin{proof}
  Fix an infinite set $M\subset\bn$ and $\vare>0$. We need to find
  $d\in\bn$ such that every subspace $E\subset U$ of dimension at
  least $d$ contains a vector $x$ with
  $\norm{T_Mx}<\vare\norm{x}$. Set $q=\max\{2,p\}$ and choose
  $m\in\bn$ with $m^{-1}+2m^{-1/q}<\vare/2$. Next choose a very large
  $d\in\bn$ so that every Banach space of dimension at least
  $d-(u_1+\dots+u_m)$ has a subspace $2$-isomorphic to
  $\ell_2^{u_m}$. Such a $d$ exists by Dvoretzky's theorem and depends
  only on $m$. We will show that this $d$ works.

  Let $E$ be an arbitrary $d$-dimensional subspace of $U$. Denote by
  $J$ the inclusion map from $E$ into $U$ and by $Q$ the canonical
  projection of $U$ onto
  $\big(\bigoplus_{i=1}^m\ell_2^{u_i}\big)_{\ell_p}$. Since the kernel
  of $QJ$ has dimension at least $d-(u_1+\dots+u_m)$, it contains a
  subspace $F$ that is $2$-isomorphic to $\ell_2^{u_m}$. Let $B$ be
  the restriction of $J$ to $F$. We need to show that for some $x\in
  F$ we have $\norm{T_MBx}<\vare\norm{x}$. It is clearly sufficient to
  show that if $N=\{n\in M:\,n>m\}$ and $B$ is \emph{any }operator
  from $\ell_2^{u_m}$ to
  $\big(\bigoplus_{n\in N} \ell_2^{u_n}\big)_{\ell_p}$ with
  $\norm{B}\leq 1$, then for some $x\in \ell_2^{u_m}$ we have
  $\norm{DBx}<\frac{\vare}{2} \norm{x}$, where
  $D=\diag(T_n)_{n\in N}$. At this point we can apply
  Lemma~\ref{lem:factoring-through-formal-id} to obtain the
  approximately commuting diagram
  \[
  \begin{diagram}[midshaft,l>=3em]
    \ell_2^{u_m} & \rTo^B & \Big(\bigoplus_{n\in N}
    \ell_2^{u_n}\Big)_{\ell_p} & \rTo^D & \Big(\bigoplus _{n\in N}
    \ell_\infty^{v_n}\Big)_{\co}\\
  \dTo^P &&&& \uTo_R \\
  \Big( \bigoplus_{n\in N} \ell_2^{J_n} \Big)_{\ell_p} && \rTo^I &&
  \Big( \bigoplus_{n\in N} \ell_\infty^{J_n} \Big)_{\co}
  \end{diagram}
  \]
  with $\norm{DB-RIP}\leq \frac{1}{m}$, where $I$ is the formal
  inclusion, $\norm{P}\leq 2$ and $\norm{R}\leq 1$.

  Now, if $x$ is a non-zero vector in $\ker P$, then $\norm{DBx}\leq
  \frac1{m}\norm{x}<\frac{\vare}2 \norm{x}$, and we are done. So we
  may assume that the image of $P$ has dimension at least $u_m$, and
  so at least $m$. A result of Milman~\cite{milman:70} (see
  also~\cite{sari-schlump-tomczak-troitsky:07}*{Lemma 3.4}) states
  that every $m$-dimensional subspace of $\co$ contains a non-zero
  vector which has at least $m$ co-ordinates of largest
  magnitude. Thus, there exists such a non-zero vector $y$ in the
  image of $P$. It follows that $\norm{y}\geq \norm{y}_{\ell_q}\geq
  m^{1/q} \norm{Iy}$. Choosing $x\in\ell_2^{u_m}$ with $Px=y$ we have
  \[
  \norm{DBx}\leq \frac1{m}\norm{x}+\norm{RIPx}\leq
  \frac1{m}\norm{x}+2m^{-1/q}\norm{x}<\frac{\vare}2 \norm{x}\ ,
  \]
  as required.
\end{proof}

\begin{proof}[Proof of Theorem~\ref{thm:l-p-to-co}]
  As mentioned earlier, we only need to prove~(i) as~(ii) is
  trivial. We shall construct a functional $\Phi$ in $\cL(U,V)^*$ that
  separates $T_M$ from $\cJ^{T_N}(U,V)$.
  
  For each $m\in\bn$ and $S\in\cL(U,V)$ define
  \[
  \Phi_m(S)=\tfrac{1}{v_m} \sum_{i=1}^{v_m}
  \bigip{Sg^{(m)}_i}{e^{(m)}_i}
  \]
  where $(e^{(m)}_i)_{i=1}^{v_m}$ denotes the unit vector basis of
  $\ell_1^{v_m}$ viewed in the obvious way as elements of
  $V^*\cong \big(\bigoplus_{n=1}^\infty
  \ell_1^{v_n}\big)_{\ell_1}$. Clearly, $\Phi_m\in\cL(U,V)^*$ with
  $\norm{\Phi_m}\leq 1$ for all $m\in\bn$, and $\Phi_m(T_M)=1$ for all
  $m\in M$. We are going to show that
  \begin{equation}
    \label{eq:kernel-of-phi}
    \lim_{m\to\infty,\ m\in M\setminus N} \Phi_m(AT_NB)=0 \qquad
    \text{for all }A\in\cL(V),\ B\in\cL(U)\ .
  \end{equation}
  It then follows that if $\Phi$ is a weak$^*$-accumulation point of
  $(\Phi_m)_{m\in M\setminus N}$ in $\cL(U,V)^*$, then $\Phi(T_M)=1$
  and $\cJ^{T_N}(U,V)\subset\ker \Phi$. The proof of the theorem is
  then complete.

  To see~\eqref{eq:kernel-of-phi}, let us fix $m\in M\setminus N$ and
  operators $A\in\cL(V)$ and $B\in\cL(U)$. We shall as we may assume
  that $\norm{A}\leq 1$ and $\norm{B}\leq 1$. We first observe that
  \begin{equation}
    \label{eq:ub-on-phi-m}
    \bigabs{\Phi_m(AT_NB)} = \Bigabs{\tfrac{1}{v_m} \sum_{i=1}^{v_m}
      \bigip{T_NBg^{(m)}_i}{A^*e^{(m)}_i}} \leq \tfrac1{v_m}
    \sum_{i=1}^{v_m} \bignorm{T_NBg^{(m)}_i}\ .
  \end{equation}
  We then split the right-hand side of~\eqref{eq:ub-on-phi-m} into two
  sums as follows. Let
  $n_0=\min \{n\in N:\, n>m\}$. Let $B^{(1)}$ be the restriction of
  $B$ to $\ell_2^{u_m}$ followed by the canonical projection of $U$
  onto $\big( \bigoplus _{n\in N, n < n_0} \ell_2^{u_n}
  \big)_{\ell_p}$. Similarly, let $B^{(2)}$ be the restriction of $B$
  to $\ell_2^{u_m}$ followed by the canonical projection of $U$ onto
  $\big( \bigoplus _{n\in N, n \geq n_0} \ell_2^{u_n}
  \big)_{\ell_p}$. We also let $D^{(1)}=\diag (T_n)_{n\in N, n<n_0}$
  and $D^{(2)}=\diag (T_n)_{n\in N, n\geq n_0}$.
  Continuing~\eqref{eq:ub-on-phi-m}, we next obtain
  \begin{equation}
    \label{eq:ub-on-phi-m-split}
      \bigabs{\Phi_m(AT_NB)} \leq  \tfrac1{v_m} \sum_{i=1}^{v_m}
      \bignorm{D^{(1)}B^{(1)}g^{(m)}_i}+\tfrac1{v_m} \sum_{i=1}^{v_m}
      \bignorm{D^{(2)}B^{(2)}g^{(m)}_i}\ .
  \end{equation}
  We shall now estimate the two terms above separately. Beginning with
  the first term, let us fix a $\frac1{3m}$-net $F$ in the unit ball
  of $\big( \bigoplus _{n\in N, n<n_0} \ell_2^{u_n}
  \big)_{\ell_p}$. Note that the dimension of this space is
  $\sum_{n\in N, n<n_0} u_n\leq u_1+u_2+\dots + u_{m-1}$. By standard
  volume estimate, we can find such an $F$ with $\abs{F}\leq
  (6m+1)^{u_1+\dots+u_{m-1}}$ by taking it to be a maximal
  $\frac1{3m}$-separated subset of the ball. Now, set $H=\big\{
  i:\,\bignorm{B^{(1)}g^{(m)}_i}>\frac1{m} \big\}$, and assume for a
  contradiction that $\abs{H}>\frac{v_m}{m}$. Then by the pigeon-hole
  principle we find $x\in F$ and $J\subset H$ such that $\abs{J}\geq
  \frac{v_m}{m\abs{F}}$ and $\bignorm{B^{(1)}g^{(m)}_i -x}\leq
  \frac1{3m}$ for all $i\in J$. It follows
  from~\eqref{eq:choice-of-u_n} that $\abs{J}\geq 19m^2$, and
  after replacing $J$ with a smaller set, we may in fact assume that
  $\abs{J}=19m^2$. It then follows by~\eqref{eq:almost-on} that
  \[
  \Bignorm{\sum_{i\in J} g^{(m)}_i}^2\leq 2\abs{J}\ .
  \]
  On the other hand, the choice of $J$ yields
  \[
  \Bignorm{\sum_{i\in J} B^{(1)}g^{(m)}_i}\geq \abs{J}\cdot
  \frac{2}{3m}-\abs{J}\cdot \frac{1}{3m}=\frac{\abs{J}}{3m}\ .
  \]
  The last two inequalities and the fact that
  $\norm{B^{(1)}}\leq\norm{B}\leq 1$ imply that $\abs{J}\leq 18m^2$
  which 
  is a contradiction. This shows that $\abs{H}\leq \frac{v_m}{m}$, and
  hence
  \begin{equation}
    \label{eq:ub-on-phi-m-1}
    \tfrac1{v_m} \sum_{i=1}^{v_m}\bignorm{D^{(1)}B^{(1)}g^{(m)}_i}
    \leq \frac{\abs{H}}{v_m}+\frac1{m}\leq \frac2{m}\ .
  \end{equation}
  To estimate the second term on the right-hand side
  of~\eqref{eq:ub-on-phi-m-split}, we shall first apply
  Lemma~\ref{lem:factoring-through-formal-id} with $B, D, N$ replaced
  by $B^{(2)}, D^{(2)}$ and $\{n\in N:\, n\geq n_0\}$, respectively, to
  obtain $J_n\subset\{1,\dots,v_n\}$, $n\in N, n\geq n_0$, with
  $\sum\abs{J_n}\leq s_m$, and an almost commuting diagram
  \[
  \begin{diagram}[midshaft,l>=3em]
    \ell_2^{u_m} & \rTo^{B^{(2)}} & \Big(\bigoplus_{n\in N, n\geq n_0}
    \ell_2^{u_n}\Big)_{\ell_p} & \rTo^{D^{(2)}} & \Big(\bigoplus
    _{n\in N, n\geq n_0} \ell_\infty^{v_n}\Big)_{\co}\\
  \dTo^P &&&& \uTo_R \\
  \Big( \bigoplus_{n\in N, n\geq n_0} \ell_2^{J_n} \Big)_{\ell_p} &&
  \rTo^I && \Big( \bigoplus_{n\in N, n\geq n_0} \ell_\infty^{J_n}
  \Big)_{\co}
  \end{diagram}
  \]
  with $\norm{D^{(2)}B^{(2)}-RIP}\leq \frac1{m}$, $\norm{P}\leq 2$ and
  $\norm{R}\leq 1$. From this we obtain
  \begin{equation}
    \label{eq:ub-on-phi-m-2}
    \tfrac1{v_m} \sum_{i=1}^{v_m}\bignorm{D^{(2)}B^{(2)}g^{(m)}_i}
    \leq \tfrac1{v_m} \sum_{i=1}^{v_m}\bignorm{IP g^{(m)}_i}
    +\frac1{m}\ .
  \end{equation}
  Let us set $H=\big\{i:\,\bignorm{IP g^{(m)}_i}>\frac1{m}\big\}$, and
  assume for a contradiction that $\abs{H}>\frac{v_m}{m}$. For each
  $i\in H$, there exist $n\in N, n\geq n_0$, and $j\in J_n$ with
  $\bigabs{\big[ Pg^{(m)}_i\big]_{n,j}}\geq \frac1{m}$, where
  $[y]_{n,j}$ denotes the $(n,j)$-coordinate of an element $y$
  of $\big( \bigoplus_{n\in N, n\geq n_0}\ell_2^{J_n}
  \big)_{\ell_p}$. Hence by pigeon-hole principle, there exist $n\in
  N, n\geq n_0$, $j\in J_n$ and $J\subset H$ with $\abs{J}\geq
  \frac{\abs{H}}{\sum\abs{J_n}}$ such that $\bigabs{\big[
      Pg^{(m)}_i\big]_{n,j}}\geq \frac1{m}$ for all $i\in J$. It
  follows from~\eqref{eq:choice-of-v_n} that
  $\abs{J}>\frac{v_m}{m\cdot s_m}\geq 9m^2$. After replacing $J$ by a
  smaller set, we may in fact assume that $\abs{J}=9m^2$. Hence
  by~\eqref{eq:almost-on} we have
  \[
  \Bignorm{\sum_{i\in J}\vare_ig^{(m)}_i}^2\leq 2\abs{J}
  \]
  where $\vare_i$ is the sign of $\big[ Pg^{(m)}_i\big]_{n,j}$ for
  each $i\in J$. On the other hand, by the choice of $J$ and since
  $\norm{P}\leq 2$, we get
  \[
  2 \Bignorm{\sum_{i\in J}\vare_ig^{(m)}_i}\geq
  \Bignorm{\sum_{i\in J}\vare_iPg^{(m)}_i}\geq 
  \sum_{i\in J}\vare_i \big[P g^{(m)}_i\big]_{n,j} \geq
  \abs{J}\cdot
  \frac1{m}\ .
  \]
  The last two inequalities yield $\abs{J}\leq 8m^2$ which is a
  contradiction. Hence $\abs{H}\leq \frac{v_m}{m}$ and
  \[
  \tfrac1{v_m} \sum_{i=1}^{v_m}\bignorm{IP g^{(m)}_i}\leq
  \frac{2\abs{H}}{v_m}+\frac1{m} \leq \frac3{m}\ .
  \]
  This inequality together with~\eqref{eq:ub-on-phi-m-2} yields the
  upper bound
  \[
  \tfrac1{v_m} \sum_{i=1}^{v_m}\bignorm{D^{(2)}B^{(2)}g^{(m)}_i}\leq
  \frac4{m}\ .
  \]
  Combining this with~\eqref{eq:ub-on-phi-m-1}
  and~\eqref{eq:ub-on-phi-m-split}, we finally obtain
  \[
  \bigabs{\Phi_m(AT_NB)} \leq \frac{6}{m}
  \]
  for all $m\in M\setminus N$. This completes the proof
  of~\eqref{eq:kernel-of-phi}.
\end{proof}

We conclude this section with a proof of
Theorem~\ref{thm:structure-of-ideals} which begins with a lemma.

\begin{lem}
  \label{lem:formal-id-factors-through-T_n}
  For each $m\in\bn$, the formal identity
  $I\colon\ell_2^m\to\ell_\infty^m$ factors through $T_n$ for all
  sufficiently large $n\in\bn$ via operators that are uniformly
  bounded.
\end{lem}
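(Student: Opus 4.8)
The plan is to show that for large $n$ the columns $g^{(n)}_1,\dots,g^{(n)}_{v_n}$ of the RIP matrix contain $m$ of them, say $g^{(n)}_{i_1},\dots,g^{(n)}_{i_m}$, that behave like an orthonormal basis of $\ell_2^m$ up to a universal constant, by \eqref{eq:almost-on}, and that the corresponding coordinate functionals $e^{(n)}_{i_1},\dots,e^{(n)}_{i_m}$ in $\ell_\infty^{v_n}$ are, up to composing with the canonical projection onto those $m$ coordinates, a copy of the unit vector basis of $\ell_\infty^m$. Concretely, I would define $B\colon\ell_2^m\to\ell_2^{u_n}$ by sending the $k$th unit vector to $g^{(n)}_{i_k}$, and $R\colon\ell_\infty^{v_n}\to\ell_\infty^m$ to be the coordinate projection onto $\{i_1,\dots,i_m\}$ (restricted appropriately), so that $R T_n B$ has $(k,l)$ entry $\ip{g^{(n)}_{i_l}}{g^{(n)}_{i_k}}$. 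The point is that $R T_n B$ is close to the formal identity $\ell_2^m\to\ell_\infty^m$ in the sense that it differs from $I$ by an operator supported on a small-dimensional piece; one then absorbs the error, or — cleaner — chooses the $i_k$ so that the relevant Gram matrix is exactly close enough that $I = R' T_n B'$ for slightly adjusted maps, using that $\ell_2^m\to\ell_\infty^m$ is finite-dimensional so any sufficiently good approximate factorization upgrades to an exact one (invert the near-identity).

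The key input for existence of such well-separated columns is the RIP condition \eqref{eq:almost-on}: for \emph{any} $J\subset\{1,\dots,v_n\}$ with $\abs J\leq 19n^2$ (and $m$ can be taken $\leq 19n^2$ once $n$ is large, since $v_n$ is huge by \eqref{eq:choice-of-v_n}) the vectors $(g^{(n)}_i)_{i\in J}$ are $2$-equivalent to the $\ell_2^{\abs J}$ basis. So just take $J=\{1,\dots,m\}$: then $B\colon e_k\mapsto g^{(n)}_k$ has $\norm B\leq\sqrt2$, its (left) inverse on the span has norm $\leq\sqrt2$, and $T_n B$ applied to a unit vector of $\ell_2^m$ lands in $\ell_\infty^{v_n}$ with the first $m$ coordinates equal to the Gram-matrix action and norm controlled by \eqref{eq:besselian}. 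Composing with the projection $R$ onto the first $m$ coordinates of $\ell_\infty^{v_n}$, which has norm $1$, we get $R T_n B\colon\ell_2^m\to\ell_\infty^m$ with matrix $\big(\ip{g^{(n)}_k}{g^{(n)}_l}\big)_{k,l=1}^m$. This matrix is within operator norm $O(1)$ of $\id$ and is invertible; in fact, writing $G$ for this $m\times m$ Gram matrix, we have $I_{\ell_2^m,\ell_\infty^m} = G^{-1}\cdot(R T_n B)$ after precomposing — but we want to avoid changing the left factor, so instead observe $I = (R T_n B)\cdot G^{-1}$ where $G^{-1}$ is viewed as an operator on $\ell_2^m$ with $\norm{G^{-1}}\leq 2$. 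Hence $I = R T_n (B G^{-1})$, an exact factorization through $T_n$ with $\norm R\leq1$ and $\norm{B G^{-1}}\leq 2\sqrt2$.

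The only thing to check for the statement as phrased is that this works \emph{for all sufficiently large $n$} with \emph{uniformly bounded} factoring operators, and that $m$ is allowed to be fixed independent of $n$. Both are immediate from the above: the bounds $\norm R\leq 1$ and $\norm{B G^{-1}}\leq 2\sqrt2$ do not depend on $n$, and we only needed $m\leq 19n^2$, which holds for all $n\geq\sqrt{m/19}$. I do not expect a real obstacle here; the one point requiring a line of care is that \eqref{eq:almost-on} is stated for real coefficients, so one should note $G$ is a real symmetric matrix with $\tfrac12\norm a_2^2\leq a^{\mathsf T}Ga\leq 2\norm a_2^2$ for all $a\in\br^m$, giving $\tfrac12\id\preceq G\preceq 2\id$ and hence $\norm{G^{-1}}\leq 2$, $\norm G\leq 2$, which is all that is used.
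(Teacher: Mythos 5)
Your argument is correct, and it is genuinely simpler than the paper's proof of the same lemma. The paper places the correcting operator \emph{after} $T_n$, on the $\ell_\infty^m$ side: they need a map $A\colon\ell_\infty^m\to\ell_\infty^m$ sending the $i$-th column $u_i$ of the Gram matrix $G$ to $e_i$, and for $\norm{A}$ to be bounded they must ensure $\sum_i\max_{j\neq i}\bigabs{\ip{g^{(n)}_i}{g^{(n)}_j}}<\tfrac12$. A generic choice of $m$ columns does not give this, so they run a first-moment/averaging argument over random $m$-subsets of the first $M=(s_{n-1}+1)\vee 19n^2$ columns, using~\eqref{eq:besselian} to bound the off-diagonal Hilbert--Schmidt norm of $G$ and then extracting a good $m$-subset. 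Your version sidesteps this entirely by putting the correction \emph{before} $T_n$, on the $\ell_2^m$ side: you invert $G$ as an operator on $\ell_2^m$, where~\eqref{eq:almost-on} immediately gives $\tfrac12\id\preceq G\preceq 2\id$ and hence $\norm{G^{-1}}_{\ell_2\to\ell_2}\leq 2$, so that $I=R\,T_n\,(BG^{-1})$ is an exact factorization with $\norm{R}\leq 1$ and $\norm{BG^{-1}}\leq 2\sqrt2$. No selection of columns and no use of~\eqref{eq:besselian} is needed; you only need $m\leq(s_{n-1}+1)\vee 19n^2$, which holds once $n\geq\sqrt{m/19}$. The one thing I would tighten in the write-up is the phrase ``within operator norm $O(1)$ of $\id$ and is invertible'': closeness to the identity in norm by itself does not imply invertibility, so state directly (as you do at the end) that $\tfrac12\id\preceq G\preceq 2\id$ from~\eqref{eq:almost-on}, which is what gives both invertibility and the norm bound on $G^{-1}$.
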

\begin{proof}
  Let us fix $m\in\bn$. Assume that $n\in\bn$ satisfies
  \[
  m\cdot \sqrt{\frac{m(m-1)}{M-1}} <\frac12\ ,\quad\text{where
  }M=(s_{n-1}+1)\vee 19n^2\ .
  \]
  Then by~\eqref{eq:besselian} we have
  \[
  \sum_{i,j=1}^M \bigabs{\bigip{g^{(n)}_i}{g^{(n)}_j}}^2 \leq 2M\ ,
  \]
  and hence
  \[
  \sum_{%
    \begin{subarray}{c}
      i,j =1\\[1pt]
      i\neq j
    \end{subarray}
  }^M \bigabs{\bigip{g^{(n)}_i}{g^{(n)}_j}}^2 \leq M\ .
  \]
  Next, let $S$ be a random (with respect to the uniform distribution)
  subset of $\{1,2,\dots,M\}$ of size
  $m$. Then for $i\neq j$ we have $\bp (i,j\in
  S)=\binom{M-2}{m-2}/\binom{M}{m}$, and thus
  \[
  \be \sum_{%
    \begin{subarray}{c}
      i,j \in S\\[1pt]
      i\neq j
    \end{subarray}
  } \bigabs{\bigip{g^{(n)}_i}{g^{(n)}_j}}^2 \leq M\cdot
  \binom{M-2}{m-2}\cdot \binom{M}{m}^{-1}=\frac{m(m-1)}{M-1}\ .
  \]
  Thus for some set $S$ the above inequality holds. After relabelling,
  we may assume that $S=\{1,2,\dots,m\}$. Let us now define
  $B\colon\ell_2^m\to\ell_2^{u_n}$ by $Be_i=g^{(n)}_i$ for $1\leq
  i\leq m$, where $(e_i)$ is the standard basis of
  $\br^m$. By~\eqref{eq:almost-on} we have $\norm{B}\leq 2$. Let
  $P\colon\ell_\infty^{v_n}\to\ell_\infty^m$ be the projection onto
  the first $m$ co-ordinates. Finally, for each $1\leq i\leq m$ set
  $u_i=\big( \ip{g^{(n)}_i}{g^{(n)}_j} \big)_{j=1}^m$. Observe that if
  $\bignorm{\sum_{i=1}^m\lambda_ie_i}_{\ell_\infty^m}=1$, then
  \begin{align*}
    \Bignorm{\sum_{i=1}^m \lambda_i (e_i-u_i)}_{\ell_\infty^m} &\leq
    \sum_{i=1}^m \norm{e_i-u_i}_{\ell_\infty^m} = \sum_{i=1}^m\max_{%
      \begin{subarray}{c}
        1\leq j\leq m\\[1pt]
        j\neq i
      \end{subarray}
    }\bigabs{\bigip{g^{(n)}_i}{g^{(n)}_j}}\\[2ex]
    &\leq m\cdot \sqrt{\frac{m(m-1)}{M-1}}<\frac12\ .
  \end{align*}
  It follows that there is a well-defined linear map $A\colon
  \ell_\infty^m\to\ell_\infty^m$ with $Au_i=e_i$ for
  all~$i$. Moreover, the above calculation shows that $\norm{A}\leq
  2$. We complete the proof by observing the factorization
  $I=APT_nB$.
\end{proof}

\begin{proof}[Proof of Theorem~\ref{thm:structure-of-ideals}]
  It follows from Theorem~\ref{thm:l-p-to-co} that if $[M]<[N]$
  for infinite subsets $M$ and $N$ of $\bn$, then $\cJ^{T_M}\subsetneq
  \cJ^{T_N}$. Next, recall that $I_{U,V}$ can be viewed
  as the diagonal
  operator whose $m^{\text{th}}$ diagonal entry is the formal identity
  $I\colon\ell_2^{u_m}\to\ell_\infty^{u_m}$. It then follows from
  Lemma~\ref{lem:formal-id-factors-through-T_n} that $I_{U,V}$
  factors through $T_M$ for every infinite subset $M$ of $\bn$. Thus,
  $\varf([M])<\varf([N])$ whenever $M$ is finite and $N$ is
  infinite. This completes the proof that $\varf$ is strictly
  monotonic.

  To show that $\varf$ preserves the join operation, it is of course
  sufficient to consider incomparable elements of $\fB$. In
  particular, it is enough to show that
  $\cJ^{T_M}\vee\cJ^{T_N}=\cJ^{T_{M\cup N}}$ for infinite sets $M$ and
  $N$. The left-to-right inclusion is clear, since both $T_M$ and
  $T_N$ trivially factor through $T_{M\cup N}$. Conversely,
  $T_{M\setminus N}$ factors through $T_M$, and so
  $T_{M\cup N}=T_{M\setminus N}+T_N\in
  \cJ^{T_M}+\cJ^{T_N}\subset\cJ^{T_M}\vee\cJ^{T_N}$. This shows the
  reverse inclusion, as required.

  It follows from the properties established so far that $\varf$ is
  injective. Finally, Corollary~\ref{cor:our-maps-are-fss} completes
  the proof of the theorem.
\end{proof}

\begin{rem}
  Some comments about the choice of $\varf([M])$, $M$ finite, are in
  order. We have $V=\co$ and, when $1<p<\infty$, we have $U\sim
  \ell_p$. However, the formal inclusion $I_{\ell_p,\co}$ is different
  from $I_{U,V}$. Since
  $I_{\ell_p,\co}$ factors through every non-compact operator in
  $\cL(\ell_p,\co)$, we have $\cJ^{I_{\ell_p,\co}}\subset
  \cJ^{I_{U,V}}$. When $2\leq p<\infty$ then $I_{U,V}$ factors through
  $I_{\ell_p,\co}$ via the formal inclusion from $U=\big(
  \bigoplus_{n=1}^\infty \ell_2^{u_n} \big)_{\ell_p}$ to $\ell_p=\big(
  \bigoplus_{n=1}^\infty \ell_p^{u_n} \big)_{\ell_p}$. It follows that
  in this case the ideals $\cJ^{I_{\ell_p,\co}}$ and $\cJ^{I_{U,V}}$
  are identical. However, for $1<p<2$, we have
  $\cJ^{I_{\ell_p,\co}}\subsetneq \cJ^{I_{U,V}}$, and hence our choice
  of $\varf([\emptyset])$ yields a stronger statement. We do
  not know whether $\cJ^{I_{U,V}}=\bigcap_{M\subset
    \bn,\ \abs{M}=\infty} \cJ^{T_M}$, or indeed whether $\varf$
  preserves the meet operation, \ie whether $\varf$ is a lattice
  isomorphism. To see that $I_{U,V}\notin \cJ^{I_{\ell_p,\co}}$ for
  $1<p<2$, define
  for each $m\in\bn$ a functional $\Psi_m$ on $\cL(U,V)$ by setting
  \[
  \Psi_m(S)=\tfrac{1}{u_m} \sum_{i=1}^{u_m}
  \bigip{Se^{(m)}_i}{e^{(m)*}_i}\qquad (S\in\cL(U,V))
  \]
  where $(e^{(m)}_i)_{i=1}^{u_m}$ is the unit
  vector basis of $\ell_2^{u_m}$ in $U=\big(\bigoplus_{n=1}^\infty
  \ell_2^{u_n}\big)_{\ell_p}$, and $(e^{(m)*}_i)_{i=1}^{u_m}$ is the
  unit vector basis of $\ell_1^{u_m}$ in
  $V^*=\big(\bigoplus_{n=1}^\infty \ell_1^{u_n} \big)_{\ell_1}$. It is
  easy to verify that $\Psi_m\in\cL(U,V)^*$ with $\norm{\Psi_m}\leq
  1$ and that $\Psi_m(I_{U,V})=1$ for all $m\in\bn$. On the other
  hand, given $A\in\cL(\co,V)$ and $B\in\cL(U,\ell_p)$ with
  $\norm{A}\leq 1$ and $\norm{B}\leq 1$, we have
  \[
  \bigabs{\Psi_m(AI_{\ell_p,\co}B)} \leq \tfrac1{u_m}
  \sum_{i=1}^{u_m} \bignorm{I_{\ell_p,\co}B_m e^{(m)}_i}
  \]
  where $B_m\colon \ell_2^{u_m}\to \ell_p$ is the restriction of $B$ to
  $\ell_2^{u_m}$. An application of~\cite{sz:15}*{Lemma 4} now
  gives $\Psi_m(AI_{\ell_p,\co}B)\to 0$ as $m\to\infty$. Thus, letting
  $\Psi$ be a weak$^*$-accumulation point of $(\Psi_m)$ in
  $\cL(U,V)^*$, we obtain $\Psi(I_{U,V})=1$ and
  $\cJ^{I_{\ell_p,\co}}\subset\ker \Psi$.
\end{rem}

\section{Closed ideals in $\cL(\ell_p,\ell_\infty)$ and
  $\cL(\ell_1,\ell_p)$}
\label{sec:secondary-results}

In this section we prove results analogous to
Theorems~\ref{thm:l-p-to-co} and~\ref{thm:structure-of-ideals}
for $\cL(\ell_p,\ell_\infty)$ and $\cL(\ell_1,\ell_p)$ in the range
$1<p<\infty$. In both cases a fairly straightforward modification of
the proof of Theorem~\ref{thm:l-p-to-co} will suffice. We shall also
present a duality argument that establishes some, but not all, of the
results for $\cL(\ell_1,\ell_p)$ directly from the results for
$\cL(\ell_p,\ell_\infty)$. Such a duality argument was used by
Sirotkin and Wallis in~\cite{sirotkin-wallis:16}, and we shall say
more about that in the next section.

Fix $1\leq p<\infty$ and set $U=\big(\bigoplus_{n=1}^\infty
\ell_2^{u_n} \big)_{\ell_p}$, $V=\big(\bigoplus _{n=1}^\infty
\ell_\infty^{v_n} \big)_{\co}$ and $W=\big(\bigoplus _{n=1}^\infty
\ell_\infty^{v_n} \big)_{\ell_\infty}$, where $(u_n)$ and $(v_n)$ are
sequences satisfying~\eqref{eq:defn-of-s_n}--\eqref{eq:besselian} on
page~\pageref{eq:besselian}. Similar to the previous
section, we will prove our first result for $\cL(U,W)$, which of
course is the same as working with $\cL(\ell_p,\ell_\infty)$ when
$1<p<\infty$. We shall also use the operators $T_M\colon U\to V$
defined in the previous section. Finally, we will denote by $J$ the
formal inclusion $I_{V,W}$ of $V$ into $W$.

\begin{thm}
  \label{thm:l-p-to-l_infty}
  Let $M$ and $N$ be infinite subsets of $\bn$.
  \begin{mylist}{(ii)}
  \item
    If $M\setminus N$ is infinite, then $J\circ T_M\notin
    \cJ^{T_N}(U,W)$.
  \item
    If $N\setminus M$ is finite, then $\cJ^{T_N}(U,W)\subset
    \cJ^{T_M}(U,W)$.
  \end{mylist}
  It follows that the map $\varf\colon\fB\to\fH(U,W)$ given
  by
  \[
  \varf([M])=%
  \begin{cases}
    \cJ^{T_M}(U,W) & \text{if $M$ is infinite, and}\\
    \cJ^{I_{U,V}}(U,W) & \text{if $M$ is finite,}
  \end{cases}
  \]
  is well defined, injective, monotone and preserves the join
  operation. In particular, $\cL(U,W)$ has continuum many closed
  ideals between $\cJ^{I_{U,V}}$ and $\fss$.
\end{thm}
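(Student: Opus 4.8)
The strategy is to repeat the proof of Theorem~\ref{thm:l-p-to-co} almost verbatim, with the $\co$-sum $V$ as target replaced by the $\ell_\infty$-sum $W$, and then to deduce the statements about $\varf$ as in the proof of Theorem~\ref{thm:structure-of-ideals}. Part~(ii) is trivial for the usual reason: if $N\setminus M$ is finite, then a finite-rank perturbation of $T_N$ (namely $T_{N\cap M}$) factors through $T_M$ via the coordinate projection of $V$ onto the blocks indexed by $N\cap M$, and every finite-rank operator $U\to W$ lies in $\cJ^{T_M}(U,W)$ (being routable through $T_M$, or because $\cK(U,W)$ is the smallest non-zero closed ideal); hence $\cJ^{T_N}(U,W)\subset\cJ^{T_M}(U,W)$.

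For part~(i), fix infinite sets $M,N\subset\bn$ with $M\setminus N$ infinite, and for $m\in\bn$ and $S\in\cL(U,W)$ set
\[
\Phi_m(S)=\tfrac1{v_m}\sum_{i=1}^{v_m}\bigip{Sg^{(m)}_i}{e^{(m)}_i}\ ,
\]
where $(e^{(m)}_i)_{i=1}^{v_m}$ are the coordinate functionals on the $m^{\text{th}}$ block of $W$, regarded as norm-one elements of $W^*$. Then $\Phi_m\in\cL(U,W)^*$ with $\norm{\Phi_m}\le 1$, and since $J\circ T_M$ acts on $g^{(m)}_i$ for $m\in M$ exactly as $T_m$ does, we get $\Phi_m(J\circ T_M)=1$ for every $m\in M$, in particular for every $m\in M\setminus N$. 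As in Theorem~\ref{thm:l-p-to-co}, it remains to prove that $\Phi_m(AT_NB)\to 0$ as $m\to\infty$ in $M\setminus N$ for all $A\in\cL(V,W)$ and $B\in\cL(U)$; any weak$^*$-accumulation point $\Phi$ of $(\Phi_m)_{m\in M\setminus N}$ then satisfies $\Phi(J\circ T_M)=1$ and $\cJ^{T_N}(U,W)\subset\ker\Phi$, completing the proof of~(i).

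The crucial point is that in the proof of Theorem~\ref{thm:l-p-to-co} the target entered only through the adjoint bound $\norm{A^*e^{(m)}_i}_{V^*}\le\norm{A}$: assuming $\norm{A}\le 1$ and $\norm{B}\le 1$ we have
\[
\bigabs{\Phi_m(AT_NB)}=\Bigabs{\tfrac1{v_m}\sum_{i=1}^{v_m}\bigip{T_NBg^{(m)}_i}{A^*e^{(m)}_i}}\le\tfrac1{v_m}\sum_{i=1}^{v_m}\bignorm{T_NBg^{(m)}_i}\ ,
\]
which is precisely~\eqref{eq:ub-on-phi-m}, and whose right-hand side no longer refers to $A$ or to the target at all. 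The remainder of that proof — splitting the sum at $n_0=\min\{n\in N:\,n>m\}$ into a low-block term bounded by a volume/pigeon-hole argument via~\eqref{eq:choice-of-u_n} and~\eqref{eq:almost-on}, and a high-block term bounded through Lemma~\ref{lem:factoring-through-formal-id} together with~\eqref{eq:choice-of-v_n} and~\eqref{eq:almost-on} — uses nothing but $\norm{B}\le 1$, so it carries over unchanged and yields $\abs{\Phi_m(AT_NB)}\le 6/m$ for $m\in M\setminus N$. The only thing one must actually check, then, is exactly this transfer of the estimate; since the argument only ever bounds $\Phi_m$ from above and the bound depends only on $\norm{B}$ and $v_m$, there is no real obstacle.

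Finally, the properties of $\varf$ follow as in the proof of Theorem~\ref{thm:structure-of-ideals}. By part~(ii), $\cJ^{T_M}(U,W)=\cJ^{T_N}(U,W)$ whenever $M\symdif N$ is finite, so $\varf$ is well defined, and it is monotone. It is strictly monotone: when $[M]<[N]$ with both infinite, part~(i) gives $J\circ T_N\in\cJ^{T_N}(U,W)\setminus\cJ^{T_M}(U,W)$, which combined with monotonicity yields $\cJ^{T_M}(U,W)\subsetneq\cJ^{T_N}(U,W)$; and when $M$ is finite and $N$ infinite, Lemma~\ref{lem:formal-id-factors-through-T_n} shows $I_{U,V}$ factors through $T_{N'}$ for every infinite $N'$ (so $\cJ^{I_{U,V}}(U,W)\subset\cJ^{T_{N'}}(U,W)$), and taking $N'\subset N$ with $N\setminus N'$ infinite and applying part~(i) to the pair $(N,N')$ gives $J\circ T_N\notin\cJ^{I_{U,V}}(U,W)$, whence $\cJ^{I_{U,V}}(U,W)\subsetneq\cJ^{T_N}(U,W)$. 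Preservation of joins reduces to $\cJ^{T_{M\cup N}}(U,W)=\cJ^{T_M}(U,W)\vee\cJ^{T_N}(U,W)$, where $\supset$ holds since $T_M,T_N$ factor through $T_{M\cup N}$, and $\subset$ since $T_{M\cup N}=T_M+T_{N\setminus M}$ with $T_{N\setminus M}$ factoring through $T_N$. Injectivity is then formal: if $\varf([M])=\varf([N])$ with $[M]\ne[N]$, we may assume $[M]$ and $[N]$ are incomparable, and then $[M]<[M]\vee[N]$ forces $\varf([M])<\varf([M]\vee[N])=\varf([M])$, a contradiction. Since $\abs{\fB}=\continuum$ and each $\cJ^{T_M}(U,W)$ with $M$ infinite lies between $\cJ^{I_{U,V}}(U,W)$ and $\fss(U,W)$ — the first inclusion by Lemma~\ref{lem:formal-id-factors-through-T_n}, the second because $T_M$ is finitely strictly singular by Corollary~\ref{cor:our-maps-are-fss} and $\fss$ is an operator ideal, so that $AT_MB$ is finitely strictly singular for all $A\in\cL(V,W)$ and $B\in\cL(U)$ — we conclude that $\cL(U,W)$ has continuum many closed ideals between $\cJ^{I_{U,V}}$ and $\fss$.
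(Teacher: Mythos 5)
Your proposal is correct and follows essentially the same route as the paper: define the functionals on $\cL(U,W)$ by pairing with norm-one extensions of the $V^*$-coordinate functionals to $W^*$, observe that the key estimate $\abs{\Phi_m(AT_NB)}\leq\frac1{v_m}\sum_i\lVert T_NBg^{(m)}_i\rVert$ is unchanged so the Theorem~\ref{thm:l-p-to-co} machinery transfers verbatim, and then deduce the lattice properties of $\varf$ as in Theorem~\ref{thm:structure-of-ideals}. The only cosmetic difference is that the paper phrases the extended functionals abstractly as Hahn--Banach extensions $f^{(m)}_i$ with $J^*f^{(m)}_i=e^{(m)}_i$, whereas you take the canonical coordinate functionals of $W^*$; these are exactly such extensions, so the two are the same argument.
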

\begin{proof}
  We follow the proof of Theorem~\ref{thm:l-p-to-co}. For $m\in\bn$
  and $S\in\cL(U,W)$ we define
  \[
  \Psi_m(S)=\tfrac{1}{v_m} \sum_{i=1}^{v_m}
  \bigip{Sg^{(m)}_i}{f^{(m)}_i}
  \]
  where $f^{(m)}_i\in W^*$ are chosen so that
  $\norm{f^{(m)}_i}=1$ and $J^*f^{(m)}_i=e^{(m)}_i$ for each~$i$. (In
  other words, $f^{(m)}_i$ is a Hahn--Banach extension of $e^{(m)}_i$
  to $W$.) We
  then have $\Psi_m\in\cL(U,W)^*$ with $\norm{\Psi_m}=1$ for
  all $m\in\bn$, and $\Psi_m(J\circ T_M)=\Phi_m(T_M)=1$ for all
  $m\in M$. The proof of~(i) will then be complete if we show that
  \begin{equation}
    \label{eq:kernel-of-psi}
    \lim_{m\to\infty,\ m\in M\setminus N} \Psi_m(AT_NB)=0 \qquad
    \text{for all }A\in\cL(V,W),\ B\in\cL(U)\ .
  \end{equation}
  To see this, fix operators $A\in\cL(V,W)$ and $B\in\cL(U)$, and
  assume without loss of generality that $\norm{A}\leq 1$ and
  $\norm{B}\leq 1$. We then have
  \[
  \bigabs{\Psi_m(AT_NB)} \leq \tfrac1{v_m} \sum_{i=1}^{v_m}
  \bignorm{T_NBg^{(m)}_i}\ .
  \]
  This is the analogue of inequality~\eqref{eq:ub-on-phi-m} from
  Theorem~\ref{thm:l-p-to-co}. Since the right-hand sides of these
  inequalities are identical, from here onwards we can simply follow
  the proof of Theorem~\ref{thm:l-p-to-co}. This then completes the
  proof of part~(i).

  Part~(ii) is again trivial, whereas the rest of the theorem is
  proved exactly as Theorem~\ref{thm:structure-of-ideals}.
\end{proof}

\begin{rem}
  The same argument works for any Banach space $W$ that contains a
  copy of $\co$ with $J$ being an isomorphic embedding of $V$ into
  $W$. We could also replace the ideals $\cJ^{T_M}$ with $\cJ^{J\circ
    T_M}$, and $\cJ^{I_{U,V}}$ with $\cJ^{I_{U,W}}$.
\end{rem}

We now turn to the duality argument. Given Banach spaces $X$ and $Y$,
for a subset $\cJ$ of $\cL(Y^*,X^*)$ we let $\cJ_*=\{
T\in\cL(X,Y):\,T^*\in\cJ\}$. The following result is straightforward
to verify.
\begin{prop}
  \label{prop:ideal-duality}
  Let $X$ and $Y$ be Banach spaces. The map $\cJ\mapsto \cJ_*$ is a
  monotone map from $\fH(Y^*,X^*)$ to $\fH(X,Y)$ that preserves the
  meet operation. Moreover, if $\cI\setminus\cJ$ contains a dual
  operator for some closed ideals $\cI$ and $\cJ$ in $\cL(Y^*,X^*)$,
  then $\cI_*\not\subset\cJ_*$.
\end{prop}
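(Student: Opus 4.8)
The plan is to verify each assertion by unwinding the definition $\cJ_* = \{T\in\cL(X,Y):\,T^*\in\cJ\}$ and invoking only the elementary functorial properties of the adjoint operation: linearity of $T\mapsto T^*$, the identity $(ATB)^* = B^*T^*A^*$, and the isometry $\norm{T^*} = \norm{T}$.

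First I would check that $\cJ_*$ is a closed ideal of $\cL(X,Y)$, so that the map is well defined with values in $\fH(X,Y)$. Linearity of $T\mapsto T^*$ shows $\cJ_*$ is a linear subspace, since $\cJ$ is one. For the ideal property, given $A\in\cL(Y)$, $T\in\cJ_*$ and $B\in\cL(X)$ one has $(ATB)^* = B^*T^*A^*$ with $B^*\in\cL(X^*)$, $A^*\in\cL(Y^*)$ and $T^*\in\cJ$; since $\cJ$ is an ideal of $\cL(Y^*,X^*)$, this composite lies in $\cJ$, hence $ATB\in\cJ_*$. Closedness follows because $T_n\to T$ in operator norm forces $T_n^*\to T^*$ in operator norm (again via $\norm{T_n^* - T^*} = \norm{T_n - T}$), and $\cJ$ is closed.

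Monotonicity and preservation of meets are then immediate from the definition. Since $T\in\cJ_*$ if and only if $T^*\in\cJ$, an inclusion $\cJ\subset\cI$ yields $\cJ_*\subset\cI_*$; and $T\in(\cI\cap\cJ)_*$ if and only if $T^*\in\cI$ and $T^*\in\cJ$, i.e.\ $T\in\cI_*\cap\cJ_*$, so $(\cI\wedge\cJ)_* = \cI_*\wedge\cJ_*$ because meets in both lattices are intersections. For the final claim, suppose $S\in\cI\setminus\cJ$ is a dual operator, say $S = T^*$ with $T\in\cL(X,Y)$. Then $T\in\cI_*$ since $T^* = S\in\cI$, while $T\notin\cJ_*$, for otherwise $S = T^*\in\cJ$, contrary to assumption. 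Hence $T\in\cI_*\setminus\cJ_*$ and $\cI_*\not\subset\cJ_*$.

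There is no genuine obstacle here; the only point requiring care is the bookkeeping with domains and codomains of the various adjoints --- in particular that pre-composition with $B$ on $X$ becomes post-composition with $B^*$ on $X^*$, and post-composition with $A$ on $Y$ becomes pre-composition with $A^*$ on $Y^*$ --- so that the ideal property of $\cJ$ in $\cL(Y^*,X^*)$ is applied with the factors on the correct sides. I would also remark that this duality map need \emph{not} preserve joins (and need not be surjective), which is precisely why it recovers only some of the results for $\cL(\ell_1,\ell_p)$ from those for $\cL(\ell_p,\ell_\infty)$.
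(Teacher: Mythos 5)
Your proof is correct and complete. The paper does not in fact give a proof of this proposition — it is introduced by the sentence ``The following result is straightforward to verify'' — and your argument is exactly the routine verification that is being alluded to: well-definedness of $\cJ_*$ via linearity, the identity $(ATB)^*=B^*T^*A^*$, and isometry of the adjoint; monotonicity and meet-preservation from $T\in\cJ_*\iff T^*\in\cJ$; and the separation claim by pulling a dual operator $S=T^*\in\cI\setminus\cJ$ back to $T\in\cI_*\setminus\cJ_*$. You have also correctly flagged the reason the paper needs the hypothesis that the separating operator be a dual operator (rather than concluding $\cI_*\neq\cJ_*$ from $\cI\neq\cJ$ alone), and your closing remark that the map need not preserve joins or be injective matches how the paper actually uses the proposition — only to transfer lower bounds on the number of ideals, with the sharper structural statements for $\cL(\ell_1,\ell_q)$ proved separately in Theorem~\ref{thm:l-1-to-l_p-again}.
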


Note that $U$ is the dual space of $U_*$, where $U_*=\big(
\bigoplus_{n=1}^\infty \ell_2^{u_n}\big)_{\co}$ when $p=1$, and $U_*=\big(
\bigoplus_{n=1}^\infty \ell_2^{u_n}\big)_{\ell_q}$ when $1<p<\infty$
and $q$ is the conjugate index to $p$. In the next result we shall
write $W_*$ for $\big( \bigoplus_{n=1}^\infty
\ell_1^{v_n}\big)_{\ell_1}\cong\ell_1$, being the predual of $W$. Of
course, we have $W_*\cong V^*$, and the formal inclusion $J=I_{V,W}$
is nothing else but the canonical embedding of $V$ into its bidual
$V^{**}\cong W$.

\begin{thm}
  \label{thm:l-1-to-l_p}
  Let $\varf$ be the map defined in
  Theorem~\ref{thm:l-p-to-l_infty}. Define
  $\psi\colon\fB\to\fH(W_*,U_*)$ by $\psi([M])=\varf([M])_*$ for every
  $M\subset\bn$. Then $\psi$ is injective and monotone. In particular,
  $\cL(W_*,U_*)$ contains continuum many closed ideals.
\end{thm}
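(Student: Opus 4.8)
The plan is to deduce Theorem~\ref{thm:l-1-to-l_p} from Theorem~\ref{thm:l-p-to-l_infty} via Proposition~\ref{prop:ideal-duality}, applied with $X=W_*$ and $Y=U_*$, so that $X^*=W$ and $Y^*=U$ (using $1<p<\infty$, so $U$ and $W$ are genuine dual spaces, and $W_*\cong\ell_1$, $U_*\sim\ell_q$). Monotonicity of $\psi$ is immediate: the map $\cJ\mapsto\cJ_*$ is monotone by Proposition~\ref{prop:ideal-duality}, and $\varf$ is monotone by Theorem~\ref{thm:l-p-to-l_infty}, so $\psi=(\,\cdot\,)_*\circ\varf$ is a composition of monotone maps. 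The work is therefore entirely in injectivity.

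For injectivity, suppose $[M]\neq[N]$ in $\fB$; without loss of generality $M\setminus N$ is infinite (otherwise swap the roles of $M$ and $N$, or pass to $M\cup N$ versus $N$, using that $\varf$ preserves joins and is monotone to reduce to a comparable pair). By Theorem~\ref{thm:l-p-to-l_infty}(i) we have $J\circ T_M\notin\cJ^{T_N}(U,W)=\varf([N])$, while trivially $J\circ T_M\in\varf([M])$ whenever $M$ is infinite (and $I_{U,V}\in\varf([M])$ factors through $J\circ T_M$ in the remaining bookkeeping cases, as in the proof of Theorem~\ref{thm:structure-of-ideals}). To invoke the last clause of Proposition~\ref{prop:ideal-duality}, I need the separating operator $J\circ T_M\in\varf([M])\setminus\varf([N])\subset\cL(U,W)=\cL(Y^*,X^*)$ to be a \emph{dual} operator, i.e.\ of the form $S^*$ for some $S\in\cL(W_*,U_*)=\cL(X,Y)$. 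The key point is that $J\circ T_M$ \emph{is} weak$^*$-to-weak$^*$ continuous: $T_M\colon U\to V$ has $n$th diagonal block $x\mapsto(\ip{x}{g^{(n)}_i})_{i=1}^{v_n}$ on $\ell_2^{u_n}$, which is visibly the adjoint of a finite-rank (hence weak$^*$-continuous) map, and $J=I_{V,W}$ is the canonical embedding $V\hookrightarrow V^{**}\cong W$, which is weak$^*$-to-weak$^*$ continuous by construction; composing block-diagonally and checking boundedness on the $\ell_p/\co$ sums shows $J\circ T_M$ is weak$^*$-continuous, hence equals $(\,(J\circ T_M)|_{\text{predual}}\,)^*$, a dual operator. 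Then Proposition~\ref{prop:ideal-duality} gives $\varf([M])_*\not\subset\varf([N])_*$, i.e.\ $\psi([M])\neq\psi([N])$.

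**The main obstacle** I anticipate is precisely the verification that $J\circ T_M$ is a dual operator — not because it is deep, but because it requires care with the various identifications ($U=(U_*)^*$, $W=(W_*)^*$, $V^{**}\cong W$, and the matching of predual bases) and one must confirm that the predual map $S\colon W_*\to U_*$ is bounded and that $S^*=J\circ T_M$ under all these isomorphisms. Concretely, $S$ should send the unit vector $e^{(n)}_i\in\ell_1^{v_n}\subset W_*$ to $g^{(n)}_i\in\ell_2^{u_n}\subset U_*$ for $n\in M$ and to $0$ for $n\notin M$; boundedness of this map on the relevant $\ell_1$/$\co$ (resp.\ $\ell_1$/$\ell_q$) direct sum follows from the Besselian-type estimate~\eqref{eq:besselian} and $\norm{T_n}=1$, essentially by transposing the computation $\norm{T_M}\leq 1$. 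Once this is in place, everything else is a direct citation of the two quoted results.

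**Finally**, the ``in particular'' claim — that $\cL(W_*,U_*)$ has continuum many closed ideals — follows since $\fB$ has cardinality $2^{\continuum}$ (in particular at least $\continuum$, indeed there are $\continuum$ many pairwise $\sim$-inequivalent infinite subsets of $\bn$), and $\psi$ is injective; alternatively, apply $\psi$ to any strictly $\subseteq$-increasing $\continuum$-chain in $\fB$ (e.g.\ indexed by Dedekind cuts of $\bq$, as in~\cite{sz:15}) to obtain a strictly increasing $\continuum$-chain of closed ideals, since $\psi$ is injective and monotone.
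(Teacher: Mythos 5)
Your proposal is correct and follows essentially the same route as the paper: deduce the result from Theorem~\ref{thm:l-p-to-l_infty} via Proposition~\ref{prop:ideal-duality} by exhibiting, for each incomparable pair $[M]\neq[N]$, a \emph{dual} operator separating $\varf([M])$ from $\varf([N])$, and that separating operator is $J\circ T_M$ (or $J\circ T_N$, after swapping). The paper simply defines $S_M=\diag(S_n^*)_{n\in\bn}\colon W_*\to U_*$ where $S_n=T_n$ for $n\in M$ and $S_n=0$ otherwise, and observes directly that $S_M^*=J\circ T_M$; your route via weak$^*$-to-weak$^*$ continuity leads to the same predual map $S$ (indeed $S_n^*e^{(n)}_i=g^{(n)}_i$, matching your explicit formula), and is a harmless repackaging. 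Two minor slips worth noting: $\fB$ has cardinality $\continuum=2^{\aleph_0}$, not $2^\continuum$ (your parenthetical correction is the right count), and the remark about ``$I_{U,V}\in\varf([M])$ factors through $J\circ T_M$'' in the finite-$M$ case is unnecessary and a bit garbled --- when exactly one of $M,N$ is finite, the set difference in the infinite direction is automatically infinite, so one just swaps roles and invokes case (i) directly.
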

\begin{proof}
  Fix an infinite set $M\subset\bn$. Note that $J\circ T_M$ is the
  diagonal operator $\diag(S_n)_{n\in\bn}\colon U\to W$ where
  $S_n=T_n$ when $n\in M$, and $S_n=0$ otherwise. Define $S_M$ to be
  the diagonal operator $\diag(S_n^*)_{n\in\bn}\colon W_*\to U_*$. It
  is then clear that $S_M^*=J\circ T_M$. In particular, $J\circ T_M$
  is a dual operator. The theorem now follows from
  Theorem~\ref{thm:l-p-to-l_infty} and
  Proposition~\ref{prop:ideal-duality}.
\end{proof}

We conclude this section by proving a slightly stronger result for
$\cL(W_*,U_*)$ (and hence for $\cL(\ell_1,\ell_q)$ for $1<q<\infty$)
by following the proof of Theorem~\ref{thm:l-p-to-co}. In this context
it will be more appropriate to write $V^*$ in place of $W_*$. We shall
use the notation established in the proof of
Theorem~\ref{thm:l-1-to-l_p} and denote by $S_M\colon V^*\to U_*$ the
diagonal operator with $S_M^*=J\circ T_M$ for an infinite subset
$M$ of $\bn$.

\begin{thm}
  \label{thm:l-1-to-l_p-again}
  Let $M$ and $N$ be infinite subsets of $\bn$.
  \begin{mylist}{(ii)}
  \item
    If $M\setminus N$ is infinite, then $S_M\notin
    \cJ^{S_N}(V^*,U_*)$.
  \item
    If $N\setminus M$ is finite, then $\cJ^{S_N}(V^*,U_*)\subset
    \cJ^{S_M}(V^*,U_*)$.
  \end{mylist}
  It follows that the map $\psi\colon\fB\to\fH(V^*,U_*)$ given
  by
  \[
  \psi([M])=%
  \begin{cases}
    \cJ^{S_M}(V^*,U_*) & \text{if $M$ is infinite, and}\\
    \cJ^{I_{V^*,U_*}}(V^*,U_*) & \text{if $M$ is finite,}
  \end{cases}
  \]
  is well defined, injective, monotone and preserves the join
  operation. In particular, $\cL(V^*,U_*)$ has continuum many closed
  ideals between $\cJ^{I_{V^*,U_*}}$ and $\fss$.
\end{thm}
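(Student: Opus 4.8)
The plan is to mirror the proof of Theorem~\ref{thm:l-p-to-co}, working directly in the spaces $V^*$ and $U_*$ rather than passing through the duality of Proposition~\ref{prop:ideal-duality}, since that proposition only yields injectivity and monotonicity but not join-preservation or the location below $\fss$. As before, part~(ii) is immediate: if $N\setminus M$ is finite then a finite-rank perturbation of $S_N$ factors through $S_M$, because $S_N$ and $S_M$ are diagonal operators $\diag(S_n^*)$ that agree outside a finite set. For part~(i) I would construct a norm-one functional $\Xi$ on $\cL(V^*,U_*)$ with $\Xi(S_M)=1$ and $\cJ^{S_N}(V^*,U_*)\subset\ker\Xi$. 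The natural candidate is obtained by dualizing the functionals $\Phi_m$ from Theorem~\ref{thm:l-p-to-co}: for $m\in\bn$ and $R\in\cL(V^*,U_*)$ set
\[
\Xi_m(R)=\tfrac{1}{v_m}\sum_{i=1}^{v_m}\bigip{R e^{(m)}_i}{g^{(m)}_i}\ ,
\]
where $(e^{(m)}_i)$ is the unit vector basis of $\ell_1^{v_m}$ inside $V^*=\big(\bigoplus\ell_1^{v_n}\big)_{\ell_1}$, and $g^{(m)}_i\in\ell_2^{u_m}\subset (U_*)^*$ (recall $(U_*)^*=U$, so the RIP vectors $g^{(m)}_i$ are naturally functionals on $U_*$). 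One checks $\norm{\Xi_m}\leq 1$ and $\Xi_m(S_M)=1$ for $m\in M$ (since $S_m^*=T_m$ preserves these vectors and $e^{(m)}_i$ is dual to $g^{(m)}_i$ under $T_m$). Then I would take $\Xi$ to be a weak$^*$-accumulation point of $(\Xi_m)_{m\in M\setminus N}$ and show
\[
\lim_{m\to\infty,\ m\in M\setminus N}\Xi_m(A S_N B)=0\qquad\text{for all }A\in\cL(U_*),\ B\in\cL(V^*)\ .
\]

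The key observation making this work is that $\Xi_m(AS_NB)$ can be rewritten using adjoints: $\bigip{AS_NBe^{(m)}_i}{g^{(m)}_i}=\bigip{S_N Be^{(m)}_i}{A^* g^{(m)}_i}$, and since $S_N=\diag(S_n^*)$ with $S_n^*=T_n$, the quantity $\norm{S_N B e^{(m)}_i}_{U_*}$ is governed by exactly the same RIP-vector estimates as in Theorem~\ref{thm:l-p-to-co}, with the roles of domain and range interchanged. More precisely, after the reduction $\norm{A},\norm{B}\leq 1$ we get $\abs{\Xi_m(AS_NB)}\leq \tfrac1{v_m}\sum_i\norm{S_N Be^{(m)}_i}$, and one splits $N$ at $n_0=\min\{n\in N:n>m\}$ into $N_{<n_0}$ and $N_{\geq n_0}$ just as before. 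The ``low'' part is handled by a volume/pigeonhole argument identical to the one producing~\eqref{eq:ub-on-phi-m-1}: if too many vectors $B e^{(m)}_i$ have large norm in $\big(\bigoplus_{n\in N,n<n_0}\ell_2^{u_n}\big)_{\co}$ (note this is a $\co$-sum now, being the relevant summand of $U_*$ when $1<p<\infty$), a $\tfrac1{3m}$-net plus~\eqref{eq:choice-of-u_n} and~\eqref{eq:almost-on} yields a contradiction with $\norm B\leq 1$. The ``high'' part needs the adjoint version of Lemma~\ref{lem:factoring-through-formal-id}: the restriction of $\diag(S_n^*)_{n\in N,n\geq n_0}=\diag(T_n)$ composed with $B$ (mapped into the appropriate summand) factors approximately through a formal inclusion $I\colon\big(\bigoplus\ell_1^{J_n}\big)_{\ell_1}\to\big(\bigoplus\ell_2^{J_n}\big)_{?}$ with $\sum\abs{J_n}\leq s_m$ — this is literally Lemma~\ref{lem:factoring-through-formal-id} applied to $B^*$ (or rather, re-derived with $V^*$-norms), and then another pigeonhole argument with~\eqref{eq:choice-of-v_n} and~\eqref{eq:almost-on} bounds the contribution by $O(1/m)$. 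Summing gives $\abs{\Xi_m(AS_NB)}\leq 6/m\to 0$.

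Once part~(i) and part~(ii) are in place, the map $\psi$ is well defined by part~(ii), strictly monotone by part~(i), and the join-preservation and injectivity follow exactly as in the proof of Theorem~\ref{thm:structure-of-ideals}: one needs that $I_{V^*,U_*}$ factors through $S_M$ for every infinite $M$ (the adjoint of Lemma~\ref{lem:formal-id-factors-through-T_n}, since the formal inclusion $\ell_1^m\to\ell_2^m$ is the adjoint of $\ell_2^m\to\ell_\infty^m$), and that $S_{M\setminus N}$ factors through $S_M$ so that $S_{M\cup N}=S_{M\setminus N}+S_N\in\cJ^{S_M}+\cJ^{S_N}$. The statement that the ideals lie below $\fss$ requires the adjoint of Corollary~\ref{cor:our-maps-are-fss}; here I would use that a bounded operator whose adjoint is finitely strictly singular is itself finitely strictly singular (an operator $T$ with $T^*$ finitely strictly singular is strictly singular, and on these classical sequence spaces $\cL=\cS$ forces... — in fact the cleanest route is: $S_M^*=J\circ T_M$ is finitely strictly singular by Corollary~\ref{cor:our-maps-are-fss} composed with the embedding $J$, and finite strict singularity passes to preadjoints between reflexive-or-$\ell_1$ spaces by a standard duality of the Bernstein widths). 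I expect the main obstacle to be precisely this last point — ensuring $S_M\in\fss$ — together with correctly bookkeeping which sequence space (the $\co$-sum versus $\ell_q$-sum) appears as the relevant summand of $U_*$ in the $p=1$ versus $1<p<\infty$ cases; but since the authors only claim the ``between $\cJ^{I_{V^*,U_*}}$ and $\fss$'' conclusion and have already shown $T_M$ is finitely strictly singular, the duality of finite strict singularity is the crux, and everything else is a transcription of the earlier arguments with domain and range swapped.
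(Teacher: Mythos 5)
Your overall setup matches the paper's proof --- the functionals are identical (the paper also defines $\Psi_m(S)=\tfrac1{v_m}\sum_i\bigip{Se^{(m)}_i}{g^{(m)}_i}$), parts~(i) and~(ii) are handled the same way, and the lattice argument at the end is the same. But you stop the adjoint manipulation one step short, and that is not cosmetic. You rewrite $\bigip{AS_NBe^{(m)}_i}{g^{(m)}_i}=\bigip{S_NBe^{(m)}_i}{A^*g^{(m)}_i}$ and then discard $A^*$, landing on a bound in terms of $\bignorm{S_NBe^{(m)}_i}_{U_*}$ that you plan to control by re-running the arguments of Theorem~\ref{thm:l-p-to-co} ``with domain and range interchanged.'' The paper instead continues: $\bigip{S_NBe^{(m)}_i}{A^*g^{(m)}_i}=\bigip{Be^{(m)}_i}{S_N^*A^*g^{(m)}_i}$, discards $B$ (since $\norm{Be^{(m)}_i}\leq 1$), and uses $S_N^*=J\circ T_N$ with $J$ an isometric embedding to obtain $\bigabs{\Psi_m(AS_NB)}\leq\tfrac1{v_m}\sum_i\bignorm{T_NA^*g^{(m)}_i}$. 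Since $A^*\in\cL(U)$, this is \emph{literally} the right-hand side of~\eqref{eq:ub-on-phi-m} with $B$ replaced by $A^*$, and the rest of the proof of Theorem~\ref{thm:l-p-to-co} applies verbatim --- no new lemmas, no role-swapping.

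This matters because the substitute you describe does not go through as stated. The pigeonhole in Theorem~\ref{thm:l-p-to-co} feeds the RIP vectors $g^{(m)}_i$ into $B^{(1)}$ and exploits~\eqref{eq:almost-on}: from a net point $x$ close to many $B^{(1)}g^{(m)}_i$ one gets $\bignorm{\sum_{i\in J}B^{(1)}g^{(m)}_i}\geq\abs{J}/(3m)$ against $\bignorm{\sum_{i\in J}g^{(m)}_i}\leq\sqrt{2\abs{J}}$, a contradiction once $\abs{J}\geq 19m^2$. In your version the inputs to $B$ are the basis vectors $e^{(m)}_i$ of $\ell_1^{v_m}$, which do \emph{not} satisfy~\eqref{eq:almost-on}: $\bignorm{\sum_{i\in J}e^{(m)}_i}_{\ell_1}=\abs{J}$, not $O(\sqrt{\abs{J}})$, so the analogous computation yields only $\abs{J}/(3m)\leq\norm{B}\abs{J}$, which is vacuous. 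The same problem infects your ``adjoint'' version of Lemma~\ref{lem:factoring-through-formal-id} and the high-part pigeonhole; you would need genuinely new estimates, whereas the paper's reduction makes all of this unnecessary.

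On the $\fss$ claim: do not lean on ``a standard duality of Bernstein widths.'' Finite strict singularity (superstrict singularity) is not self-dual; the dual notion is superstrict \emph{co}singularity, which is a different property. The paper sidesteps this entirely by proving Theorem~\ref{thm:all-ops-l_1-to-l_q-are-fss}: $\cL(X,Y)=\fss(X,Y)$ whenever $X$ does not contain uniformly complemented $\ell_2^n$'s and $Y$ has nontrivial type, so in particular every operator $\ell_1\to\ell_q$ with $1<q<\infty$ is finitely strictly singular. It then observes that $S_N\colon V^*\to U_*$ either lands in $U_*\sim\ell_q$ ($1<q<\infty$) or, when $U_*$ is a $\co$-sum, factors through a map $V^*\to\ell_2$, and applies that theorem. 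Your instinct that this last piece is where the real work lies is right; the route you sketch for it is the wrong one.
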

\begin{proof}
  For $m\in\bn$ consider $\Phi_m\in\cL(U,V)^*$ as defined at the start
  of the proof of Theorem~\ref{thm:l-p-to-co}. We also let
  \[
  \Psi_m(S)=\tfrac{1}{v_m} \sum_{i=1}^{v_m}
  \bigip{Se^{(m)}_i}{g^{(m)}_i}\qquad \text{for }S\in\cL(V^*,U_*)\ ,
  \]
  where $(e^{(m)}_i)_{i=1}^{v_m}$ is the unit vector basis of
  $\ell_1^{v_m}$ as in the proof of Theorem~\ref{thm:l-p-to-co}. It is
  clear that $\Psi_m\in\cL(V^*,U_*)^*$ with $\norm{\Psi_m}=1$. Since
  \[
  \bigip{S_Me^{(m)}_i}{g^{(m)}_i} =\bigip{e^{(m)}_i}{S_M^*g^{(m)}_i}
  =\bigip{e^{(m)}_i}{J\circ T_M g^{(m)}_i} =\bigip{T_M
    g^{(m)}_i}{e^{(m)}_i}\ ,
  \]
  it follows that $\Psi_m(S_M)=\Phi_m(T_M)=1$ for all $m\in M$. As in
  the proof of Theorem~\ref{thm:l-p-to-co} we will now show that
  \[
  \lim_{m\to\infty,\ m\in M\setminus N} \Psi_m(AS_NB)=0 \qquad
  \text{for all }A\in\cL(U_*),\ B\in\cL(V^*)\ .
  \]
  This will then complete the proof of part~(i) of
  Theorem~\ref{thm:l-1-to-l_p-again}. To see the above, we follow
  closely the proof of Theorem~\ref{thm:l-p-to-co}. We fix $m\in
  M\setminus N$ and assume that $\norm{A}\leq 1$ and $\norm{B}\leq
  1$. We then observe the following analogue
  of~\eqref{eq:ub-on-phi-m}.
  \[
  \bigabs{\Psi_m(AS_NB)}\leq \tfrac{1}{v_m} \sum_{i=1}^{v_m}
  \bignorm{S_N^*A^*g^{(m)}_i} = \tfrac{1}{v_m} \sum_{i=1}^{v_m}
  \bignorm{T_NA^*g^{(m)}_i}\ .
  \]
  Here $A^*$ is an element of $\cL(U)$, and hence this upper bound on
  $\abs{\Psi_m(AS_NB)}$ is of the same form as the right-hand side
  of~\eqref{eq:ub-on-phi-m}. Thus, the rest of the proof of
  Theorem~\ref{thm:l-p-to-co} can now be used to complete the proof
  of~(i).

  The rest of Theorem~\ref{thm:l-1-to-l_p-again} is proved exactly as
  Theorem~\ref{thm:structure-of-ideals} using the following
  observations. Firstly, dualizing
  Lemma~\ref{lem:formal-id-factors-through-T_n} shows that
  $I_{V^*,U_*}$ factors through $S_N$ for every infinite set
  $N\subset\bn$. Secondly, if $1<q<\infty$, then $U_*\sim \ell_q$,
  and if $q=\infty$ (or indeed, if $2\leq q\leq\infty$), then $S_N$
  factors through $\ell_2\cong \big(\bigoplus_{n=1}^\infty
  \ell_2^{u_n} \big)_{\ell_2}$ via $S_N$ viewed as a map to
  $V^*\to\ell_2$ followed by the formal inclusion of $\ell_2$ into
  $U_*$. In either case, we deduce that $S_N$ is finitely strictly
  singular from Theorem~\ref{thm:all-ops-l_1-to-l_q-are-fss} below.
\end{proof}

\begin{thm}
  \label{thm:all-ops-l_1-to-l_q-are-fss}
  Let $1<q<\infty$. Then  every operator $\ell_1\to\ell_q$ is finitely
  strictly singular. More generally, given Banach spaces $X$ and $Y$,
  if $X$ does not contain uniformly complemented and uniformly
  isomorphic copies of $\ell_2^n$, $n\in\bn$, and $Y$ has non-trivial
  type, then $\cL(X,Y)=\fss(X,Y)$.
\end{thm}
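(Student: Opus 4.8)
The plan is to prove the general statement, from which the special case $\cL(\ell_1,\ell_q)=\fss(\ell_1,\ell_q)$ for $1<q<\infty$ follows, since $\ell_1$ contains no complemented copies of $\ell_2^n$ with uniform bounds (indeed, a complemented subspace of $\ell_1$ isomorphic to $\ell_2^n$ would contradict the fact that finite-dimensional $\mathcal L_1$-spaces are, quantitatively, far from Euclidean — or one can argue via the cotype/type constants), while $\ell_q$ has type $\min\{2,q\}>1$. So assume $X$ has the property that for every $C\geq 1$ there is $n\in\bn$ with no subspace $C$-isomorphic to $\ell_2^n$ via a projection of norm $\leq C$, and assume $Y$ has type $r>1$ with type constant $T_r(Y)$.

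First I would suppose, for contradiction, that $S\colon X\to Y$ is bounded, $\norm{S}\leq 1$, but $S$ is \emph{not} finitely strictly singular. By definition this means there is $\vare>0$ such that for every $n$ there is an $n$-dimensional subspace $E_n\subset X$ on which $S$ is bounded below by $\vare$, i.e. $\norm{Sx}\geq\vare\norm{x}$ for all $x\in E_n$. The idea is that such a subspace $E_n$ is then forced to be close to Euclidean: the image $S(E_n)\subset Y$ is, up to the factor $\vare^{-1}$, isomorphic to $E_n$, and sitting inside a space of type $r$. The key quantitative input is the classical fact (due to Tomczak-Jaegermann, building on Kwapień's theorem) that an $n$-dimensional subspace $F$ of a space $Y$ with type $r$ and cotype $2$ constants controlled has Banach--Mazur distance to $\ell_2^n$ bounded in terms of those constants — but since we only assume type, not cotype, the right tool is the Maurey--Pisier / König--Tomczak-Jaegermann estimate, or more directly the \emph{volume ratio} argument, or even more simply: a space of type $r>1$ cannot contain $\ell_\infty^n$'s uniformly, and by a theorem of Krivine/Maurey--Pisier it contains $\ell_p^n$'s uniformly only for $p\leq r'$... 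Actually the cleanest route is via \emph{cotype}: no, type alone. Let me instead use the following: since $Y$ has type $r>1$, by the Maurey--Pisier theorem $Y$ does not contain $\ell_1^n$'s uniformly, hence (James) $\ell_\infty^n$'s are not uniformly in $Y$ either; more usefully, a subspace $F\subset Y$ of dimension $n$ satisfies $d(F,\ell_2^n)\cdot(\text{proj. const.})$ estimates via the fact that $Y$, having type $r$, has every $n$-dimensional subspace well-complemented-and-Euclidean only in the presence of cotype $2$. To avoid this gap I would instead run the argument through \emph{John's theorem plus type}: $E_n$ contains a further subspace which is well-Euclidean and well-complemented in $X$, giving the contradiction.

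Concretely, the main step is: given that $S\restrict E_n$ is an $\vare^{-1}$-isomorphism onto $F_n:=S(E_n)\subset Y$, I use that $Y$ has type $r>1$ to produce inside $F_n$ a subspace $G$ of dimension $\geq h(n)\to\infty$ with $d(G,\ell_2^{\dim G})\leq K$ and such that $G$ is $K$-complemented in $Y$, where $K=K(\vare,r,T_r(Y))$ does not depend on $n$. Pulling back along the isomorphism $S\restrict E_n$ gives a subspace $\widetilde G\subset E_n\subset X$ with $d(\widetilde G,\ell_2^{\dim\widetilde G})\leq K\vare^{-1}$, and composing the complementation projection in $Y$ with $S$ and the inverse of $S\restrict E_n$ yields a projection of $X$ onto $\widetilde G$ of norm $\leq K\vare^{-2}$. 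Letting $n\to\infty$, $\dim\widetilde G\to\infty$, so $X$ contains uniformly complemented uniformly Euclidean $\ell_2^m$'s, contradicting the hypothesis. The existence of the well-complemented Euclidean subspace $G$ inside any finite-dimensional subspace of a space of nontrivial type is exactly the content of the finite-dimensional version of results of Figiel--Lindenstrauss--Milman / Tomczak-Jaegermann on Euclidean sections and projections; alternatively, since only type (not cotype) is assumed, one invokes that a space of type $r>1$ has the property that its $n$-dimensional subspaces all admit Euclidean subspaces of proportional — in fact almost full, by Milman's quotient-of-subspace — dimension that are well-complemented.

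The step I expect to be the main obstacle is precisely making rigorous the claim ``a finite-dimensional subspace of a nontrivial-type space contains a well-complemented, well-Euclidean subspace of dimension tending to infinity with the ambient dimension.'' If one only cites Dvoretzky one gets Euclidean subspaces but not their complementation; the complementation needs something like the QS-theorem (quotient of subspace) of Milman, or Kwapień's theorem applied after noting that type $r>1$ together with the trivial cotype $2$ of any finite-dimensional space gives, on proportional subspaces, bounded distance to Euclidean \emph{and} bounded projection constant. I would structure the writeup so that this is isolated as a cited lemma (Milman's QS-theorem, or Bourgain--Milman / Figiel--Lindenstrauss--Milman), with the type constant $T_r(Y)$ as the only quantitative parameter entering, and everything else — the pullback of the isomorphism, the transport of the projection through $S$ — being the routine bookkeeping that I would state but not belabour. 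The special case $X=\ell_1$, $Y=\ell_q$ then follows immediately once one records that $\ell_1$ admits no uniformly complemented Euclidean subspaces (its finite-dimensional complemented subspaces are uniformly close to $\ell_1^k$ by a theorem of Pe{\l}czy\'nski--Rosenthal, and $d(\ell_1^k,\ell_2^k)=\sqrt k$) and that $\ell_q$ has type $\min\{2,q\}>1$.
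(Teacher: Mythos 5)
Your proposal is correct and follows essentially the same line as the paper: suppose $S$ is not finitely strictly singular, get $E_n\subset X$ with $\dim E_n\to\infty$ on which $S$ is bounded below by $\vare$, extract from $S(E_n)\subset Y$ a large well-complemented well-Euclidean subspace, pull it back through $S$, and contradict the hypothesis on $X$. The step you correctly isolate as the crux --- that any $N(k)$-dimensional subspace of a space with non-trivial type contains a $\lambda$-complemented subspace $2$-isomorphic to $\ell_2^k$ --- is precisely Pe{\l}czy\'nski--Rosenthal's notion of \emph{locally $\pi$-euclidean}, and the paper gets it cleanly via two citations: Pisier's theorem that non-trivial type is equivalent to $K$-convexity, and Figiel--Tomczak-Jaegermann's theorem that $K$-convex spaces are locally $\pi$-euclidean. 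Of the alternative routes you float, a couple do not work as stated: invoking Kwapi\'en after noting that ``every finite-dimensional space has cotype $2$'' fails because that cotype-$2$ constant is not uniform over the $E_n$ (it grows with the dimension), and Milman's QS theorem on its own produces quotients of subspaces rather than complemented subspaces, so neither directly delivers the projection you need. The Figiel--Lindenstrauss--Milman / Tomczak-Jaegermann reference you tentatively land on is indeed the right neighbourhood, but the precise statement to cite is Figiel--Tomczak-Jaegermann together with Pisier's $K$-convexity characterization. Once that lemma is in hand, the bookkeeping you outline (pulling back the isomorphism and transporting the projection, giving constants depending only on $\vare$ and the type constant of $Y$) is exactly what the paper does, and the specialization $X=\ell_1$, $Y=\ell_q$ is handled the way you suggest.
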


\begin{proof}
  We begin by recalling some results from the local theory of Banach
  spaces. A Banach space $Z$ is \emph{locally $\pi$-euclidean }if
  there is a constant $\lambda$ and a function $k\mapsto N(k)$ such
  that every subspace of $Z$ of dimension $N(k)$ contains a further
  subspace $2$-isomorphic to $\ell_2^k$ and $\lambda$-complemented in
  $Z$. This notion was introduced by Pe{\l}czy{\'n}ski and Rosenthal
  in~\cite{pelczynski-rosenthal:75}.

  Having non-trival type is equivalent to not containing
  $\ell_1^n$ uniformly (a result of Maurey and
  Pisier~\cite{maurey-pisier:76}), which in turn is equivalent to
  $K$-convexity (due to Pisier~\cite{pisier:82}*{Theorem~2.1}). Figiel
  and Tomczak-Jaegermann proved that a $K$-convex Banach space is
  locally $\pi$-euclidean~\cite{figiel-tomczak:79}. Thus in
  particular the space $Y$ in our theorem is locally $\pi$-euclidean.

  Let us now assume that $T\in\cL(X,Y)$ is not finitely strictly
  singular. Then there is an $\vare>0$ and a sequence of
  finite-dimensional subspaces $E_n$ of $X$ such that $\norm{Tx}\geq
  \vare\norm{x}$ for all $n\in\bn$ and for all $x\in E_n$, and such
  that $\dim E_n\to\infty$. Since $Y$ is locally $\pi$-euclidean, we
  may assume after passing to a subsequence and replacing the $E_n$ by
  suitable subspaces that $T(E_n)$ is $2$-isomorphic to $\ell_2^n$ and
  $\lambda$-complemented in $Y$ for some constant $\lambda$. It
  follows that $E_n$ is $C$-isomorphic to $\ell_2^n$ and
  $C$-complemented in $X$, where
  $C=\max\{2,\lambda\}\cdot\norm{T}/\vare$. This contradicts the
  assumption on $X$.
\end{proof}

\section{Further results, remarks and open problems}
\label{sec:remarks}

The main result of this section, Theorem~\ref{thm:large-ideals} below,
is concerned with the structure of ``large'' ideals in
$\cL(\ell_p,\co)$ for $1<p<\infty$. By ``large'' ideal we mean ideals
containing operators that are not finitely strictly singular. To place
Theorem~\ref{thm:large-ideals} into context, we shall begin with a
brief sketch of the proof of Sirotkin and
Wallis~\cite{sirotkin-wallis:16} that
$\cL(\ell_1,\ell_q)$, $1<q\leq\infty$, $\cL(\ell_1,\co)$, and
$\cL(\ell_p,\ell_\infty)$, $1\leq p<\infty$, contain uncountable many
closed ideals. We shall not define all the terms used and refer the
reader to~\cite{sirotkin-wallis:16} and to the work of Beanland and
Freeman~\cite{beanland-freeman:11} whose results play an important
r\^ole.

Fix $1\leq p<\infty$ and let $q$ be the conjugate index of $p$. For an
ordinal $\alpha$, $0<\alpha<\omega_1$, let $T_\alpha$ be the
$p$-convexified Tsirelson space of order $\alpha$. Recall that
$T_\alpha$ is a reflexive sequence space not containing any copy of
$\ell_r$, $1\leq r<\infty$, or $\co$. Its unit vector basis $(t_i)$ is
dominated by the unit vector basis $(e_i)$ of $\ell_p$, and moreover,
$(t_i)_{i\in F}$ uniformly dominates $(e_i)_{i\in F}$ for every
Schreier-$\alpha$ set $F$.

Let us denote by $J_\alpha$ the formal embedding of $\ell_p$ into
$T_\alpha$, which is the dual operator of the formal
embedding $I_\alpha$ of $T_\alpha^*$ into $\ell_q$ when $1<p<\infty$,
and into $\co$ when $p=1$. Fix a quotient map $Q_\alpha\colon\ell_1\to
T_\alpha^*$ and set $S_\alpha=I_\alpha\circ Q_\alpha$. Note that
$S_\alpha^*=Q_\alpha^*\circ J_\alpha$. Since $Q_\alpha^*$ is an
isomorphic embedding, it follows that $S_\alpha^*$ is an isomorphism
on $(e_i)_{i\in F}$ for every Schreier-$\alpha$ set $F$.
We shall now consider the closed ideals
$\cJ_\alpha=\cJ^{S_\alpha^*}(\ell_p,\ell_\infty)$.

Beanland and Freeman~\cite{beanland-freeman:11} introduced an
ordinal-index which for a strictly singular operator $T$ from $\ell_p$
into an arbitrary Banach space quantifies the failure of $T$ to
preserve a copy of $\ell_p$. Using methods of Descriptive Set Theory,
they proved that for such an operator $T$, there is a countable
ordinal $\alpha$ such that $T$ fails to preserve $\ell_p$ specifically
on Schreier-$\alpha$ sets. This means that for every bounded sequence
$(x_n)$ in $\ell_p$ and for any $\vare>0$ there is a Schreier-$\alpha$
set $F$ and scalars $(a_i)_{i\in F}$ such that
\[
\Bignorm{\sum_{i\in F}a_i Tx_i} <\vare \Bignorm{\sum_{i\in F} a_i
  x_i}\ .
\]
Let us temporarily say that such an operator $T$ is
\emph{$\alpha$-singular}. Although operators with this property do not
form an ideal, every operator in the closed ideal generated by them is
$\alpha\omega$-singular.

Let us now return to the closed ideals $\cJ_\alpha$ of
$\cL(\ell_p,\ell_\infty)$. Since $T_\alpha$ contains no copy of
$\ell_p$, it follows from the result of Beanland--Freeman that
$S_\alpha^*$ is $\beta$-singular for some countable ordinal
$\beta$. Set $\gamma=\beta\omega$. Since $S_\gamma^*$ is an
isomorphism on $(e_i)_{i\in F}$ for every Schreier-$\gamma$ set $F$,
it follows that $S_\gamma^*$ is not $\gamma$-singular, and hence
$S_\gamma^*\notin \cJ_\alpha$. This shows that there is an
$\omega_1$-sequence of countable ordinals $\alpha$ for which the
closed ideals $\cJ_\alpha$ are distinct, and moreover they are
distingished by dual operators. Hence by
Proposition~\ref{prop:ideal-duality}, the corresponding closed ideals
$\big(\cJ_\alpha\big)_*$ of $\cL(\ell_1,\ell_q)$ when $1<p<\infty$,
and of $\cL(\ell_1,\co)$ when $p=1$, are also distinct. We remark that
the closed ideals constructed by Sirotkin and Wallis are slightly
different, but the proof here is essentially a streamlined version of
their proof.

Let us emphasize the key aspects of the above argument. Firstly, all
operators in $\cJ_\alpha$ are $\gamma$-singular for a sufficiently
large $\gamma$. On the other hand, $J_\gamma$ is not
$\gamma$-singular, as it is isomorphic on $(e_i)_{i\in F}$ for every
Schreier-$\gamma$ set $F$. Since $Q_\gamma^*$ is an isomorphic
embedding, this property is inherited by $S_\gamma^*$ which therefore
does not belong to $\cJ_\alpha$. Since no operator in
$\cL(\ell_p,\co)$ can be uniformly isomorphic on $(e_i)_{i\in F}$
even for the class of Schreier-$1$ sets $F$, this argument cannot be
used to construct infinitely many closed ideals in $\cL(\ell_p,\co)$
for $1<p<\infty$. Moreover, one cannot use duality via
Proposition~\ref{prop:ideal-duality} either, as the $S_\alpha$ are not
dual operators. Recall that Wallis~\cite{wallis:15} observed that the
method of~\cite{sz:15} extends to yield continuum many closed ideals
in $\cL(\ell_p,\co)$ for $1<p<2$, but this approach does not seem to
go beyond this range of values for~$p$ either.
The main result of this section shows that there is
another obstruction. Theorem~\ref{thm:large-ideals} below
shows that at least for $p=2$ there are no proper closed ideals in
$\cL(\ell_p,\co)$ containing an operator that is not finitely strictly
singular. In other words, there are no proper large ideals. We begin
with a simple lemma.

\begin{lem}
  \label{lem:factoring-through-embedding}
  Let $E$ be a finite-dimensional Banach space and let $J\colon E\to
  \ell_\infty^m$ be an isomorphic embedding with $\norm{x}\leq
  \norm{Jx}$ for all $x\in E$. Then every operator $T\colon
  E\to\ell_\infty^n$ factors through $J$: more precisely, there is an
  operator  $A\colon\ell_\infty^m\to\ell_\infty^n$ such that
  $T=A\circ J$ and $\norm{A}\leq \norm{T}$.
\end{lem}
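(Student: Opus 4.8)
The plan is to exploit the fact that $\ell_\infty^n$ is $1$-injective: any operator of norm $\leq c$ from a subspace of an arbitrary Banach space into $\ell_\infty^n$ extends to an operator of norm $\leq c$ on the whole space. Concretely, set $F=J(E)\subseteq\ell_\infty^m$. Since $J$ is an isomorphic embedding it has a well-defined linear inverse $J^{-1}\colon F\to E$, and the hypothesis $\norm{x}\leq\norm{Jx}$ for all $x\in E$ says precisely that $\norm{J^{-1}}\leq 1$. Hence the operator $S:=T\circ J^{-1}\colon F\to\ell_\infty^n$ satisfies $\norm{S}\leq\norm{T}\cdot\norm{J^{-1}}\leq\norm{T}$.

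It then remains to extend $S$ to all of $\ell_\infty^m$ without increasing its norm, after which the factorization is immediate: if $A\colon\ell_\infty^m\to\ell_\infty^n$ extends $S$, then $A\circ J=S\circ J=T\circ J^{-1}\circ J=T$. To produce $A$, write $S=(S_1,\dots,S_n)$ where $S_j=e_j^*\circ S$ and $e_1^*,\dots,e_n^*$ are the coordinate functionals on $\ell_\infty^n$; each $S_j$ is a scalar functional on $F$ with $\norm{S_j}\leq\norm{S}$. By the Hahn--Banach theorem choose extensions $\widetilde{S}_j$ of $S_j$ to $\ell_\infty^m$ with $\norm{\widetilde{S}_j}=\norm{S_j}$, and put $A=(\widetilde{S}_1,\dots,\widetilde{S}_n)\colon\ell_\infty^m\to\ell_\infty^n$. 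Then $\norm{A}\leq\max_{1\leq j\leq n}\norm{\widetilde{S}_j}\leq\norm{S}\leq\norm{T}$ and $A|_F=S$, as required.

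There is no real obstacle in this argument; the only point that needs a moment's care is the normalization — one must use the hypothesis $\norm{x}\leq\norm{Jx}$ to conclude $\norm{J^{-1}}\leq 1$, so that the final bound comes out as $\norm{A}\leq\norm{T}$ rather than with a spurious factor of $\norm{J^{-1}}$. Alternatively, one could simply cite the $1$-injectivity of $\ell_\infty^n$ as a standard fact and omit the coordinatewise Hahn--Banach step altogether.
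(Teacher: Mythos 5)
Your proof is correct, and it takes a genuinely different route from the paper's. You factor through the restriction $S=T\circ J^{-1}\colon J(E)\to\ell_\infty^n$, observe from $\norm{x}\leq\norm{Jx}$ that $\norm{J^{-1}}\leq 1$ so $\norm{S}\leq\norm{T}$, and then invoke the $1$-injectivity of $\ell_\infty^n$ (i.e.\ coordinatewise Hahn--Banach extension) to extend $S$ to an operator $A$ on all of $\ell_\infty^m$ of the same norm. The paper instead works on the dual side: writing $Jx=(\ip{x}{f_i})_i$ and $Tx=(\ip{x}{g_j})_j$, it uses the hypothesis $\norm{x}\leq\norm{Jx}$ together with the separation (geometric) form of Hahn--Banach and the bipolar theorem to show $B_{E^*}\subset\conv\{\pm f_i\}$, expresses each $g_j=\sum_i t_{i,j}f_i$ with $\sum_i\abs{t_{i,j}}\leq 1$, and defines $A$ explicitly by the matrix $(t_{i,j})$. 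Both arguments are, at bottom, applications of Hahn--Banach; yours buys a cleaner, more conceptual proof that would work verbatim for any $1$-injective target in place of $\ell_\infty^n$ (and does not need $E$ finite-dimensional), while the paper's has the merit of producing the factoring operator $A$ as a concrete substochastic matrix. Your norm bookkeeping ($\norm{A}\leq\max_j\norm{\widetilde S_j}\leq\norm{S}\leq\norm{T}$) is correct.
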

\begin{proof}
  We may assume that $\norm{T}\leq 1$. Then there are functionals
  $f_i, g_j\in E^*$ with $\norm{f_i}\leq \norm{J}$ and $\norm{g_j}\leq
  1$ such that
  \[
  Jx=\big( \ip{x}{f_i} \big)_{i=1}^m\quad\text{and}\quad Tx=\big(
  \ip{x}{g_j} \big)_{j=1}^n \quad\text{for all }x\in E\ .
  \]
  Since $\norm{x}\leq\norm{Jx}$ for all $x\in E$, it follows from the
  geometric Hahn--Banach theorem that the unit ball of $E^*$ is
  contained in $\conv\{ f_i:\, 1\leq i\leq m\}$. In particular, each
  $g_j$ can be expressed as a convex combination $g_j= \sum_{i=1}^m
  t_{i,j} f_i$. Define $A\colon \ell_\infty^m\to\ell_\infty^n$ by
  \[
  A \big((y_i)_{i=1}^m\big) = \Big(\sum_{i=1}^m t_{i,j}
  y_i\Big)_{j=1}^n\ .
  \]
  It is straightforward to verify that $\norm{A}\leq 1$ and $T=A\circ
  J$.
\end{proof}

From now on we fix $1<p<\infty$ and two diagonal operators $K$ and
$L$ defined as follows. For each $n\in\bn$ fix embeddings $K_n\colon
\ell_2^n\to\ell_\infty^{k_n}$ and $L_n\colon
\ell_p^n\to\ell_\infty^{m_n}$ satisfying
$\norm{x}\leq\norm{K_nx}\leq 2\norm{x}$ for all $x\in\ell_2^n$, and
$\norm{x}\leq\norm{L_nx}\leq 2\norm{x}$ for all $x\in\ell_p^n$, and
then set
$K=\diag(K_n)_{n\in\bn}\colon \big(\bigoplus_{n=1}^\infty \ell_2^n
\big)_{\ell_p} \to \big(\bigoplus_{n=1}^\infty \ell_\infty^{k_n}
\big)_{\co}$ and
$L=\diag(L_n)_{n\in\bn}\colon \big(\bigoplus_{n=1}^\infty \ell_p^n
\big)_{\ell_p} \to \big(\bigoplus_{n=1}^\infty \ell_\infty^{m_n}
\big)_{\co}$. Observe that by
Lemma~\ref{lem:factoring-through-embedding}, the closed ideals
$\cJ^K(X,Y)$ and $\cJ^L(X,Y)$ are independent of the particular choice
of embeddings $K_n$ and $L_n$ for any pair $(X,Y)$ of Banach spaces.

\begin{thm}
  \label{thm:large-ideals}
  Let $1<p<\infty$ and let $K,L$ be defined as above. Then
  \begin{mylist}{(ii)}
  \item
    $\cJ^L(\ell_p,\co)=\cL(\ell_p,\co)$, and
  \item
    if $T\in\cL(\ell_p,\co)\setminus\fss(\ell_p,\co)$, then
    $\cJ^K\subset\cJ^T$.
  \end{mylist}
  It follows that if $\cJ$ is a non-trivial, proper closed ideal of
  $\cL(\ell_p,\co)$, then either $\cJ=\cK$ or
  $\cJ^{I_{\ell_p,\co}}\subset\cJ\subset\fss$ or
  $\cJ^K\subset\cJ$. In particular, for $p=2$ we have the following:
  if $\cJ$ is a non-trivial, proper closed ideal of $\cL(\ell_2,\co)$,
  then either $\cJ=\cK$ or $\cJ^{I_{\ell_2,\co}}\subset\cJ\subset\fss$.
\end{thm}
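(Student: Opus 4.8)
The plan is to prove (i) and (ii) and then read off the displayed trichotomy; the special case $p=2$ will come for free, since there $K$ and $L$ generate the same closed ideal. Indeed, for $p=2$ both $K$ and $L$ are diagonal operators whose blocks are $2$-isomorphic embeddings of $\ell_2^n$ into an $\ell_\infty$-space, whose domain is $\big(\bigoplus_n\ell_2^n\big)_{\ell_2}\cong\ell_2$ and whose range is $\co$; by Lemma~\ref{lem:factoring-through-embedding} such an ideal is independent of the particular embeddings and of the blockings, so $\cJ^K=\cJ^L$ in $\cL(\ell_2,\co)$. Granting (i) and (ii), let $\cJ$ be a non-trivial proper closed ideal. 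As $\cK$ is the least non-zero closed ideal, $\cK\subseteq\cJ$, and if $\cJ=\cK$ we are done. Otherwise $\cJ$ contains a non-compact $T$, and since $I_{\ell_p,\co}$ factors through every non-compact operator in $\cL(\ell_p,\co)$ we get $\cJ^{I_{\ell_p,\co}}\subseteq\cJ^T\subseteq\cJ$; if in addition $\cJ\subseteq\fss$ we are in the middle case, and if not then $\cJ$ contains some $T'\notin\fss$, so (ii) gives $\cJ^K\subseteq\cJ^{T'}\subseteq\cJ$. When $p=2$ the last alternative is impossible for proper $\cJ$, because $\cJ^K=\cJ^L=\cL(\ell_2,\co)$ by (i) would force $\cJ=\cL(\ell_2,\co)$; hence for $p=2$ only the first two alternatives remain.

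For (ii), let $T\in\cL(\ell_p,\co)\setminus\fss$, so there are $\vare>0$ and subspaces $E_d\subseteq\ell_p$ with $\dim E_d\to\infty$ and $\norm{Tx}\ge\vare\norm{x}$ on $E_d$; the aim is $K\in\cJ^T$. First clean up the $E_d$: intersecting with the kernel of an initial-segment coordinate projection of $\ell_p$ and then perturbing the basis, pass to subspaces finitely supported on pairwise disjoint blocks of $\ell_p$, still with dimensions tending to infinity and $T$ bounded below by $\vare/2$; applying Dvoretzky's theorem as in the proof of Corollary~\ref{cor:our-maps-are-fss} and relabelling, obtain disjointly supported $F_n\subseteq\ell_p$ with $F_n$ $2$-isomorphic to $\ell_2^n$ and $T|_{F_n}$ a uniform isomorphism onto $G_n:=T(F_n)\subseteq\co$. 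A second, similar surgery carried out inside each $F_n$ arranges moreover that $G_n$ lies, up to an error $\delta_n$ with $\sum_n\delta_n$ small, in a coordinate-block of $\co$, with these blocks pairwise disjoint and with minimal coordinates tending to infinity. Let $\beta_n\colon\ell_2^n\to F_n$ be the isomorphisms. Since $\ell_\infty^{k_n}$ is $1$-injective, the operator $K_n\circ(T\beta_n)^{-1}$ on $G_n$ extends to one on $\co$ of norm at most a constant multiple of $\vare^{-1}$; pre-composing this extension with the coordinate projection onto the block carrying $G_n$ gives $\alpha_n\colon\co\to\ell_\infty^{k_n}$. Then $B:=\diag(\beta_n)$ is bounded because the $F_n$ are disjointly supported and the basis of $\ell_p$ is $1$-unconditional, and $A:=(\alpha_n)_n\colon\co\to\big(\bigoplus_n\ell_\infty^{k_n}\big)_{\co}$ is bounded because the $\alpha_n$ are uniformly bounded and read off pairwise disjoint, escaping blocks. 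A direct check shows $ATB=\diag(K_n)=K$ up to an operator whose norm is controlled by $\sum_n\delta_n$; letting $\sum_n\delta_n\to0$ gives $K\in\cJ^T$, hence $\cJ^K\subseteq\cJ^T$.

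The crux is (i): every $T\in\cL(\ell_p,\co)$ lies in $\cJ^L$. I would first note that $\cJ^L$ coincides with the closed ideal generated by all diagonal operators $\ell_p\to\co$ with finite-dimensional blocks: $L$ is one such operator, and conversely any $D=\diag(D_n)$ with $D_n\colon\ell_p^{c_n}\to\ell_\infty^{d_n}$ factors through $L$, because each $D_n$ factors through the embedding $L_j$ for a suitable $j\ge c_n$ by Lemma~\ref{lem:factoring-through-embedding} and the block factorizations reassemble into \emph{bounded} operators, the point being that the domain and range of $D$ are genuine $\ell_p$- and $\co$-sums of the blocks so no coordinate overlap occurs. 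It then remains to show that these diagonal operators generate all of $\cL(\ell_p,\co)$. For this I would normalise $\norm T\le1$, fix $\vare>0$, block the domain as $\ell_p=\big(\bigoplus_n\ell_p^{c_n}\big)_{\ell_p}$, and run a sliding-hump argument: since every coordinate-block projection $Q_j$ of $\co$ makes $Q_jT$ finite rank, $\norm{Q_jTR}\to0$ along coordinate-block projections $R$ of $\ell_p$ escaping to infinity, which lets one replace $T$, up to a controlled error, by an operator built from the local finite-rank pieces of $T$; each such piece maps a copy of $\ell_p^{c_n}$ into an $\ell_\infty$-space, hence factors cheaply through some $L_j$, and these factorizations are then reassembled into the outer operators $A$ and $B$. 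The main obstacle — and the reason the argument uses the blocks $\ell_p^n$ of $L$ rather than, say, the blocks $\ell_2^n$ of $K$ — is that the ranges of the local pieces of a general $T$ need not be disjointly supported in $\co$, so the naive reassembly is unbounded; overcoming this should require exploiting the strict singularity of $T$ (for example through the Milman-type dichotomy used in the proof of Corollary~\ref{cor:our-maps-are-fss}), together with the freedom to choose both blockings after seeing $T$, so that the overlapping tails are absorbed into a bounded post-composition on $\co$. Once $T$ is within $\vare$ of some $ALB$, the inclusion $T\in\cJ^L$ follows because $\cJ^L$ is closed, while $\cJ^L\subseteq\cL(\ell_p,\co)$ is trivial. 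I expect this reassembly step in (i) to be by far the most delicate point.
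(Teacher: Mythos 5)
Your plan for~(ii) and for deducing the trichotomy (and its $p=2$ specialization via $\cJ^K=\cJ^L$ by Lemma~\ref{lem:factoring-through-embedding}) matches the paper's argument. The paper's version of~(ii) is slightly cleaner: after a compact perturbation making every column of $T$ finitely supported, it arranges $E_n$ \emph{and} $T(E_n)$ to lie exactly in disjoint coordinate blocks, so there is no $\delta_n$ error to track; but your $\delta_n$-approximate version also works.

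The one genuine issue is your treatment of~(i). The reduction of an arbitrary $T\in\cL(\ell_p,\co)$ to a sum of two block-diagonal operators, which the paper dismisses as ``well known,'' does not involve strict singularity or the Milman dichotomy, and identifying those as the missing ingredient is a misconception. It is a routine gliding hump that needs only two facts: for each fixed $m$, $\norm{Q_mT(1-P_n)}\to 0$ as $n\to\infty$ (because $Q_mT$ is a finite-rank operator represented by finitely many functionals in $\ell_q$ and the basis of $\ell_p$, $1<p<\infty$, is shrinking); and, crucially, for each fixed $n$, $\norm{(1-Q_m)TP_n}\to 0$ as $m\to\infty$ (because $TP_n$ has relatively compact image, and compact subsets of $\co$ have equi-small tails). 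You have the first estimate but are missing the second, and it is exactly this second estimate that dissolves your worry about the ranges of the local pieces not being disjointly supported in $\co$: by interleaving the two estimates one gets $T\approx D_1+D_2$ with $D_1,D_2$ block diagonal with respect to two blockings of $\ell_p$ and $\co$ that can be chosen after seeing $T$ and with domain block sizes growing arbitrarily. Once that is done, your reassembly of block factorizations through $L$ via Lemma~\ref{lem:factoring-through-embedding} is correct and is the paper's argument.
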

\begin{proof}
  Let $T\in\cL(\ell_p,\co)$. It is well known that $T$ can be
  arbitrarily well approximated by the sum of two block-diagonal
  operators. So to show~(i), we may as well assume that
  $T=\diag(T_n)\colon\big(\bigoplus_{n=1}^\infty\ell_p^n\big)_{\ell_p}
  \to \big(\bigoplus_{n=1}^\infty\ell_\infty^{r_n}\big)_{\co}$
  for some integers $r_n$ and maps $T_n\colon
  \ell_p^n\to\ell_\infty^{r_n}$. By
  Lemma~\ref{lem:factoring-through-embedding}, there are operators
  $A_n\colon\ell_\infty^{m_n}\to\ell_\infty^{r_n}$ such that
  $\norm{A_n}\leq \norm{T_n}\leq\norm{T}$ and $T_n=A_n L_n$. Setting
  $A=\diag(A_n)_{n\in\bn}$, we obtain $T=A\circ L\in\cJ^L$, as
  required.

  Let us next consider $T\in\cL(\ell_p,\co)$ that is not finitely
  strictly singular. By standard basis arguments, after perturbing $T$
  by a compact (or even a nuclear) operator, we may assume that the
  columns of $T$ with respect to the canonical bases of $\ell_p$ and
  $\co$ are finite, and hence $T$ maps finitely supported vectors to
  finitely supported vectors. Since $T\notin\fss(\ell_p,\co)$, it
  follows that there
  exists $\vare>0$ and finite-dimensional subspaces $E_n$ of $\ell_p$
  with $\dim E_n\to \infty$ such that $\norm{Tx}\geq\vare\norm{x}$ for
  all $x\in E_n$ and for all $n\in\bn$. After perturbing the $E_n$ and
  replacing them by suitable subspaces, we may assume that for some
  $p_1<q_1<p_2<q_2<\dots$, we have $E_n$ and $T(E_n)$ are both
  contained in $\spn\{e_i:p_n\leq i\leq q_n\}$ for all
  $n\in\bn$. After passing to further subspaces of $E_n$, we may also
  assume that each $E_n$ is $2$-isomorphic to $\ell_2^n$ by
  Dvoretzky's theorem. Let us now fix for each $n\in\bn$ an
  isomorphism $J_n\colon \ell_2^n\to E_n$ satisfying
  $\frac1{\vare}\norm{x}\leq \norm{J_nx}\leq \frac2{\vare}\norm{x}$
  for all $x\in \ell_2^n$, and define $T_n$ to be the restriction of
  $T$ to $E_n$ viewed as a map $T_n\colon E_n\to \ell_\infty^{r_n}\cong
  \spn\{e_i:\,p_n\leq i\leq q_n\}$. Since
  $\norm{x}\leq \norm{T_n J_nx}$ for all $n\in\bn$ and
  $x\in\ell_2^n$, it follows
  from Lemma~\ref{lem:factoring-through-embedding} that the $K_n$ factor
  uniformly through $T_n J_n$. Hence $K$ factors through
  $\diag(T_nJ_n)\colon \big(\bigoplus_{n=1}^\infty \ell_2^n
  \big)_{\ell_p} \to \big( \bigoplus_{n=1}^\infty \ell_\infty^{r_n}
  \big)_{\co}$, and thus through $T$, as required.
 
  The rest of the theorem now follows.
\end{proof}

We conclude with some open problems. As already mentioned in the
Introduction, our method does not work for operators from $\ell_1$ to
$\co$. Sirotkin and Wallis~\cite{sirotkin-wallis:16} have shown that
$\cL(\ell_1,\co)$ contains uncountably many closed ideals, but the
following remains open.

\begin{problem}
  Does $\cL(\ell_1,\co)$ contain continuum many closed ideals?
\end{problem}

There are various other questions that arise naturally in this paper
but remain unanswered. We summarize these in the next two problems.

\begin{problem}
  Does the map $\varf$ in
  Theorem~\ref{thm:structure-of-ideals},~\ref{thm:l-p-to-l_infty}
  or~\ref{thm:l-1-to-l_p-again} preserve the meet operation?
\end{problem}
A positive answer would in particular imply that for disjoint infinite
subsets $M$ and $N$ of $\bn$ we have
$\cJ^{T_M}\cap\cJ^{T_N}=\cJ^{I_{U,V}}$, and thus the definition of
$\varf([\emptyset])$ is optimal.

\begin{problem}
  Does the inclusion $\fss\subset\cJ^K$ hold in
  Theorem~\ref{thm:large-ideals}?
\end{problem}
This would show that $\cJ\subset\fss\subset\cJ^K\subset \cJ'$ for all
``small'' ideals $\cJ$ and for all large ideals $\cJ'$, and would thus
shed further light on the lattice $\fH(\ell_p,\co)$.

Let us now turn attention to the remaining pairs of classical
sequence spaces that we have hitherto not mentioned. 

\begin{problem}
  What can be said about the lattice of closed ideals in
  $\cL(\ell_\infty,\ell_p)$, $1\leq p\leq \infty$, and in
  $\cL(\ell_\infty,\co)$? In particular, are they infinite?
\end{problem}
We note that the structure of $\fH(\co,\ell_\infty)$ on the other hand
is well understood and is as simple as possible.

\begin{thm}
  The compact operators are the only non-trivial proper closed ideal
  in $\cL(\co,\ell_\infty)$.
\end{thm}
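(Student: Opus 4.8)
The plan is to show that $\cJ^T(\co,\ell_\infty)=\cL(\co,\ell_\infty)$ for every non-compact $T\in\cL(\co,\ell_\infty)$; combined with the observations that $\cK(\co,\ell_\infty)$ coincides with the closure of the finite-rank operators, that it is a non-trivial proper closed ideal, and that it is contained in every non-zero closed ideal, this immediately gives the theorem. The whole argument rests on the fact that $\ell_\infty$ is $1$-injective, i.e., any bounded operator from a subspace of a Banach space into $\ell_\infty$ extends to the whole space with the same norm.

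First I would record that $\cJ^{I_{\co,\ell_\infty}}(\co,\ell_\infty)=\cL(\co,\ell_\infty)$: viewing $\co$ as a subspace of $\ell_\infty$, any $T\in\cL(\co,\ell_\infty)$ extends by injectivity to some $\hat T\in\cL(\ell_\infty)$, whence $T=\hat T\circ I_{\co,\ell_\infty}\in\cJ^{I_{\co,\ell_\infty}}$. So it is enough to prove that the formal inclusion $I_{\co,\ell_\infty}$ factors through every non-compact $T$.

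For this I would use the classical fact that every non-compact operator from $\co$ into an arbitrary Banach space is an isomorphism on some subspace isomorphic to $\co$. (Should it need reproving: writing $R_k$ for the tail projection $x\mapsto(0,\dots,0,x_{k+1},x_{k+2},\dots)$ on $\co$, one has $\norm{TR_k}\geq\delta>0$ for all $k$, since otherwise $T$ would be the norm limit of the finite-rank operators $T(\id-R_k)$; a gliding-hump argument — using that in $\co$ every vector is the norm limit of its finite truncations — then yields a seminormalized block basis $(v_j)$ of the unit vector basis with $\inf_j\norm{Tv_j}>0$. Such a $(v_j)$ is isometrically equivalent to the unit vector basis of $\co$, hence weakly null, so $(Tv_j)$ is weakly null and has a basic subsequence; on that subsequence $T$ furnishes an upper $\co$-estimate, while basicness together with $\inf_j\norm{Tv_j}>0$ furnishes the matching lower $\co$-estimate, so $(Tv_j)$ is $\co$-equivalent and $T$ is an isomorphism on $\cspn\{v_j\}$.) Fixing an isomorphism of $\co$ onto such a subspace $Y\subset\co$ and following it with the inclusion gives $B\in\cL(\co)$ with $TB\colon\co\to\ell_\infty$ an isomorphic embedding; extending $(TB)^{-1}\colon TB(\co)\to\co\subset\ell_\infty$ to $A\in\cL(\ell_\infty)$ by injectivity yields $ATB=I_{\co,\ell_\infty}$, so $I_{\co,\ell_\infty}\in\cJ^T(\co,\ell_\infty)$ and hence $\cJ^T(\co,\ell_\infty)=\cL(\co,\ell_\infty)$.

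It remains to assemble the pieces. Since $\co$ has a monotone basis, $\cK(\co,\ell_\infty)$ is the closure of the finite-rank operators (for compact $T$ one has $\norm{T(\id-P_k)}=\norm{(\id-P_k)^*T^*}\to 0$, as $T^*$ is compact and $(\id-P_k)^*\to0$ strongly on $\ell_1$, where $P_k$ are the basis projections). It is non-zero, since it contains the rank-one operators, and proper, since $I_{\co,\ell_\infty}$ is not compact. A routine rank-one factorization shows that every non-zero closed ideal contains all rank-one, hence all finite-rank, hence all compact operators; and if such an ideal contained a non-compact operator it would, by the above, equal $\cL(\co,\ell_\infty)$. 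Therefore $\cK(\co,\ell_\infty)$ is the only non-trivial proper closed ideal. The only genuinely substantive ingredient is the classical structure result that non-compact operators out of $\co$ fix a copy of $\co$; everything else is bookkeeping organised around the injectivity of $\ell_\infty$.
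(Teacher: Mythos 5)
Your proof is correct and follows essentially the same route as the paper: both rest on the classical fact that a non-compact operator out of $\co$ is an isomorphism on a copy of $\co$ spanned by a block basis, together with the injectivity of $\ell_\infty$ to build the left factor. The only difference is organizational — you interpose the step $\cJ^{I_{\co,\ell_\infty}}=\cL(\co,\ell_\infty)$ and then show $I_{\co,\ell_\infty}$ factors through $T$, whereas the paper directly defines the left factor on $T(Y)$ by sending $Tx_n\mapsto Se_n$ for an arbitrary $S$ and extends by injectivity — but this is cosmetic.
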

\begin{proof}
  We need to show that if $T\colon\co\to\ell_\infty$ is a non-compact
  operator, then it generates $\cL(\co,\ell_\infty)$. By standard basis
  arguments, we find a block subspace $Y$ in $\co$ spanned by a block
  sequence $(x_n)$ of the unit vector basis $(e_n)$ such that $T$ is
  an isomorphism on $Y$. Let $Z=T(Y)$, and let $S$ be an arbitrary
  operator in $\cL(\co,\ell_\infty)$. Define $B\colon\co\to\co$ by
  $Be_n=x_n$. Since $(Tx_n)$ is equivalent to $(e_n)$, we can define
  $A_1\colon Z\to\ell_\infty$ by $A_1 (Tx_n)=Se_n$. Finally, since
  $\ell_\infty$ is injective, $A_1$ extends to an operator
  $A\colon\ell_\infty\to\ell_\infty$. Note that $S=ATB$, and hence the
  proof is complete.
\end{proof}

Finally, it follows from the work of Bourgain, Rosenthal and
Schechtman that $L_p[0,1]$ for $1<p<\infty$, $p\neq 2$, has up to
isomorphism uncountably many complemented
subspaces~\cite{bourgain-rosenthal-schechtman}. It follows easily that
$\cL(L_p[0,1])$ has uncountably many closed ideals. In fact, although
it is not known whether $L_p[0,1]$ has continuum many complemented
subspaces, it follows from~\cite{sz:15} that $\cL(L_p[0,1])$ has
continuum many closed ideals. This is because $L_p[0,1]$ contains a
complemented copy of $\ell_p\oplus \ell_2$. It is a well known
unsolved conjecture that every complemented subspace of $L_1[0,1]$
is isomorphic either to $\ell_1$ or to $L_1[0,1]$. The following
related question is therefore of interest.

\begin{problem}
  Does $\cL(L_1[0,1])$ contain infinitely many closed ideals?
\end{problem}

\begin{bibdiv}
\begin{biblist}
 
\bib{bddw:08}{article}{
  author={Baraniuk, Richard},
  author={Davenport, Mark},
  author={DeVore, Ronald},
  author={Wakin, Michael},
  title={A simple proof of the restricted isometry property for random matrices},
  journal={Constr. Approx.},
  volume={28},
  date={2008},
  number={3},
  pages={253--263},
  issn={0176-4276},
  review={\MR {2453366}},
  doi={10.1007/s00365-007-9003-x},
}

\bib{beanland-freeman:11}{article}{
  author={Beanland, Kevin},
  author={Freeman, Daniel},
  title={Ordinal ranks on weakly compact and Rosenthal operators},
  journal={Extracta Math.},
  volume={26},
  date={2011},
  number={2},
  pages={173--194},
  issn={0213-8743},
  review={\MR {2977624}},
}

\bib{bourgain-rosenthal-schechtman}{article}{
  author={Bourgain, J.},
  author={Rosenthal, H. P.},
  author={Schechtman, G.},
  title={An ordinal $L^{p}$-index for Banach spaces, with application to complemented subspaces of $L^{p}$},
  journal={Ann. of Math. (2)},
  volume={114},
  date={1981},
  number={2},
  pages={193--228},
  issn={0003-486X},
  review={\MR {632839}},
  doi={10.2307/1971293},
}

\bib{calkin:41}{article}{
  author={Calkin, J. W.},
  title={Two-sided ideals and congruences in the ring of bounded operators in Hilbert space},
  journal={Ann. of Math. (2)},
  volume={42},
  date={1941},
  pages={839--873},
  issn={0003-486X},
  review={\MR {0005790 (3,208c)}},
}

\bib{candes-tao:05}{article}{
  author={Candes, Emmanuel J.},
  author={Tao, Terence},
  title={Decoding by linear programming},
  journal={IEEE Trans. Inform. Theory},
  volume={51},
  date={2005},
  number={12},
  pages={4203--4215},
  issn={0018-9448},
  review={\MR {2243152}},
  doi={10.1109/TIT.2005.858979},
}

\bib{czech-book}{book}{
  author={Fabian, Mari{\'a}n},
  author={Habala, Petr},
  author={H{\'a}jek, Petr},
  author={Montesinos Santaluc{\'{\i }}a, Vicente},
  author={Pelant, Jan},
  author={Zizler, V{\'a}clav},
  title={Functional analysis and infinite-dimensional geometry},
  series={CMS Books in Mathematics/Ouvrages de Math\'ematiques de la SMC, 8},
  publisher={Springer-Verlag, New York},
  date={2001},
  pages={x+451},
  isbn={0-387-95219-5},
  review={\MR {1831176}},
  doi={10.1007/978-1-4757-3480-5},
}

\bib{figiel-tomczak:79}{article}{
  author={Figiel, T.},
  author={Tomczak-Jaegermann, Nicole},
  title={Projections onto Hilbertian subspaces of Banach spaces},
  journal={Israel J. Math.},
  volume={33},
  date={1979},
  number={2},
  pages={155--171},
  issn={0021-2172},
  review={\MR {571251}},
  doi={10.1007/BF02760556},
}

\bib{foucart-rauhut:13}{book}{
  author={Foucart, Simon},
  author={Rauhut, Holger},
  title={A mathematical introduction to compressive sensing},
  series={Applied and Numerical Harmonic Analysis},
  publisher={Birkh\"auser/Springer, New York},
  date={2013},
  pages={xviii+625},
  isbn={978-0-8176-4947-0},
  isbn={978-0-8176-4948-7},
  review={\MR {3100033}},
  doi={10.1007/978-0-8176-4948-7},
}

\bib{goh-mar-fel:60}{article}{
  author={Gohberg, I. C.},
  author={Markus, A. S.},
  author={Fel{\cprime }dman, I. A.},
  title={Normally solvable operators and ideals associated with them},
  language={Russian, with Moldavian summary},
  journal={Bul. Akad. \v Stiince RSS Moldoven.},
  volume={1960},
  date={1960},
  number={10 (76)},
  pages={51--70},
  issn={0321-169X},
  review={\MR {0218920 (36 \#2004)}},
}

\bib{johnson-lindenstrauss:84}{article}{
  author={Johnson, William B.},
  author={Lindenstrauss, Joram},
  title={Extensions of Lipschitz mappings into a Hilbert space},
  conference={ title={Conference in modern analysis and probability (New Haven, Conn., 1982)}, },
  book={ series={Contemp. Math.}, volume={26}, publisher={Amer. Math. Soc., Providence, RI}, },
  date={1984},
  pages={189--206},
  review={\MR {737400}},
  doi={10.1090/conm/026/737400},
}

\bib{kashin:78}{article}{
  author={Ka{\v {s}}in, B. S.},
  title={The widths of certain finite-dimensional sets and classes of smooth functions},
  language={Russian},
  journal={Izv. Akad. Nauk SSSR Ser. Mat.},
  volume={41},
  date={1977},
  number={2},
  pages={334--351, 478},
  issn={0373-2436},
  review={\MR {0481792}},
}

\bib{maurey-pisier:76}{article}{
  author={Maurey, Bernard},
  author={Pisier, Gilles},
  title={S\'eries de variables al\'eatoires vectorielles ind\'ependantes et propri\'et\'es g\'eom\'etriques des espaces de Banach},
  language={French},
  journal={Studia Math.},
  volume={58},
  date={1976},
  number={1},
  pages={45--90},
  issn={0039-3223},
  review={\MR {0443015}},
}

\bib{milman:70}{article}{
  author={Milman, V. D.},
  title={Operators of class $C_{0}$ and $C\sp *_{0}$},
  language={Russian},
  journal={Teor. Funkci\u \i \ Funkcional. Anal. i Prilo\v zen.},
  number={10},
  date={1970},
  pages={15--26},
  issn={0321-4427},
  review={\MR {0295134 (45 \#4202)}},
}

\bib{pelczynski-rosenthal:75}{article}{
  author={Pe{\l }czy{\'n}ski, A.},
  author={Rosenthal, H. P.},
  title={Localization techniques in $L^{p}$ spaces},
  journal={Studia Math.},
  volume={52},
  date={1974/75},
  pages={263--289},
  issn={0039-3223},
  review={\MR {0361729}},
}

\bib{pietsch:78}{book}{
  author={Pietsch, Albrecht},
  title={Operator ideals},
  series={Mathematische Monographien [Mathematical Monographs]},
  volume={16},
  publisher={VEB Deutscher Verlag der Wissenschaften, Berlin},
  date={1978},
  pages={451},
  review={\MR {519680 (81a:47002)}},
}

\bib{pisier:82}{article}{
  author={Pisier, Gilles},
  title={Holomorphic semigroups and the geometry of Banach spaces},
  journal={Ann. of Math. (2)},
  volume={115},
  date={1982},
  number={2},
  pages={375--392},
  issn={0003-486X},
  review={\MR {647811}},
  doi={10.2307/1971396},
}

\bib{sari-schlump-tomczak-troitsky:07}{article}{
  author={Sari, B.},
  author={Schlumprecht, Th.},
  author={Tomczak-Jaegermann, N.},
  author={Troitsky, V. G.},
  title={On norm closed ideals in $L(l_p,l_q)$},
  journal={Studia Math.},
  volume={179},
  date={2007},
  number={3},
  pages={239--262},
  issn={0039-3223},
  review={\MR {2291733 (2008a:47120)}},
  doi={10.4064/sm179-3-3},
}

\bib{schlump:12}{article}{
  author={Schlumprecht, Th.},
  title={On the closed subideals of $L(\ell _p\oplus \ell _q)$},
  journal={Oper. Matrices},
  volume={6},
  date={2012},
  number={2},
  pages={311--326},
  issn={1846-3886},
  review={\MR {2976049}},
  doi={10.7153/oam-06-22},
}

\bib{sirotkin-wallis:16}{article}{
  author={Sirotkin, Gleb},
  author={Wallis, Ben},
  title={Sequence-singular operators},
  journal={J. Math. Anal. Appl.},
  volume={443},
  date={2016},
  number={2},
  pages={1208--1219},
  issn={0022-247X},
  review={\MR {3514342}},
  doi={10.1016/j.jmaa.2016.06.003},
}

\bib{sz:15}{article}{
  author={Schlumprecht, Thomas},
  author={Zs{\'a}k, Andr{\'a}s},
  title={The algebra of bounded linear operators on $\ell _p\oplus \ell _q$ has infinitely many closed ideals},
  journal={Journal f{\"u}r die reine und angewandte Mathematik (Crelles Journal)},
  date={June 2015},
  issn={(Online) 1435-5345},
  issn={(Print) 0075-4102},
  doi={10.1515/crelle-2015-0021},
}

\bib{volkmann:76}{article}{
  author={Volkmann, Peter},
  title={Operatorenalgebren mit einer endlichen Anzahl von maximalen Idealen},
  language={German},
  journal={Studia Math.},
  volume={55},
  date={1976},
  number={2},
  pages={151--156},
  issn={0039-3223},
  review={\MR {0420299 (54 \#8313)}},
}

\bib{wallis:15}{misc}{
  author={Wallis, Ben},
  title={Closed ideals in $\mathcal L(X)$ and $\mathcal L(X^*)$ when $X$ contains certain copies of $\ell _p$ and $\mathrm {c}_0$},
  date={July 2015},
  status={arXiv:1507.03241 [math.FA]},
}

\end{biblist}
\end{bibdiv}

\end{document}